\newcommand\N{{\mathbb N}}
\newcommand\Z{{\mathbb Z}}
\newcommand\C{{\mathbb C}}
\newtheorem{theorem}{Theorem}[section]
\newtheorem{lemma}[theorem]{Lemma}
\newtheorem{proposition}[theorem]{Proposition}
\newtheorem{definition}[theorem]{Definition}
\newtheorem{example}[theorem]{Example}
\newtheorem{remark}[theorem]{Remark}
\begin{document}
	\title[Quantum invariants from TSD cohomology]{Quantum invariants of framed links from ternary self-distributive cohomology}
	\author{Emanuele Zappala} 
	\address{Institute of Mathematics and Statistics, University of Tartu\\
	Narva mnt 18, 51009 Tartu, Estonia} 
	\email{emanuele.amedeo.zappala@ut.ee \\ zae@usf.edu}
	
	\maketitle
		
			\begin{abstract}
				The ribbon cocycle invariant is defined by means of a partition function using ternary cohomology of self-distributive structures (TSD) and colorings of ribbon diagrams of a framed link, following the same paradigm introduced by Carter, Jelsovsky, Kamada, Langfor and Saito in Transactions of the American Mathematical Society 2003;355(10):3947-89, for the quandle cocycle invariant. In this article we show that the ribbon cocycle invariant is a quantum invariant. We do so by constructing a ribbon category from a TSD set whose twisting and braiding morphisms entail a given TSD $2$-cocycle. Then we show that the quantum invariant naturally associated to this braided category coincides with the cocycle invariant. We generalize this construction to symmetric monoidal categories and provide classes of examples obtained from Hopf monoids and Lie algebras. We further introduce examples from Hopf-Frobenius algebras, objects studied in quantum computing. 
			\end{abstract}
		
		\tableofcontents
		
	\section{Introduction}
	
	Self-distributivity of binary operations is well known to be an algebraic formulation of the Reidemeister move III in knot theory. Sets with self-distributive operations (i.e. shelves) satisfying extra conditions encoding Reidemeister moves I and II have been used, starting in the 1980's, to construct invariants of knots and links. For instance, Joyce and Matveev independently defined what is now known as the {\it fundamental quandle} of a knot \cite{Joy,Mat}, whose construction is given as a presentation where the generators correspond to the arcs of a knot diagram, while the relations formally correspond to the conjugation operation in a group. Shelves satisfying the algebraic Reidemeister move II condition are called racks, while those satisfying also the algebraic counterpart of Reidemeister move I are called quandles. 
	
	More recently, the notion of (co)homology of quandle has been introduced, and a state-sum invariant of links that utilizes quandle cohomology has been constructed in \cite{CJKLS}. The resulting ``cocycle invariant''  is obtined as a sum over all the colorings of a knot diagram, the states, of all the products of Boltzmann weights, determined by quandle  $2$-cocycles. Although computing cocycle invariants introduces a new problem, that of obtaining nontrivial quandle second cohomology classes, it is in general easier to compare two cocycle invariants rather than comparing the fundamental quandle of two knots. 
	
	Moreover, it is known that quandles induce solutions to the set-theoretic Yang-Baxter equation and therefore, upon linearizing the corresponding set-theoretic map, they produce Yang-Baxter operators \cite{Eis}. In fact, given a quandle and a $2$-cocycle $\alpha$, one can construct a Yetter-Drinfel'd module (i.e. a particular instance of a ribbon category) \cite{Gra} and, consequently, one can obtain quantum link invariants associated to the ribbon category following a standard procedure as in \cite{Tur}. It naturally arises the question of whether the two types of invariant are somehow related. A positive answer has been given in \cite{Gra}, where it is shown that the invariants coincide in a suitable sense. 
	
	Ternary self-distributive structures are generalizations of binary shelves to the setting of ternary operations. A suitable diagrammatic interpretation of crossing of ribbons in terms of ternary operations translates the fundamental moves for the isotopy equivalence of framed links into a ternary analogue of rack. A corresponding state-sum invariant that uses cohomology of ternary racks and colorings of ribbon diagrams associated to  framed links is then constructed \cite{EZ} following the same reasoning as in the binary case. This invariant, called {\it ribbon cocycle invarinat}, has been studied for a fundamental class of ternary racks, called group heaps, and it has been seen to detect nontrivial framing of links \cite{SZ}.
	
	On the other hand, group heaps can be generalized to certain structures, named {\it quantum heaps}, that naturally arise from involutory Hopf algebras, i.e. having antipode that squares to the identity map. A corresponding construction for Hopf monoids in symmetric monoidal categories exists \cite{heap}, providing a large class of examples for ternary self-distributive objects in symmetric monoidal categories, in the sense of \cite{ESZ} Section~8. It is therefore possible to develop an analogue of the ternary set-theoretic theory in symmetric monoidal categories.
	
	The scope of this article is that of using ternary self-distributive (TSD) structures and their ternary cocycles to construct ribbon categories whose twisting morphisms are nontrivial, and study the corresponding link invariants. The starting point of this study follows the paradigm that has been used in \cite{Gra} to prove that the cocycle invariants are indeed quantum invariants. We prove, in fact, that set-theoretic TSD structures and a choice of a ternary $2$-cocycle are linearized to obtain a braiding in a suitably constructed symmetric monoidal category. The construction is similar to that of the {\it braid category} \cite{FY,Kas}, but braiding and twisting are induced by the TSD structure following the doubling functorial procedure in \cite{ESZ}, and using TSD cocycles to twist the morphisms obtained.   Analogously to the fact that the ribbon cocycle invariant detects nontrivial framings \cite{SZ}, we obtain that the twisting defined in this category is nontrivial, as opposed to the case of Yetter-Drinfel'd modules associated to (binary) quandle operations. 
	
    On the one hand, there is no strict need of defining a ribbon category out of the data of a TSD and a ternary $2$-cocycle, in the sense that we can obtain a representation of the framed braid group in a similar fashion as in \cite{Tur}, from which a corresponding quantum invariant would naturally arise. 
    On the other hand, though, this construction easily generalises to multiple objects where ``self-distributive'' ternary actions are defined. These produce a more general family of ribbon categories where the twists are obtained by TSD operations as in the previous case, while the braidings are obtained from ternary actions. Moreover, the braiding and twisting morphisms can be deformed by cohomological classes that twist the weights and entail the operations and mutual actions of the underlying structure. Among the examples that we present in this paper, we find mutually distributive structures and their labeled cohomology, whose algebraic properties were studied in \cite{ESZ}, and $G$-families of quandles and their cohomology theory, extensively studied in connection with knotted handlebody invariants \cite{IIJO,Nos}. 
    
    The approach mentioned above, in addition, is particularly suitable to be generalized to the case of TSD objects in symmetric monoidal categories. As observed above, in fact, the notion of heap has a counterpart obtained from involutory Hopf monoids in symmetric monoidal categories, therefore providing a fertile ground for a general theory that associates a ribbon category to a symmetric monoidal category along with a TSD object in it. Using the TSD morphism we obtain, in fact, a Yang-Baxter operator in the tensor product of the TSD object we start with, and use this to define the braiding of the ribbon category. The twist is obtained via the same procuedure by interpreting twists as self-intersections of a ribbon. In other words, Reidemeister move I does not hold when we consider framed links, but it is replaced by a twisting which can be defined using a variation of the braiding.
    
    We have mentioned that we utilize TSD cohomology classes to deform the braiding and twisting in the case of linearized TSD operations. When working in a symmetric monoidal category, we can introduce a categorical version of the $2$-cocycle condition. The setting, here, generalizes the set-theoretic one in two fundamental ways. Recall the set-theoretic $2$-cocycle condition, which reads
    $$
    \psi(x,y,z) - \psi([x,u,v],[y,u,v],[z,u,v]) - \psi(x,u,v) + \psi([x,y,z],u,v) = 0
    $$
    for all $x,y,z,u,v\in X$, where $(X,[-, -, -])$ is a TSD set. Firstly, observe that certain elements appear in more than one term, and therefore are repeated. This is no particular concern when dealing with set-theoretic structures, but in a general symmetric monoidal category, it is required that each instance of a repreated element is replaced by an instance of a comultiplication morphism. In fact, the definition of TSD object, see for instace Section~8 in \cite{ESZ} for $n$-ary case, implements this perspective already, and it is somehow natural to expect that it carries on to the $2$-cocycle condition. Secondly, in the set-theoretic case coefficients of cohomology are taken in a group, and linearization naturally requires the coefficients to be represented in the ground field. In an arbitrary symmetric monoidal category, we interpret this situation as an equality holding in the unit object of the category. The object of coefficients naturally acts on the TSD object allowing the ``cocycle'' to perturb the Yang-Baxter operator associated to the TSD morphism. If one thinks of the group algebra associated to a group as being a Hopf algebra where the comultiplication is simply the splitting of an element in two identical copies, then the categorical interpretation of the $2$-cocycle condition seems to be on the same footing as the $2$-cocycle condition in the ground field of the linearization of a set-theoretic operation. 
    
    In the general situation, one further assumption is necessary, in order to apply the same construction as in the category of vector spaces. Namely, one needs to assume that the category is $\mathbb I$-linear, where $\mathbb I$ is the unit object. Then the $2$-cocycles are assumed to take values in the ground object $\mathbb I$ and, moreover, they are supposed to satisfy a convolution inversion formula, in order to allow the definition of inverses. This is naturally satisfied in the linearized case, since comultiplication is simply diagonal, and coefficients in a group are automatically invertible.
    
    Naturally, as in the set-theoretic case one can obtain ribbon categories from multiple TSD sets having suitable ternary actions and families of ternary $2$-cocycles, we can generalize the previous construction in a symmetric monoidal category where multiple TSD objects along with certain ternary morphisms are defined. An interesting class of examples arises from ternary augmented racks, where the axioms of augmentation can be easily translated from the case of vector spaces and Hopf algebras to that of Hopf monoids in a symmetric monoidal category. 
    
    \subsection{Main results}
    
    We proceed now to concisely summarize the main results of the present article. 
     
     The first result (Theorem~\ref{thm:ribboncat}) is that starting from a TSD set $(X,T)$ and a given ternary $2$-cocycle $\alpha\in Z^2(X,A)$ with coefficients in an abelian group $A$, we construct a ribbon category $\mathcal R^*_\alpha(X)$ whose braidings are constructed out of a Yang-Baxter operator arising from $(X,T)$ and deformed by the cocycle $\alpha$. Moreover, it is shown that the ribbon category is well-defined, up to equivalence of braided categories, with respect to the cohomology class of $\alpha$, in the sense that if $\beta$ represents the same cohomology class, then there exists a braided functor that gives an equivalence of categories between $\mathcal R^*_\alpha(X)$ and $\mathcal R^*_\beta(X)$. Moreover, a similar construction is shown to hold when starting with a family of TSD sets $\{X_i\}_{i\in I}$ along with maps $T_{ij}: X_i\times X_j\times X_j \longrightarrow X_i$ satisfying a generalized version of TSD condition (Theorem~\ref{thm:compatibleribboncat}). In this situation, we define the notion of ternary $2$-cocycles for the family $\{X_i\}_{i\in I}$ and use them to deform the Yang-Baxter operator associated to it. We therefore construct a braiding and a twisting in order to obtain a ribbon category whose families of objects and morphisms are larger than those of $\mathcal R^*_\alpha(X)$. In Theorem~\ref{thm:quantum} we show that the quantum invariants associated to the ribbon category $\mathcal R^*_\alpha(X)$ coincide with the (state-sum) ribbon cocycle invariant in a suitable sense, i.e. when we take a representation of the coefficient abelian group of cohomology in the ground field. This construction is generalized to the setting of TSD objects in symmetric monoidal categories, of which TSD sets in the category of sets are a particular instance. It is shown that in this setup, from a TSD object we obtain a Yang-Baxter operator which is deformed by means of what is hereby called a categorical $2$-cocycle. The new Yang-Baxter operator is used then to construct the braiding in what is shown to be a ribbon category (Theorem~\ref{thm:ribbongeneral}).
    
     \subsection{Organization of the article}
     
     This article is structured as follows. We review some preliminary material in Section~\ref{sec:pre}, where we recall binary and ternary self-distributive structures, the cocycle invariant and some basic notions regarding symmetric monoidal categories. In Section~\ref{sec:ribboncocy} we give a detailed account of the construction of the ribbon invariant of framed links as well as a proof of its being well-posed. In Section~\ref{sec:ribboncat} we show that starting from the data of a TSD set and a ternary $2$-cocycle, there exists a ribbon category determined up to equivalence of categories with respect to cohomology class of the $2$-cocycle. Moreover, it is shown that a similar construct exists starting from a family of TSD structures with some extra compatibility conditions and an analogue of the notion of ternary $2$-cocycle. The corresponding ribbon cateogry has a wider class of objects and morpshims with respect to the previous one. We then proceed to show, in Section~\ref{sec:ribbonquantum}, that the (state-sum) ribbon cocycle invariant coincides with the quantum invariant associated to the ribbon category arising in Section~\ref{sec:ribboncat}. Section~\ref{sec:examples} presents various examples to elucidate the construction in practice. Section~\ref{sec:generalized} is devoted to generalizing the theory developed in the previous sections in the context of symmetric monoidal cateories and TSD objects. The notion of categorical $2$-cocycle condition is introduced in order to deform the braidings obtained from TSD objects, in a fashion that follows the paradigm of Section~\ref{sec:ribboncat}. Quantum invariants associated to this class of ribbon categories are discussed in Section~\ref{sec:quantumgeneral}. Finally, further examples arising from ternary racks are given in the Appendix.
     
	\vspace{1cm}
	\noindent
	
	{\noindent{\bf Acknowledgements.} This research has been funded by the Estonian Research Council under the grant: MOBJD679. The author is grateful to M. Elhamdadi and M. Saito for useful conversations.}
	
	\section{Preliminaries}\label{sec:pre}
	
	In this section we provide preliminary material that is used throughout the article. 
	
	\subsection{Racks, quandles and cocycle invariants}
	\begin{sloppypar}
	Racks are (non-associative) magmas satisfying the self-distributive property given by $(x*y)*z = (x*z)*(y*z)$ for all $x,y,z$, such that the right multiplication maps are bijections. Self-distributivity is an ``algebraization'' of the topological notion of Reidemeister move III, while the requirement that right multiplications be bujective corresponds to imposing Reidemeister move II. Idempotent racks are called quandles, where idempotence corresponds to the remaining Reidemeister move I. It is well known that knot and link isotopy classes in $\mathbb R^3$, or $\mathbb S^3$, can be characterized combinatorially via their diagrams, i.e. projections on the plane satisfying certain regularity properties, and Reidemeister moves I, II and III. Consequently, quandles have been used in \cite{CJKLS} to construct state-sum invariants of links, named cocycle invariants. Fundamental roles in the definition and validity of the cocycle invariant are played by the notion of quandle coloring of a knot/link diagram, and a cohomology theory associated to racks and quandles. In fact, loosely speaking, the invariant is defined by considering all possible colorings of a fixed given diagram of a link $\mathcal L$, and multiplying the weights of each crossing of the diagram, each of which defined by applying a pre-determined $2$-cocycle to the colors meeting at the crossing. When applying any of the Reidemeister moves to pass from one diagram of $\mathcal L$ to the other, i.e. when performing an isotopy on $\mathcal L$, the colors of the diagrams correspond bijectively by virtue of the axioms defining a quandle, and the weights remain unchanged because of the definition of quandle cohomology. 
	\end{sloppypar}
	
	We proceed to briefly review the notion of quandle coloring of a link diagram, and the definition of cohomology associated to a quandle $Q$. A reference for both definitions is the article \cite{CJKLS}, where the cohomology utilizes abelian coefficients, while the case with non-abelian coefficients is treated in \cite{CEGS}. Let $\mathcal L$ be an oriented link, let $\mathcal D$ indicate an oriented diagram of $\mathcal L$, and let $Q$ be a quandle, with operation denoted by the symbol $*$. A coloring of $\mathcal D$ by $Q$ is a map $\mathcal C: R \longrightarrow Q$, where $R$ denotes the set of arcs of the diagram $\mathcal D$, satisfying the conditions given in Figure~\ref{fig:qcoloring}, for positive and negative crossings. 
	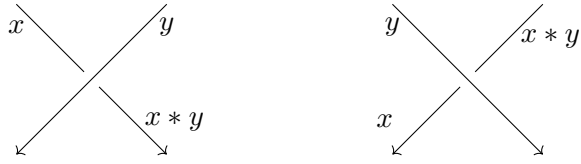
\begin{figure}
		\begin{center}
			\begin{tikzpicture}
			\draw[->] (2,2) -- (0,0);
			\draw (0,2)--(0.9,1.1);
			\draw[->] (1.1,0.9) -- (2,0);
			
			\draw[->] (5,2)--(7,0); 
			\draw (7,2)--(6.1,1.1);
			\draw[->] (5.9,0.9)--(5,0);
		    
		    \node at (0,1.7) {$x$};
		    \node  at (2.1,0.45) {$x*y$};
		    \node at (2,1.7) {$y$};
		    
		    \node  at (4.9,0.45) {$x$};
		     \node  at  (5,1.7) {$y$};
		     \node at (7.1,1.55) {$x*y$};
			\end{tikzpicture}
		\end{center}
	\caption{Coloring condition for positive crossings (left) and negative crossings (right).}
	\label{fig:qcoloring}
	\end{figure}
	
	Let $Q$ be a quandle and define chain groups $C_n(Q)$ to be the free abelian group generated by the elements of $Q^n$ for each $n$. Then, we define the $n^{\rm th}$-differential $\partial_n$ on generators according to the assignment
	\begin{eqnarray*}
	\lefteqn{\partial_n (x_1,\ldots , x_n)}\\
	&=& \sum_{i=2}^n (-1)^n[(x_1,\ldots ,x_{i-1},\hat x_i, x_{i+1}, \dots , x_n)\\
	&& \hspace{0.5cm} - (x_1*x_i, \ldots, x_{i-1}*x_i, \hat x_i,x_{i+1}, \ldots , x_n)]
	\end{eqnarray*}
where we have used $\hat{}$ to indicate omission of an element. Observe that the first term in the sum is the ``usual'' simplicial term, while the second term contains the information associated to the operation $*$, determining the quandle structure. One proves directly that the maps $\partial_n$ satisfy the pre-simplicial conditions and it follows automatically that $\partial_{n-1}\circ\partial_n = 0$, from which we have a well defined chain complex and an associated homology theory called {\it rack homology}. {\it Quandle homology} is obtained by quotienting out the sub-complex $C^{\rm q}_n(Q)$ generated as a free abelian group by $n$-tuples of $Q^n$ where $x_i = x_{i+1}$ for some $i$. In fact, it is the indempotency condition that induces well defined maps $\partial_n$ when restricting on the subgroups $C^{\rm q}_n(Q)$. Taking $A$ to be an abelian group and dualizing the rack and quandle chain complexes, one obtains associated cohomology theories which we denote by $H^n(Q;A)$ and $H^n_{\rm q}(Q;A)$, respectively. A representative $\phi$ of a second cohomology class $[\phi]\in H^2_{\rm q}(Q;A)$ satisfies the $2$-cocycle condition, which takes the form
$$
\phi(x,y) - \phi(x*z,y*z) - \phi(x,z) + \phi(x*y,z) = 0,
$$
for all $x,y,z\in Q$. The $2$-cocycle condition is related via a diagrammatic interpretation to Reidemeister move III, as depicted in Figure~1 in \cite{CJKLS}, while the $3$-cocycle condition, which we do not explicitly consider herein, is related to the tetrahedron move and shadow colorings. 

Fix now a coloring $\mathcal C$ by a quandle $Q$, defining at each crossing $\tau_i$ of Figure~\ref{fig:qcoloring} a Boltzmann sum, $\mathcal B(\tau_i,\mathcal C)$, as $\psi(x,y)$, for positive crossing (left panel), and $\psi(x,y)^{-1}$ for negative crossing (right panel), where $\psi\in Z^2(Q,A)$ is a quandle $2$-cocycle, one defines the state-sum (or partition function) 
$$
\sum_{\mathcal C} \prod_i \mathcal B(\tau_i,\mathcal C)
$$ 
for any given diagram of a knot, and where the sum runs over all the colorings $\mathcal C$ of the fixed diagram, and the product runs over all the crossings. This state-sum, called {\it cocycle invariant}, is shown to be an invariant of knots in \cite{CJKLS}, where it has been firstly introduced. When dealing with a link, one proceeds analogously for each component and defines an invariant that is a vector with as many entries as the components of the given link. 


\subsection{Framed links and their diagrams}
Framed links are embeddings of finitely many copies of $\mathbb S^1\times D^2$, i.e. solid tori, in the three dimensional space $\mathbb R^3$, or its compactification $\mathbb S^3$. Alternatively, framed links can be defined as links along with a choice of a section of their normal bundle. Diagrammatically, a frmed link $\mathcal L$ is represented by a link diagram whose arcs are thickened to be ribbons. This thickening is obtained in a standard way by doubling each arc so to obtain a second copy of the link diagram, parallel to the first one. The one lets the second copy mimick the over/under passing information of the first diagram. Such a thickened diagram is called {\it blackboard framing}. A crossing of a diagram whose arcs have been thickened into a ribbon is represented in Figure~\ref{fig:ribboncrossing}. From the figure is clear that the coloring paradigm corresponding to that of quandles changes. We can think of each crossing as two arcs, each of them underpassing two arcs. The coloring rule is suitably defined by means of ternary racks. This concept, introduced to the author by M. Saito, is formalized in Section~\ref{sec:ribboncocy}, where it is also given a construction of the ribbon cocycle invariant. 

\begin{figure}
	\begin{center}
			\begin{tikzpicture}
		\draw (0,3) -- (3,0);
		\draw (0.3,3) -- (3.3,0);
		
		\draw (3,3) -- (1.8,1.8);
		\draw (1.3,1.3) -- (0,0);
		
		\draw (3.3,3) -- (2,1.7);
		\draw (1.5,1.2) -- (0.3,0);
		
		\draw (0,3) -- (0.3,3);
		\draw (3,3) -- (3.3,3);
		\draw (0,0) -- (0.3,0);
		\draw (3,0) -- (3.3,0);
		
		\end{tikzpicture}
	\end{center}
\caption{Crossing of a blackboard framing of a framed link.}
\label{fig:ribboncrossing}
\end{figure}
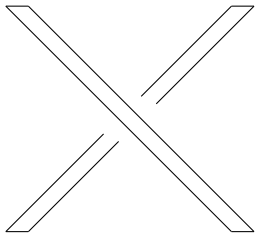

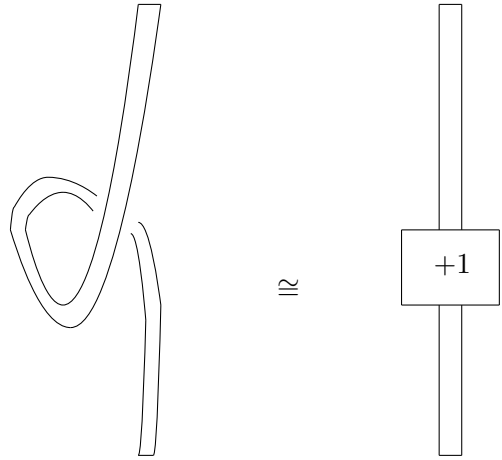
\begin{figure}
	\begin{center}
		\begin{tikzpicture}
		\draw (1,0) parabola (2,4);
		\draw (1.1,-0.3) parabola (2.3,4);	
		\draw (1,0) parabola (0.5, 1);
		\draw (0.55,1.2) parabola (0.5,1);
		\draw (1,1.5) parabola (0.55,1.2);
		\draw (1,1.5) parabola (1.4,1.25);
		\draw (1.1,-0.3) parabola (0.3, 1);
		\draw (0.35,1.3) parabola (0.3,1);
		\draw (0.8,1.7) parabola (0.35,1.3);
		\draw  (0.8,1.7) parabola (1.45,1.45);
		\draw (1.9,0.95) parabola (2.1,-0.2);
		\draw (2,-2) parabola (2.1,-0.2);
		\draw (2,1.1) parabola (2.3,0);
		\draw (2.2,-2) parabola (2.3,0);
		\node (a) at (3:4) {$\cong$};
		
		\draw (6,4) -- (6,1);
		\draw (6.3,4) -- (6.3,1);
		
		
		\draw (5.5,1) -- (6.8,1);
		\draw (5.5,1) -- (5.5,0);
		\draw (6.8,1) -- (6.8,0);
		\draw (5.5,0) -- (6.8,0);
		
		
		\draw (6,0) -- (6,-2);
		\draw (6.3,0) -- (6.3,-2);
		
		\node (a)  at (5:6.2) {$+1$};
		
		\draw (2,4)--(2.3,4);
		\draw (2,-2)--(2.2,-2);
		
		\draw (6,-2)--(6.3,-2);
		\draw (6,4)--(6.3,4);;
		\end{tikzpicture}
	\end{center}
	\caption{Self-crossing of a ribbon introduces twists.}
		\label{fig:selfcrossing}
\end{figure} 

Reidemeister moves (R moves for short) of type II and III translate directly into analogously defined moves where each arc is thickened to a ribbon, while R move I does not hold in the context of framed links, since it introduces a twist, i.e. a change in the framing. This is depicted in Figure~\ref{fig:selfcrossing}. Throughout this article we will depict positive, resp. negative, twists by a rectangle inserted in a ribbon with a positive, resp. negative, integer indicating the number of twists and their orientations. Isotopy equivalence of framed links is characterized by moves RII, RIII and the cancellation of twists depicted in Figure~\ref{fig:twistcancellation}, where each twist is thought of as a self-crossing as in Figure~\ref{fig:selfcrossing} (with negative twists obtained by kinks in the opposite direction).

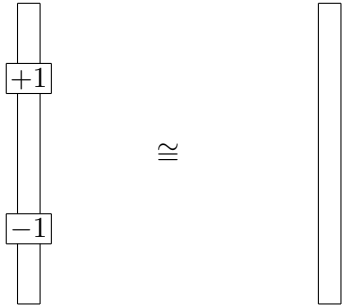
\begin{figure}
	\begin{center}
		\begin{tikzpicture}
		\draw (0,4) -- (0,3.2);
		\draw (0,2.8) -- (0,1.2);
		\draw (0,0.8) -- (0,0);
		
		\draw (0.3,4) -- (0.3,3.2);
		\draw (0.3,2.8) -- (0.3,1.2);
		\draw (0.3,0.8) -- (0.3,0);
		
		\draw (-0.15,3.2) -- (0.45,3.2);
		\draw (-0.15,2.8) -- (0.45,2.8);
		\draw (-0.15,1.2) -- (0.45,1.2); 
		\draw (-0.15,0.8) -- (0.45,0.8);
		\draw (-0.15,0.8) -- (-0.15,1.2);
		\draw (-0.15,2.8) -- (-0.15,3.2);
		\draw (0.45,0.8) -- (0.45,1.2);
		\draw (0.45,2.8) -- (0.45,3.2);
		
		\node (a) at (0.15,1) {$-1$};
		\node (a) at (0.15,3) {$+1$};
		
		\node (a) at (2,2) {$\cong$};
		
		\draw (4,4) -- (4,0);
		\draw (4.3,4) -- (4.3,0);
		
		\draw (4,4)--(4.3,4);
		\draw (4,0)--(4.3,0);
		
		\draw (0,4)--(0.3,4);;
		\draw (0,0)--(0.3,0);
		\end{tikzpicture}
	\end{center}
	\caption{Twists with opposite signs annihilate each other.}
	\label{fig:twistcancellation}
\end{figure}

\subsection{Ternary racks and their self-distributive cohomology}

Ternary racks are generalizations of racks to sets with ternary operations. Specifically, a set $X$ endowed with a ternary operation $T: X\times X\times X \longrightarrow X$ satisfying the conditions 
$$
T(T(x,y,z),u,v) = T(T(x,u,v),T(y,u,v),T(z,u,v)), 
$$
is said to be a ternary self-distributive (TSD) set. A TSD set such that the map $X \longrightarrow X$ defined by $T(\bullet, x,y)$ is a bijection for all $x,y\in X$ is said to be a ternary rack. A notable example of TSD structure is the heap of a group, defined as $(x,y,z) \mapsto xy^{-1}z$. Heap operations have been considered and studied in \cite{SZ}, in relation with their ribbon cocycle invariants (see Section~\ref{sec:ribboncocy} below). TSD operations naturally arise also by composing binary self-distributive operations. For instance, if $(Q,*)$ is a rack, or quandle, then the operation $T(x,y,z) := (x*y)*z$ can be seen to endow $Q$ with a TSD structure \cite{ESZ}.

We recall the notion of TSD (co)homology of ternary racks \cite{Green} and, more specifically, the TSD $2$-cocycle condition, since the ribbon cocycle invariant utilizes $2$-cocycles as weights, in a fashion similar to the original cocycle invariant introduced in \cite{CJKLS}. Let $(X,T)$ be a TSD set, and define $C_n(X)$ to be the free abelian group generated by $(2n+1)$-tuples of elements of $X$. Define maps $\partial_n : C_n(X) \longrightarrow C_{n-1}(X)$ by 
\begin{eqnarray*}
	\lefteqn{\partial_n (x_1, \ldots , x_{2n+1})} \\
	&& \sum_{i=1}^{n} (-1)^i[(x_1,\ldots, x_{2i-1}, \widehat{x_{2i}, x_{2i+1}},x_{2i+2}, \ldots , x_{2n+1})\\
	&& - (T(x_1,x_{2i},x_{2i+1}), \ldots , T(x_{2i-1},x_{2i},x_{2i+1}), \widehat{x_{2i} x_{2i+1}},x_{2i+2}, \ldots, x_{2n+1})],
	\end{eqnarray*}
and extended by $\Z$-linearity. A (long) direct computation shows that the maps $\partial_n$ are obtained as the alternating sum $\partial_n = \sum_{i=1}^n(-1)^i\partial_n^i$, where the maps $\partial_n^i$ satisfy the usual pre-simplicial module axioms and, consequently, $(C_n(X),\partial_n)$ defines a chain complex whose associated homology, written $H_n(X)$, is called TSD homology. Given an abelian group $A$ we obtain, upon dualizing the previous chain complex, TSD cochain groups and associated cohomology. We indicate these groups with the symbols $C^n(X;A)$ and $H^n(X;A)$, respectively. The $2$-cocycle condition, for a $2$-cochain $\psi: X^3\longrightarrow A$, takes the form 
\begin{eqnarray*}
\lefteqn{\psi(x,y,z) - \psi(T(x,u,v),T(y,u,v),T(z,u,v))}\\ 
 &=& \psi(x,u,v) - \psi(T(x,y,z),u,v) 
\end{eqnarray*}
for all $x,y,z,u,v\in X$. As it will be seen in Section~\ref{sec:ribboncocy}, the ternary $2$-cocycle condition, along with an appropriate interpretation of colorings of blackboard framings by ternary racks, is invariant under moves RII, RIII and cancellation move. It is therefore possible to define Boltzmann weights by means of ternary $2$-cocycles and introduce a state-sum invariant of framed links. Given a $1$-cochain $f:X\longrightarrow A$, the first cohomology differential $\delta^1$ maps it to the function 
$$
(x,y,z) \mapsto \delta^1 f(x,y,z) := f(x) - f(T(x,y,z)).
$$
Therefore two $2$-cocycles $\psi$ and $\phi$ are in the same second cohomology class if they differ by a term $\delta^1 f$ as above, for some $1$-cochain $f$. As it will be proved in Section~\ref{sec:ribboncocy}, changing the representative of a second cohomology class changes the ribbon cocycle invariant by a well understood term, so that the invariant is a well defined function, up to a known equivalence relation, of the cohomology group $H^2(X;A)$. This observation did not appear in the original construction in \cite{EZ}, and has been proven when $T$ is the heap operation in \cite{SZ}. 

\subsection{Braided monoidal categories and ribbon categories}

Recall that given a monoidal category $(\mathcal C, \otimes)$, a {\it braiding} in $\mathcal C$ is a natural family of isomorphisms $c_{X,Y}: X\otimes Y \longrightarrow Y\otimes X$ such that the {\it Hexagon Axiom} is satisfied \cite{Kas}, Chapter XIII. Specifically, it is required that the diagram 
\begin{center}
	\begin{tikzcd}
		&X\otimes (Y\otimes Z)\arrow[r,"c_{X,Y\otimes Z}"] &  (Y\otimes Z)\otimes X\arrow[rd,"\alpha_{Y,Z,X}"]& \\
		(X\otimes Y)\otimes Z\arrow[ur,"\alpha_{X,Y,Z}"]\arrow[dr,"c_{X,Y}\otimes \mathbbm 1"]& &  & Y\otimes (Z\otimes X)\\
		& (Y\otimes X)\otimes Z\arrow[r,"\alpha_{Y,X,Z}"] & Y\otimes (X\otimes Z)\arrow[ru,"\mathbbm 1\otimes c_{X,Z}"]&
		\end{tikzcd}
\end{center}
where $\alpha$ indicates the associativity constraint of the category $\mathcal C$, and we have omitted the subscript of the identity morphism, as no confusion arises. A similar diagram for the inverse of $c_{X,Y}$ is required to commute, but this can be obtained from the commutativity of the previous one. Therefore, it is not an independent axiom, see comment in \cite{Kas} right above Definition~XIII.1.1. A monoidal category endowed with a braiding is said to be a {\it braided monoidal category}. Observe that for any object $X\in \mathcal C$, the braiding $c_{X,X}$ is a solution to the braid (Yang-Baxter) equation. In fact, the diagrammatic interpretation coincides with the RIII move for knot/link diagrams. 

In this article we will consider our monoidal categories to be strict, so that the associativity constraints will not be written now on. This assumption is not particularly restrictive, as any monoidal category can be seen to be equivalent to a struct monoidal category. 

Typical examples of braided monoidal categories arise from braided bialgebras (see Chapter~VIII in \cite{Kas}), where the category of $H$-modules, of a braided bialgebra $H$, has a braided structure associated to the universal $R$-matrix of $H$. Another important class of examples arises from crossed $G$-sets, where the braiding is obtained using the crossed action. Linearizing these structures produces bradings in some  subcategory of vector spaces. More generally, one can use a quandle operation, which generalizes the axioms of crossed $G$-set. 

 A left dual of $X$ in a braided monoidal category is an object $X^*$ such that there exist morphisms $coev: \mathbb I \longrightarrow X\otimes X^*$ and $ev: X^*\otimes X \longrightarrow \mathbb I$ such that the equalities
 $(\mathbbm 1\otimes ev)\circ (coev\otimes \mathbbm 1) = \mathbbm 1$ and $(coev \otimes \mathbbm 1)\circ (\mathbbm 1 \otimes ev) = \mathbbm 1$ hold. A similar definition for right duals can be made. A category such that left and right duals exist for all objects is said to be {\it autonomous}, and in this case left and right duals coincide. In what follows, we will refer to left duality simply as ``duality'', if not otherwise specified. 
 
 In a braided monoidal category, the notion of dual introduces a diagrammatic interpretation with different types of crossing orientations. The corresponding RIII moves with new orientations are seen to be induced by the original diagrammatic RIII, as in Figure~10 in \cite{Tur}. See \cite{Kas}, Chapter~XIV, for the diagrammatic interpretation of duality. 

Given a braided monoidal category with duals, a {\it twist} is a natural family of isomorphisms $\theta_X$ such that $\theta_{X\otimes Y} = (\theta_X\otimes \theta_Y)\circ c_{Y,X}\circ c_{X,Y}$. Moreover, $\theta$ is required to behave well with respect to dual objects, in the sense that $\theta_{X^*} = (\theta_X)^*$. 

	In the previous example of braided monoidal categories arising from linearization of crossed $G$-sets, linear duals, and evaluation and coevaluation maps in vector spaces give a ribbon cateogry structure along with trivial twists. In fact, triviality of twists can be interpreted as a consequence of the fact that quandles are idempotent following Figure~\ref{fig:selfcrossing}. A twist is introduced by means of a self-crossing, and the corresponding effect of applying a quandle operation is trivial, due to idempotence. As it will be seen below, using ternary operations and their diagrammatic interpretation gives rise to ribbon categories, following a simialar paradigm, whose twisting morphisms are nontrivial. Consequently the corresponding invariants detect the framing of knot/link.  

	\section{Ribbon cocycle invariants}\label{sec:ribboncocy}
	Following \cite{CJKLS,CEGS}, it is introduced in this section an invariant of framed links, using colorings of ribbon diagrams by ternary quandles, and ternary quandle $2$-cocycles. This invariant was originally introduced in \cite{EZ}, and studied in \cite{SZ} in the case of heap invariants. We give the details of the construction, as they are relevant for the rest of the article. 
	
	Framed links are represented in the rest of the paper by their {\it blackboard framing}. Therefore the arcs of a projection on the plane are represented by ribbons bounded by two parallel arcs. Orientetions of the ribbons are specified by orientations of the parallel arcs, which will be always assumed to be concordant. The framing of a ribbon, which is an integer number, is obtained by twisting the two arcs delimiting the ribbon. This is given by consecutive self-intersections, and therefore a specified orientation of the ribbon determines whether $n$ consecutive twists are positive or negative. A diagram whose edges are specified by two parallel arcs, therefore defining a ribbon, is called {\it ribbon diagram}. It follows from the definitions that the blackboard framing of a framed link is a ribbon diagram. 
	
	Let $X$ be a tarnary quandle and let $\mathcal{D}$ be a diagram of a framed link. Suppose for the moment that the link has a single component, in other words it is a diagram of a framed knot. To each ribbon arc in $\mathcal{D}$, associate a color by a pair of elements $(x_1,x_2) \in X\times X$, corresponding to each side of the ribbon. At a positive crossing $\tau$ of $\mathcal{D}$, where the arcs colored by $(x_1^{\tau},x_2^{\tau})$ and $(y_1^{\tau},y_2^{\tau})$ meet, let the overpassing ribbon mantain the same color, while change the color of the underpassing ribbon to $(T(x_1,y_1,y_2),T(x_2,y_1,y_2))$. When a crossing $\tau$ is negative, we change the color of the underpassing ribbon to $(z_1,z_2)$, where $z_i$ is the unique element of $X$ such that $T(y_i,x_1,x_2) = z_i$, whose existence is guaranteed by the axioms of ternary quandle. We now pose the following.
	
	\begin{definition}\label{def:colorings}
		{\rm
		Let $\mathcal{D}$ be a ribbon diagram whose set of ribbon arcs is denoted by $\mathcal R$, and let $X$ be a ternary quandle. Then, a coloring of $\mathcal D$ by $X$, is a (set-theoretic) map 
		$$\mathcal C: \mathcal R \longrightarrow X\times X,$$ that is consistent with the coloring rule above.
		The set of colorings of a framed link is defined to be the set of colorings of a ribbon diagram of the link.
	}
	\end{definition}

\begin{lemma}\label{lem:coloring}
	Let $X$ be a ternary quandle and $\mathcal D$ a ribbon diagram of a framed link. Then the set of colorings $\mathcal C$ of $\mathcal D$ by $X$ is invariant under Reidemeister moves II and III, and moreover it respects cancelling of kinks.
\end{lemma}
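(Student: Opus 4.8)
The plan is to prove invariance of the set of colorings by exhibiting, for each of the three moves, an explicit bijection between the colorings of the diagram before the move and those after it. Since any coloring is locally determined and the ribbon arcs lying outside the small disk in which the move is performed are left untouched, it suffices to check that the coloring rule forces a consistent bijective correspondence between the colors assigned to the ribbons entering and leaving that disk. In each case I would fix the colors on the incoming arcs (on the boundary of the disk), verify that the colors propagated to the outgoing arcs agree on the two sides of the move, and note that because the exterior arcs are shared, this matching produces the desired bijection of color sets.

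For the move RII, I would observe that the two crossings created (or destroyed) are of opposite sign and share the same overpassing ribbon, whose color, say $(a_1,a_2)$, is unchanged throughout. A ribbon entering with color $(x_1,x_2)$ is therefore transformed at the first (positive) crossing into $(T(x_1,a_1,a_2),T(x_2,a_1,a_2))$ and at the second (negative) crossing by the inverse operation $T(\bullet,a_1,a_2)^{-1}$, whose existence and uniqueness is exactly the ternary rack (bijectivity) axiom. Since $T(\bullet,a_1,a_2)^{-1}$ composed with $T(\bullet,a_1,a_2)$ is the identity on each coordinate, the net effect is the identity, so the colors on the outgoing arcs coincide with those before the move.

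The move RIII is the crux of the argument and the main obstacle, since here the bookkeeping of which of the three ribbons underpasses which must be carried out with care, and each crossing of the blackboard framing is itself doubled. I would label the three incoming ribbons by pairs, propagate their colors through the three crossings in each of the two standard diagrams of RIII, and compare the three outgoing colors. Because the ternary operation is applied coordinatewise, the verification reduces to the single-variable identities, and the equality of the two sides is precisely the TSD self-distributivity condition $T(T(x,y,z),u,v) = T(T(x,u,v),T(y,u,v),T(z,u,v))$ applied to each coordinate. Confirming that the term-by-term matching of the propagated colors is exactly this identity is the technical heart of the lemma.

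Finally, for the cancellation of kinks I would interpret each twist as a self-crossing, as in Figure~\ref{fig:selfcrossing}, so that a positive twist acts on the color $(x_1,x_2)$ of the ribbon by an invertible transformation built from $T$ using the ribbon's own two arc-colors, while a negative twist acts by the inverse transformation. The configuration of Figure~\ref{fig:twistcancellation}, a $+1$ twist immediately followed by a $-1$ twist, then composes a transformation with its inverse, again yielding the identity on colors by the ternary rack axiom. Assembling these three local bijections, each compatible with the untouched exterior arcs, gives the invariance of the full set of colorings.
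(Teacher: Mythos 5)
Your handling of RII and RIII is essentially correct: at an RII pair the two crossings share the same overpassing ribbon, whose color pair $(a_1,a_2)$ is never altered, so the underpassing color is acted on by $T(\bullet,a_1,a_2)$ and then by its inverse, and the RIII comparison does reduce coordinatewise to the TSD identity $T(T(x,y,z),u,v)=T(T(x,u,v),T(y,u,v),T(z,u,v))$. (The paper itself states this lemma without proof, so there is nothing to compare against there; the substance lives in the weight-cancellation arguments of Theorem~\ref{th:inv} and Theorem~\ref{thm:ribboncat}.) The genuine gap is in your kink-cancellation step, where you claim that a $+1$ twist followed by a $-1$ twist ``composes a transformation with its inverse, again yielding the identity on colors by the ternary rack axiom.'' Unlike in RII, the two self-crossings do \emph{not} use the same over-colors: after the positive kink the ribbon's color has changed from $(x_1,x_2)$ to $(x_1',x_2')=(T(x_1,x_1,x_2),T(x_2,x_1,x_2))$, and the negative kink inverts the map $T(\bullet,x_1',x_2')$, built from the \emph{new} colors, not the map $T(\bullet,x_1,x_2)$ that was just applied. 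The rack (bijectivity) axiom alone does not make these two maps mutually inverse, so your composition argument does not close.

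What actually makes the kinks cancel is the identity $T(\bullet,x,y)=T(\bullet,T(x,x,y),T(y,x,y))$, which needs self-distributivity: applying TSD to $T(T(z,x,y),x,y)$ gives
\begin{equation*}
T(T(z,x,y),x,y)=T(T(z,x,y),T(x,x,y),T(y,x,y)),
\end{equation*}
and since $T(\bullet,x,y)$ is surjective every element is of the form $T(z,x,y)$, so the two right-translation maps agree. Granting this, $T(x_i,x_1',x_2')=T(x_i,x_1,x_2)=x_i'$, hence inverting $T(\bullet,x_1',x_2')$ on $(x_1',x_2')$ returns exactly $(x_1,x_2)$, and your bijection goes through; depending on the local geometry of the two kinks (which passage is over first) one also needs this identity to show that the implicit coloring equation at a kink has a unique solution. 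This is not a pedantic point: it is precisely the computation the paper performs inside the proof of Theorem~\ref{thm:ribboncat} (the chain of equalities culminating in $T(x,z,w)=T(x,T(z,z,w),T(w,z,w))$, motivated by ``sliding a ribbon beneath a self crossing''), and it is where self-distributivity, rather than mere invertibility, enters the framed-link setting. Amend the kink step to invoke TSD explicitly and the proof is complete.
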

As a consequence, the notion of coloring of a framed link is well posed, and the set of colorings of a framed link is an isotopy invariant. 

Suppose $\phi$ is a ternary quandle $2$-cocycle of $X$, with coefficients in $A$. For a given crossing $\tau$, define the Boltzmann weight at $\tau$, depending on the coloring $\mathcal{C}$ and the $2$-cocycle $\phi$ by $ (\phi (x^{\tau}_1,y^{\tau}_1,y^{\tau}_2)^{\epsilon(\tau)},\phi(x^{\tau}_2,y^{\tau}_1,y^{\tau}_2)^{\epsilon(\tau)})\in A\times A$, where the sign $\epsilon(\tau)$ is that of the crossing. Now we can define the $2$-cocycle invariant of a link $L$ with respect to a ternary quandle $X$ and a ternary $2$-cocycle $\psi\in Z^2(X,A)$ as follows. First we give the definition in the case of framed knots, and then generalize it to framed links. 

\begin{definition}\label{def:inv}
	{\rm
Let $\mathcal{D}$ be a ribbon diagram of a framed knot $K$ having $k$ crossings $\tau_1,\cdots ,\tau_k$, and let $\psi\in Z^2(X,A)$ be a ternary $2$-cocycle. Define the cocycle invariant of $K$, by the $2$-cocycle $\psi$ as
$$
\Theta_{\mathcal D} (X,T,\psi) =\sum_{\mathcal C}   (\prod_{i=1}^{k}\psi (x^{\tau_i}_1,y^{\tau_i}_1,y^{\tau_i}_2)^{\epsilon(\tau_i)},\prod_{i=1}^{k}\psi(x^{\tau_i}_2,y^{\tau_i}_1,y^{\tau_i}_2)^{\epsilon(\tau_i)}),
$$	
where $\epsilon(\tau_i)$ is the sign of the $i^{th}$ crossing, and the sum runs over all colorings $\mathcal C$ of the diagram $D$.  We sometimes shorten notation by writing $\Theta_\psi$ instead of $\Theta_{\mathcal D} (X,T,\psi)$, and name $\Theta_{\mathcal D} (X,T,\psi)$ the ribbon cocycle invariant. Each term $\psi (x^{\tau_i}_j,y^{\tau_i}_1,y^{\tau_i}_2)^{\epsilon(\tau_i)}$, $j = 1,2$, is also called the {\it Boltzmann weight} at $\tau$, relative to the coloring $\mathcal C$ and the $2$-cocycle $\psi$. It is denoted by the symbol $\mathcal B_j(\mathcal C,\tau)$.
}
\end{definition}

It remains to prove that the $2$-cocycle invariant does not depend on the choice of ribbon diagram  $\mathcal D$ used in Definition~\ref{def:inv}.  
\begin{theorem}\label{th:inv}
	The cocycle invariant does not depend on the equivalence class of the ribbon diagram $\mathcal{D}$. Therefore, it is well defined and it is an invariant of framed knots. Moreover, changing a $2$-cocycle $\psi$ to another representative of the same second cohomology group $[\psi]\in H^2(X,A)$, changes the invariant to an integer multiple of the unit $(e,e) \in A\times A$.
\end{theorem}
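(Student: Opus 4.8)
The plan is to establish the two assertions separately. The first is that the state-sum $\Theta_{\mathcal D}(X,T,\psi)$ is unchanged under the three moves that generate framed isotopy, namely RII, RIII and the cancellation of opposite twists; this promotes Lemma~\ref{lem:coloring} from the level of colorings to the level of Boltzmann weights. The second is to compute how $\Theta$ responds to replacing $\psi$ by a cohomologous cocycle, and to see that the answer is an integer multiple of the unit $(e,e)$.

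For diagram independence I would treat one move at a time. In each case Lemma~\ref{lem:coloring} supplies a bijection between the colorings of the two diagrams, so it suffices to compare the products of Boltzmann weights $\mathcal B_j(\mathcal C,\tau_i)$ componentwise in $j=1,2$. For RII the two crossings that are created or destroyed carry opposite signs $\epsilon(\tau)$ and, by the inverse rule relating positive and negative crossings, identical cocycle arguments; their weights are therefore mutually inverse in $A$ and cancel. For RIII the three crossings on each side contribute three weights, and equality of the two ordered products in each component amounts exactly to the TSD $2$-cocycle condition
\[
\psi(x,y,z) - \psi(T(x,u,v),T(y,u,v),T(z,u,v)) = \psi(x,u,v) - \psi(T(x,y,z),u,v),
\]
rewritten multiplicatively using that $A$ is abelian. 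For the twist cancellation of Figure~\ref{fig:twistcancellation} the two self-crossings have opposite sign and, after the intermediate color change, the same cocycle arguments, so their weights cancel as well. Summing over the bijection of colorings yields $\Theta_{\mathcal D}=\Theta_{\mathcal D'}$, which gives diagram independence and hence invariance under framed isotopy.

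For the dependence on the representative, write $\phi=\psi+\delta^1 f$ with $f\colon X\to A$, so that $\phi(x,y,z)=\psi(x,y,z)+f(x)-f(T(x,y,z))$. Fixing a coloring $\mathcal C$ and a component $j$, the $\phi$-weight differs from the $\psi$-weight by the additive correction $\sum_i \epsilon(\tau_i)\,\delta^1 f\bigl(x_j^{\tau_i},y_1^{\tau_i},y_2^{\tau_i}\bigr)$. The point I would exploit is that, because of the coloring rule and of the inverse rule at negative crossings, each summand equals $f$ evaluated on the incoming $j$-th color of the underpassing ribbon minus $f$ evaluated on its outgoing $j$-th color, regardless of the sign of the crossing. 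Such jumps occur exactly where the $j$-th boundary arc of the ribbon passes under, the color being locally constant in between, so the sum telescopes along that boundary arc. Since each boundary component of the ribbon is a closed circle and $\mathcal C$ is globally consistent, the telescoping collapses around each circle: the honest crossings cancel in pairs $f(c)-f(c)$, and the only terms that can survive are attached to the self-crossings coming from the framing. Tracking these residual terms through the closing-up constraints that the twists impose on admissible colorings shows the net correction to each weight-pair to be supported at the unit, so that $\Theta_\phi$ and $\Theta_\psi$ differ by an integer multiple of $(e,e)$ in the group ring $\mathbb{Z}[A\times A]$.

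The step I expect to be the main obstacle is precisely this bookkeeping at the self-crossings produced by the framing. In the binary quandle case idempotence trivializes Reidemeister I, so twists never enter; here a twist transforms a ribbon by its own color through $T$, and one must verify that the coboundary jump $f(x_j)-f(T(x_j,x_1,x_2))$ is exactly the increment needed to keep the telescoping along the boundary circle intact, in particular that the first and second slots stay coherently assigned to the two boundary components as they cross in the diagram. Checking that the constraints forced on admissible colorings by the twists make these contributions collapse onto the unit, together with a careful sign analysis at negative crossings so that the before/after cancellation is genuinely sign-independent, is the delicate part of the argument.
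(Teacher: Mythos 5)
Your proposal follows essentially the same route as the paper: invariance is checked move by move (the two RII crossings contribute mutually inverse weights, RIII is exactly the ternary $2$-cocycle condition, and kink cancellation mirrors RII), and the coboundary correction is handled by the same telescoping of factors $f(\mathrm{in})\,f(\mathrm{out})^{-1}$ attached to successive underpassings along the knot, which collapses to the unit because the coloring closes up consistently. The one step you flag as delicate --- possible residual terms at the self-crossings coming from the framing --- is in fact vacuous: a twist is just a crossing whose over- and under-strands belong to the same ribbon, so its $\delta^1 f$ weight is again $f(\mathrm{incoming})\,f(\mathrm{outgoing})^{-1}$ of the under-color (in each slot $j=1,2$, which are preserved by the blackboard framing) and it telescopes like any other crossing, exactly as in the paper's proof.
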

\begin{proof}
The Theorem is proved by showing that the state sum is invariant with respect to Reidemeister moves II and III, and with respect to kink cancellation. Compare with the moves T1-T5 and T6$_f$ given in \cite{FY}, page 166, and with rel$_1$-rel$_8$ in \cite{RT}, page 14. As seen in Lemma~\ref{lem:coloring}, applying a Reidemeister move II or III, or applying the kink cancellation move transforms one coloring of a diagram to another coloring. Assuming that the colors at the top strands, hen changing colorings between Reidemeister move II, are $(x,y)\times (z,w)$, we see that the consecutive crossings $\tau_1$ and $\tau_2$ contribute with a cocycle weight of $(\psi(x,z,w),\psi(y,z,w)$ and $(\psi(x,z,w)^{-1},\psi(y,z,w)^{-1})$ respectively. So the contributions cancel. Reidemeister mover III coincides with with the ternary $2$-cocycle condition, and the corresponding invariance is guaranteed. Cancellation of kinks is similar to the case of Reidemeister move II, and the sign in Definition~\ref{def:inv} ensures that the state sum does not change. Suppose $\psi$ is a coboundary, i.e. $\psi = \delta f$, for some $f:X\longrightarrow A$. Let $\mathcal C$ be a fixed coloring and suppose we order the crossings $\tau_1, \ldots , \tau_n$ starting from an arbitrarily chosen arc, and numbering the crossings as we encounter them following the arc along the knot. Consider $\tau_i$ and $\tau_{i+1}$, and assume without loss of generality that $\tau_i$ is a positive crossing. Then the Boltzmann sum contribution on the first entry of the state sum is given by $f(x)f(T(x,z,w))^{-1}$, assuming that the colorings on top of $\tau_i$ are $(x,y)\times (z,w)$. Likewise, the contribution on the second entry is $f(y)f(T(y,z,w))^{-1}$. Now, suppose $\tau_{i+1}$ is positive, and let $(z',w')$ overpass at $\tau_{i+1}$. Then the Boltzmann sum is given by $f(T(x,z,w))f(T(T(x,z,w),z',w'))^{-1}$, which shows that the two terms $f(T(x,z,w))$ and $f(T(x,z,w))^{-1}$ cancel. Similarly for the second entry of the Boltzmann weight. If $\tau_{i+1}$ is negative, and we let $(z',w')$ be underpassing, we see that the terms $f(T(x,z,w))$ and $f(T(y,z,w))$ again appear in both crossings with opposite signs, and cancel out again. Proceeding in this fashion we see that when we have $\tau_n$ and $\tau_1$ the remaining terms cancel out, since have assumed that $\mathcal C$ is a colorng, hence it is well defined. Each coloring in the state sum contributes with a trivial term, so the invariant of a coboundary simply counts the number of colorings. Consequently, it is an integer multiple of $(e,e)$. This fact implies that if $\psi$ and $\phi$ differ by a coboundary, their invariants differ by an integer multiple of $(e,e)$. A more concise argument, based on an interpretation of the state-sum invariant in terms of Kronecker pairing first described in \cite{CJKLS}, is given in Theorem~5.8 of \cite{SZ} for the case when $T$ is the heap of a group. The reader can verify that it is applicable also for general TSD operations.
	\end{proof}
\begin{remark}
		{\rm 
	The invariant $\Theta_{\mathcal D}(X,T,\psi)$ is, by construction, an element of the group ring $\Z [A\times A]$. Since there is an isomorphism $\Z[A\times A] \cong \Z[A]\otimes \Z[A]$, we can identify $\Theta_{\mathcal D}(X,T,\psi)$ with a sum of tensor products of elements of $A$. 
}
\end{remark}

We now generalize Definition~\ref{def:inv} to framed links with multiple components. First, if a link $\mathcal L$ has $t$ components $\mathcal L = \mathcal K_1 \cup \cdots \cup \mathcal K_t$, we label the crossings $\tau$ of $\mathcal L$ with the number of the component the underpassing ribbon belongs to. Therefore, for example, if at the crossing $\tau$ the underpassing ribbon belongs to the component $i$, $\tau$ is denoted by $\tau_i$. For $j = 1,2$ we define the Boltzmann weight $B_j^{(i)}(\mathcal C,\tau_i)$ relative to the crossing $\tau_i$, in the $i^{\rm th}$ component, as $\psi(x_j, y_1,y_2)^{\epsilon (\tau_i)}$, where $(y_1,y_2)$ is the coloring of the overpassing ribbon (not necessarily in the component $K_i$) and $(x_1,x_2)$ is the coloring of the underpassing ribbon (in the component $\mathcal K_i$ by assumption). In the following definition, we denote a vector with multiple entries being pairs with the notation $(a,b)\times \cdots \times (a',b')$. 

\begin{definition}
	{\rm 
Let the notation be as in the previous paragraph. Then the (vector) ribbon cocycle invariant of $\mathcal L$, relative to the ternary quandle $X$ and the ternary $2$-cocycle $\psi$ is defined as
$$
\vec{\Theta}_{\mathcal D} (X,T,\psi) =
 \sum_{\mathcal C} \times_{i=1}^t(\prod_{s=1}^{k(i)}\mathcal B_1^i(\mathcal C, \tau_i(s)),\prod_{s=1}^{k(i)}B_2^i(\mathcal C, \tau_i(s))),
$$
where $k(i)$ is the number of crossings in the $i^{\rm th}$ component, $\tau_i(s)$ indicates the $s^{\rm th}$ crossing in the $i^{\rm th}$ component, and the sum  indicates that in each component of the vector we are summing over all possible colorings $\mathcal C$. 
}
\end{definition}

An argument similar to that of Theorem~\ref{th:inv}, applied to each component, shows that the (vector) ribbon cocycle invariant does not depend on the isotopy class of the framed link $\mathcal L$ and it is therefore well defined.  Moreover, changing $\psi$ to another $2$-cocycle in the same second cohomology class changes the invariant by an integer multiple of the vector $(e,e)\times \cdots \times (e,e)$, where $e$ is the neutral element of the coefficient group used for cohomology. 
\section{Ribbon categories from self-distributive ternary operations}\label{sec:ribboncat}

In this section it is given a generalization of known constructions that allow to define a Yetter-Drinfeld module from a set-theoretic quandle \cite{Gra}. More specifically, the aim of this section is to define a ribbon category given a ternary self-distributive object in the symmetric monoidal category of vector spaces. As opposed to the case of a quandle, the ribbon category that is obtained in this procedure admits nontrivial twisting morphisms. These are defined by means of a self-crossing, similar to the Reidemeister move I, which in this case is not the trivial map. In this section, along with sections~\ref{sec:ribbonquantum} and~\ref{sec:examples} we focus on the case of ternary self-distributive objects obtained via linearization of set-theoretic structures. We use ternary cohomology in the usual sense~\cite{ESZ}. It will be shown in Section~\ref{sec:generalized} that this construction can be generalized to the case of ternary self-distributive objects in symmetric monoidal categories endowed with duals, using a generalized version of $2$-cocycle condition with coefficients in a group object of the given category. 


Let $Q := \{x_1,\ldots ,x_n\}$ be a finite ternary quandle with operation $T: Q\times Q\times Q\longrightarrow Q$. Then linearizing the operation over a ground field $\mathbbm k$ gives a ternary self-distributive object in the category of $\mathbbm k$-vector spaces, $(X, T)$, where $X:= \mathbbm k\langle x_1,\ldots , x_n\rangle$ and the linearized operation is indicated with the same symbol $T$.  

Let $H^2(Q,A)$ indicate the second ternary self-distributive cohomology group of $Q$, with coefficients in the multiplicative (abelian) group $A$. Fix a nontrivial 2-cocycle $\alpha: Q\times Q\times Q \longrightarrow A$. In the examples treated below we have the case when $A \subset \mathbbm k^{\times}$ or, more generally, when it is given a group character of $A$ in the group of units of $\mathbbm k$. The group $A$ therefore acts on the vector space $X$ and its tensor products via scalar multiplication of $\mathbbm k$.  We will use the symbol$\cdot$ to indicate this action, for clarity, and we use juxtaposition to denote the multiplication in $A$. 

Using this data, we define the brading $c^{\alpha}: X^{\otimes 2} \otimes X^{\otimes 2}\longrightarrow X^{\otimes 2} \otimes X^{\otimes 2}$ by the assignment
$$
(x\otimes y) \otimes (z\otimes w) \mapsto \alpha(x,z,w)\alpha(y,z,w)\cdot (z\otimes w)\otimes (T(x,z,w)\otimes T(y,z,w)), 
$$
having used a comma to separate the entries of $T$ instead of the symbol $\otimes$ to shorten notation. As it is proved below, the morphism $c^{\alpha}$ satisfies the braid equation if and only if $\alpha$ is a 2-cocycle, i.e. $[\alpha]\in H^2(Q, A)$ (cf. with Proposition 3.3 in \cite{Gra}). The twisting morpshism $\theta^{\alpha}_2 : X^{\otimes 2} \longrightarrow X^{\otimes 2}$ is defined by extending the assignement
$$
x\otimes y\mapsto \alpha(x,x,y)\alpha(y,x,y)\cdot  T(x,x,y)\otimes T(y,x,y).
$$

 This definition is motivated by introducing a complete twist in a ribbon by self crossing, Figure~\ref{fig:selfcrossing}. 

We introduce now the ribbon category whose objects are all even tensor powers of the vector space $X$ and generated by braiding and twisting given above. See\cite{Kas} Chapter XII, Section XII.1, for the general definition of presentation of a category. We give a very explicit definition below, to describe the morphisms in detail. 
  
\begin{definition}\label{def:ribboncat}
	{\rm 
	Let $(X,T)$ be a ternary self-distributive object in $\mathcal{V}_{\mathbbm{k}}$, as above, and $[\alpha]\in H^2(Q,A)$. Define the category $\mathcal{R}_{\alpha}(X)$ as follows. The objects are even powers $X^{\otimes 2n}$ of $X$ in the category $\mathcal{V}_{\mathbbm{k}}$. The tensor product of two objects $Y$ and $Z$, written $Y\boxtimes Z$, is defined to be the tensor product $\otimes$ in $\mathcal{V}_{\mathbbm{k}}$. The trivial power $X^0$ is set to be $\mathbbm{k}$ by definition. The morphisms of this category are defined as follows. The set ${\rm Hom}(X^{\otimes 2},X^{\otimes 2})$ consists of the identity map and twists $(\theta^\alpha)^{\circ m}$, where $m\in \Z$ and $\circ$ indicates composition. The set ${\rm Hom}(Y,Z)$ is empty if $Y\neq Z$. The morphism set ${\rm Hom}(X^{\otimes 4},X^{\otimes 4})$ is the free monoid generated under composition by twofold tensor products of $f,g\in {\rm Hom}(X^{\otimes 2},X^{\otimes 2})$, and the braiding $c^\alpha := c^\alpha_{2,2}$. The set ${\rm Hom}(X^{\otimes 2n},X^{\otimes 2n})$ is defined inductively as the free monoid generated under composition by tensor products $f\in {\rm Hom}(X^{\otimes 2m_1},X^{\otimes 2m_1})$ and $g\in {\rm Hom}(X^{\otimes 2m_2},X^{\otimes 2m_2})$ with $m_1 + m_2 = n$. The braidings $c^\alpha_{n,m} : X^{\otimes 2n}\boxtimes X^{\otimes 2m} \longrightarrow X^{\otimes 2m}\boxtimes X^{\otimes 2n}$ are the morphisms $X^{2(n+m)} \longrightarrow X^{2(n+m)}$ corresponding to block permutation switching $X^{\otimes 2n}$ and $X^{\otimes 2m}$, obtained by subsequent applications of $c^\alpha_{2,2}$. 
}
\end{definition}

Endow the category $\mathcal{R}_{\alpha}(X)$ with duals by setting $(X^{\otimes 2n})^* := (X^*)^{\otimes 2n}$, where $X^*$ is the linear dual of $X$. The evaluation map $ev$ is determined by $x\otimes y\boxtimes f\otimes g \mapsto f(y)g(x)$, and the coevaluation map $coev$ by $1\mapsto x_j\otimes x_i \boxtimes x^i\otimes x^j$, where the Einstein summation convention is used. Let $\mathcal{R}^*_{\alpha}(X)$ indicate the category $\mathcal{R}_{\alpha}(X)$ with the addition of duals. 

\begin{remark}\label{rmk:twist}
	{\rm 
 Direct inspection shows that the twisting morphism $\theta^\alpha$ defined above coincides with the self-intersection morphism obtained by means of $ev$ and $coev$ maps as composition: $(ev\otimes \mathbbm 1^{\otimes 2})\circ (\mathbbm 1^{\otimes 2}\otimes c^\alpha)\circ (coev\otimes \mathbbm 1^{\otimes 2})$, cf. Figure~\ref{fig:selfcrossing}. 
}
\end{remark}

\begin{theorem}\label{thm:ribboncat}
	With the same notation as above, the category $\mathcal{R}^*_{\alpha}(X)$ with braiding induced by $c^\alpha$ and twisting morphisms induced by $\theta^\alpha$ is a ribbon category. Moreover, if $[\alpha] = [\beta]$, then the two categories $\mathcal{R}^*_{\alpha}(X)$ and $\mathcal{R}^*_{\beta}(X)$ are equivalent. 
\end{theorem}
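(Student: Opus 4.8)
The plan is to check the ribbon-category axioms directly on the generating data $c^\alpha$, $\theta^\alpha$, $ev$ and $coev$, which suffices because by Definition~\ref{def:ribboncat} the category $\mathcal{R}^*_{\alpha}(X)$ is presented as the free strict monoidal category on the single generating object $X^{\otimes 2}$ equipped with these morphisms. Thus each ribbon axiom becomes one of the defining relations of the framed tangle category, precisely the moves compared against in the proof of Theorem~\ref{th:inv}, and I need only verify each relation for the explicit maps. Throughout I treat an object $X^{\otimes 2}$ as a pair and read off $c^\alpha((x\otimes y)\otimes(z\otimes w))=\alpha(x,z,w)\alpha(y,z,w)\cdot(z\otimes w)\otimes(T(x,z,w)\otimes T(y,z,w))$, separating its combinatorial part (governed by $T$) from its scalar weight (governed by $\alpha$).

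First I would establish the braided structure. Evaluating both composites of the Yang--Baxter equation $(c^\alpha\otimes\mathbbm 1)(\mathbbm 1\otimes c^\alpha)(c^\alpha\otimes\mathbbm 1)=(\mathbbm 1\otimes c^\alpha)(c^\alpha\otimes\mathbbm 1)(\mathbbm 1\otimes c^\alpha)$ on a basis vector of $X^{\otimes 6}$, each side yields an output whose combinatorial part is a nested application of $T$ and whose scalar part is a product of three $\alpha$-weights. The combinatorial parts coincide entrywise by the ternary self-distributive identity $T(T(x,y,z),u,v)=T(T(x,u,v),T(y,u,v),T(z,u,v))$; after cancelling the common weight, the surviving scalar identity is exactly, entry by entry, the $2$-cocycle condition $\alpha(x,y,z)\,\alpha(T(x,y,z),u,v)=\alpha(x,u,v)\,\alpha(T(x,u,v),T(y,u,v),T(z,u,v))$. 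This proves $c^\alpha$ is a Yang--Baxter operator iff $[\alpha]\in H^2(Q,A)$. Since the braidings $c^\alpha_{n,m}$ are assembled as block transpositions from $c^\alpha_{2,2}$, the hexagon axioms and naturality then follow from the standard braid-category argument \cite{FY,Kas}. Duality is settled by verifying the two snake identities for the explicit $ev(x\otimes y\boxtimes f\otimes g)=f(y)g(x)$ and $coev(1)=x_j\otimes x_i\boxtimes x^i\otimes x^j$, a routine computation in $\mathcal{V}_{\mathbbm k}$.

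Next I would verify the twist axioms for $\theta^\alpha$. By Remark~\ref{rmk:twist}, $\theta^\alpha$ equals the curl $(ev\otimes\mathbbm 1^{\otimes 2})(\mathbbm 1^{\otimes 2}\otimes c^\alpha)(coev\otimes\mathbbm 1^{\otimes 2})$, so the balancing axiom $\theta_{Y\otimes Z}=(\theta_Y\otimes\theta_Z)\circ c_{Z,Y}\circ c_{Y,Z}$ and the self-duality $\theta_{Y^*}=(\theta_Y)^*$ reduce to diagrammatic identities of framed tangles that hold by the braiding and duality coherences already in place, as in the ribbon closure of the braid category \cite{Tur,Kas}; the accompanying scalar bookkeeping closes up by the $2$-cocycle condition, paralleling the kink-cancellation computation of Theorem~\ref{th:inv}. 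I expect this to be the main obstacle: unlike the quandle case the twist is not the identity, so one must track with care the $\alpha$-weights produced by the self-crossing $T(x,x,y)$ and confirm that the balancing axiom imposes no relation beyond the cocycle condition.

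Finally, for the equivalence, suppose $[\alpha]=[\beta]$, so that $\beta=\alpha\cdot\delta^1 f$ with $\delta^1 f(x,y,z)=f(x)\,f(T(x,y,z))^{-1}$ for a $1$-cochain $f\from Q\longrightarrow A$. Let $\Phi\from X\longrightarrow X$ be the diagonal isomorphism $\Phi(x_i)=f(x_i)^{-1}\cdot x_i$, and on the dual set $\Psi(x^i)=f(x_i)\cdot x^i$. Define $F\from\mathcal{R}^*_{\alpha}(X)\longrightarrow\mathcal{R}^*_{\beta}(X)$ to be the identity on objects and, on morphisms, conjugation by $\Phi$ on the $X$-tensorands and by $\Psi$ on the $X^*$-tensorands. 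A direct computation gives $F(c^\alpha)=c^\beta$ and $F(\theta^\alpha)=\theta^\beta$: the inner entries $z,w$ contribute cancelling factors $f(z),f(w)$, while the surviving factor $f(x)\,f(T(x,z,w))^{-1}=\delta^1 f(x,z,w)$ is precisely the coboundary turning $\alpha$ into $\beta$; meanwhile $ev$ and $coev$ are fixed because $\langle\Psi x^i,\Phi x_j\rangle=f(x_i)f(x_j)^{-1}\delta^i_j=\delta^i_j$. Hence $F$ is a strict braided monoidal functor preserving duals and twists that is bijective on objects and on morphism monoids, yielding the required equivalence $\mathcal{R}^*_{\alpha}(X)\simeq\mathcal{R}^*_{\beta}(X)$.
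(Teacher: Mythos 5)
Your proposal is correct and follows essentially the same route as the paper: the braid equation is verified on basis vectors by splitting the combinatorial part (handled by ternary self-distributivity) from the scalar weight (handled by the $2$-cocycle condition), duality is checked on the explicit $ev$/$coev$, the twist axioms are reduced to the same sliding/kink computations controlled by the cocycle condition, and the equivalence is given by conjugating generators by the diagonal scaling built from the $1$-cochain $f$ — your per-factor map $\Phi(x_i)=f(x_i)^{-1}x_i$ is just the paper's $\tilde f(x\otimes y)=f(x)f(y)\cdot x\otimes y$ in disguise (up to replacing $f$ by $f^{-1}$), and your explicit treatment of $ev$/$coev$ under conjugation is a minor addition the paper leaves implicit.
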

\begin{proof}
	The fact that $\mathcal{R}_{\alpha}(X)$ is a tensor category is a consequence of the fact that $\mathcal V_{\mathbbm{k}}$ is a tensor category, and the definitions. To verify naturality of the braiding, it is enough to verify the commutativity of the square
	$$
	\begin{tikzcd}
	X^{\otimes 2n}\boxtimes X^{\otimes 2m}\arrow[r,"c^\alpha_{2n,2m}"]\arrow[d,"f\boxtimes g"] & X^{\otimes 2m}\boxtimes X^{\otimes 2n}\arrow[d,"g\boxtimes f"]\\
	X^{\otimes 2n}\boxtimes X^{\otimes 2m}\arrow[r,"c^\alpha_{2n,2m}"]& X^{\otimes 2m}\boxtimes X^{\otimes 2n}
	\end{tikzcd}
	$$
	for all morphisms $f$ and $g$. This follows from the fact that the morphisms are defined as block braidings induced by $c^\alpha$. So the naturality is a direct consequence of the definition of morphisms, and the fact that $c^\alpha$ satisfies the braid equation (proved below) in the same way it is proved for the braid category. The fact that $c^\alpha$ is a family of isomorphisms is a consequence of the invertibility of $T$ axiom, holding for ternary racks. To verify that $c^\alpha$ is indeed a braiding, it has to be shown that it satisfies the braid equation. To this objective, observe first that it is enough to prove that $c^\alpha_{2,2}$ satisfies the braid equation
	$$
  (c^\alpha_{2,2}\boxtimes \mathbbm{1}) \circ ( \mathbbm{1}\boxtimes c^\alpha_{2,2})\circ (c^\alpha_{2,2}\boxtimes \mathbbm{1}) = ( \mathbbm{1}\boxtimes c^\alpha_{2,2})\circ (c^\alpha_{2,2}\boxtimes \mathbbm{1})\circ ( \mathbbm{1}\boxtimes c^\alpha_{2,2}),
    $$
	 since the general case would follow from iterations of this specific case. Using the shorthand notation $T_x^{y,z} = T(x,y,z)$, on a general basis vector $x\otimes y\boxtimes z\otimes w\boxtimes u\otimes v$, the left hand side equals	
	 \begin{eqnarray*}
	 	\lefteqn{(c^\alpha_{2,2}\boxtimes \mathbbm{1}) \circ ( \mathbbm{1}\boxtimes c^\alpha_{2,2})\circ (c^\alpha_{2,2}\boxtimes \mathbbm{1})x\otimes y\boxtimes z\otimes w\boxtimes u\otimes v }\\
	 	&=& \alpha(x,z,w)\alpha(y,z,w)\alpha(T_x^{z,w},u,v)\alpha(T_y^{z,w},u,v)\alpha(z,u,v)\alpha(w,u,v)\\
	 	&& \cdot u\otimes v\boxtimes T_z^{u,v}\otimes T_w^{u,v}\boxtimes T_{T_x^{z,w}}^{u,v}\otimes T_{T_y^{z,w}}^{u,v},
	 	\end{eqnarray*}
 	while the right hand side equals
 	\begin{eqnarray*}
 		\lefteqn{( \mathbbm{1}\boxtimes c^\alpha_{2,2})\circ (c^\alpha_{2,2}\boxtimes \mathbbm{1})\circ ( \mathbbm{1}\boxtimes c^\alpha_{2,2}) x\otimes y\boxtimes z\otimes w\boxtimes u\otimes v}\\
 		&=& \alpha(z,u,v)\alpha(w,u,v)\alpha(x,u,v)\alpha(y,u,v)\alpha(T_x^{uv},T_z^{uv},T_w^{uv})\alpha(T_y^{uv},T_z^{uv},T_w^{uv})\\
 		&& \cdot u\otimes v\boxtimes T_z^{u,v}\boxtimes T_w^{u,v}\boxtimes T_{T_x^{u,v}}^{T_z^{u,v},T_w^{u,v}}\otimes T_{T_y^{u,v}}^{T_z^{u,v},T_w^{u,v}}.
 		\end{eqnarray*}
 	The two terms are seen to coincide by applying the $2$-cocycle condition to $x,z,w,u,v$ and $y,z,w,u,v$ separately, and the definition of self-distributivity of $T$. The fact that the duals turn $\mathcal{R}_{\alpha}(X)$ into a rigid category is standard. It is left to prove that the twisting morphisms are naturural with respect to the braiding, i.e. that 
 	$$
 (	\theta^\alpha \boxtimes \theta^\alpha ) \circ c^\alpha = c^\alpha \circ ( \theta^\alpha \boxtimes \theta^\alpha ),
 	$$
 	and that 
 	$$
 	\theta^\alpha_{Y\boxtimes Z} = c^\alpha_{Z,Y}\circ c^\alpha_{Y,Z}\circ (\theta^\alpha_Y\boxtimes \theta^\alpha_Z).
 	$$
 	The latter follows immediately from the definition of $\theta^\alpha$ as extension of $\theta^\alpha_2$. To prove naturality, observe that it is enough to show that 
 	$$
 	(\theta^\alpha\boxtimes \mathbbm{1}) \circ c^\alpha_{2,2} = c^\alpha_{2,2} \circ (\mathbbm{1}\boxtimes \theta^\alpha) 
 	$$
 	and 
 	$$
 	(\mathbbm{1}\boxtimes \theta^\alpha) \circ c^\alpha_{2,2} = c^\alpha_{2,2} \circ (\theta^\alpha \boxtimes \mathbbm{1}),
 	$$
 	since the general naturality with respect of the braiding is obtained diagrammatically by sliding twists below and above a crossing, as in Figure~\ref{fig:twistcrossing}. The case of a twist being slid below a crossing is similarly depicted. 
 	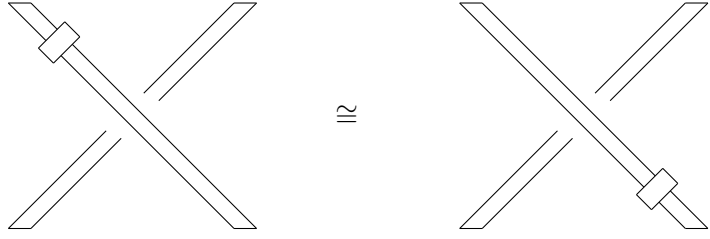
\begin{figure}
 		\begin{center}
 			\begin{tikzpicture}
 			\draw (0,3) -- (0.5,2.5);
 			\draw (0.3,3)--(0.65,2.65);
 			\draw (0.4,2.4)--(0.75,2.75);
 			\draw (0.4,2.4)--(0.6,2.2);
 			\draw (0.75,2.75)--(0.95,2.55);
 			\draw (0.6,2.2)--(0.95,2.55);
 			\draw (0.7,2.3)--(3,0);
 			\draw (0.85,2.45)--(3.3,0);

 			\draw (3,3) -- (1.8,1.8);
 			\draw (1.3,1.3) -- (0,0);
 			
 			\draw (3.3,3) -- (2,1.7);
 			\draw (1.5,1.2) -- (0.3,0);
 			
 			\draw (0,3) -- (0.3,3);
 			\draw (3,3) -- (3.3,3);
 			\draw (0,0) -- (0.3,0);
 			\draw (3,0) -- (3.3,0);
 			
 			
 			\node (a) at (4.5,1.5) {$\cong$};
 			
 			
 			\draw (6,3)--(6.3,3);
 			\draw (9,0)--(9.3,0);
 			
 			
 			\draw (8.9,0.6)--(8.55,0.25);
 			\draw (8.9,0.6)--(8.7,0.8);
 			\draw (8.55,0.25)--(8.35,0.45);
 			\draw (8.7,0.8)--(8.35,0.45);
 			
 			\draw (8.6,0.7)--(6.3,3);
 			\draw (8.45,0.55)--(6,3);
 			\draw (9,0)--(8.65,0.35);
 			\draw (9.3,0)--(8.8,0.5);

 			\draw (9,3) -- (7.8,1.8);
 			\draw (7.3,1.3) -- (6,0);
 			
 			\draw (9.3,3) -- (8,1.7);
 			\draw (7.5,1.2) -- (6.3,0);
 			\draw (6,0) -- (6.3,0);
 			\draw (9,3)--(9.3,3);
 			\end{tikzpicture}
 		\end{center}
 	\label{fig:twistcrossing}
 	\caption{A twist can be slid over a crossing.}
 	\end{figure} 
 	On simple tensors $x\otimes y\boxtimes z\otimes w$, the left hand side of the first equality becomes
 	\begin{eqnarray*}
 		\lefteqn{(\theta^\alpha \boxtimes \mathbbm{1}) \circ c^\alpha_{2,2} x\otimes y\boxtimes z\otimes w}\\
 		&=& \alpha(x,x,y)\alpha(y,x,y)\alpha(T_x^{z,w},z,w)\alpha (T_y^{z,w},z,w)\\
 		&& \cdot z\otimes w\boxtimes T_{T_x^{z,w}}^{z,w}\otimes T_{T_y^{z,w}}^{z,w},
 		\end{eqnarray*}
 	while the right hand side is
 	\begin{eqnarray*}
 	\lefteqn{c^\alpha_{2,2} \circ (\mathbbm{1}\boxtimes \theta^\alpha)x\otimes y\boxtimes z\otimes w }\\
 	&=& \alpha(x,z,w)\alpha(y,z,w) \alpha(T_x^{z,w},T_x^{z,w},T_y^{z,w})\alpha(T_y^{z,w},T_x^{z,w},T_y^{z,w})\\
 	&& \cdot T_z^{z,w}\otimes T_w^{z,w}\boxtimes T_{T_x^{z,w}}^{T_x^{z,w},T_y^{z,w}}\otimes T_{T_y^{z,w}}^{T_x^{z,w},T_y^{z,w}}.
 	\end{eqnarray*}
 These are seen to coincide upon applying the $2$-cocycle condition twice to $x,x,y,z,w$ and $y,x,y,z,w$, and using self-distributivity of $T$.  Similarly, for the second equality to be verified one has on the left hand side
 \begin{eqnarray*}
 \lefteqn{(\mathbbm{1}\boxtimes \theta^\alpha) \circ c^\alpha_{2,2}x\otimes y\boxtimes z\otimes w}\\
 &=& \alpha(z,z,w)\alpha(w,z,w)\alpha(x,T_z^{z,w},T_w^{z,w})\alpha(y,T_z^{z,w},T_w^{z,w})\\
 && \cdot T_z^{z,w}\otimes T_w^{z,w}\boxtimes T_x^{T_z^{z,w},T_w^{z,w}}\boxtimes T_x^{T_z^{z,w},T_w^{z,w}}.
 \end{eqnarray*}
To verify that they are the same, apply the $2$-cocycle condition to $T_x^{z,w},z,w,z,w$ and observe that 
\begin{eqnarray*}
\lefteqn{T(x,z,w)}\\
 &=& T(T(T(x,z,w),z,w),z,w)\\
 &=& T(T(T(x,z,w),z,w)T(z,z,w),T(w,z,w))\\
 &=& T(x,T(z,z,w),T(w,z,w)),
\end{eqnarray*}
and similarly $T(y,z,w) = T(y,T(z,z,w),T(w,z,w))$. This sequence of equalities is motivated diagrammatically by sliding a ribbon beneath a self crossing. 

Suppose now that $[\alpha] = [\beta]$, i.e. there exists $f : X\longrightarrow A$ such that $\alpha(x,y,z) = \beta(x,y,z) f(x)f(T(x,y,z))^{-1}$ for all $x,y,z$. Set $\tilde f: X^{\otimes 2} \longrightarrow X^{\otimes 2}$ as $\tilde f(x\otimes y) := f(x)f(y) \cdot x\otimes y$, and extended by linearity. The map $\tilde f$ has an inverse given by $\tilde f^{-1}(x\otimes y) := f(x)^{-1}f(y)^{-1} \cdot x\otimes y$. The definition of $\tilde f$ and its inverse clearly extends to objects of $\mathcal{R}^*_{\alpha}(X)$  and $\mathcal{R}^*_{\beta}(X)$. Define a functor $F: \mathcal{R}^*_{\beta}(X) \longrightarrow \mathcal{R}^*_{\alpha}(X)$ as follows. On objects $F$ is defined to be the identity. On morphisms $\tau\in {\rm Hom}(X^{\otimes 2n},X^{\otimes 2n})$ define $F(\tau) = (\tilde f^{-1})^{\otimes n}\circ \tau \circ (\tilde f^{\otimes n})$. This assignment is functorial, and moreover maps $\theta^\beta_2$ to $\theta^\alpha_2$ and $c^\beta_{2,2}$ to $c^\alpha_{2,2}$, since 
\begin{eqnarray*}
\lefteqn{(\tilde f^{-1} \theta^\beta_2 \tilde f)(x\otimes y)} \\
&=& (f(x)f(y) \beta(x,x,y)\beta(y,x,y)f(T(x,x,y))^{-1}f(T(y,x,y))^{-1}) \\
&& \cdot  x\otimes y\\
&=& \alpha(x,x,y)\alpha(y,x,y) \cdot x\otimes y\\
&=& \theta^\alpha_2 (x\otimes y)
\end{eqnarray*}
\begin{eqnarray*}
\lefteqn{(\tilde f^{-1} c^\beta_{2,2} \tilde f)(x\otimes y\boxtimes z\otimes w)} \\ &=&(f(x)f(y)\beta(x,z,w)\beta(y,z,w)f(T((x,z,w))^{-1}f(T((y,z,w))^{-1})\\
&& \cdot z\otimes w\boxtimes T((x,z,w)\otimes T((y,z,w)\\
&=& \alpha(x,z,w) \alpha(y,z,w)\cdot z\otimes w\boxtimes T((x,z,w)\otimes T((y,z,w)\\
&=& \theta^\alpha_{2,2} (x\otimes y\boxtimes z\otimes w).
\end{eqnarray*}
 The definition of $F$ clearly respects tensor products and due to the inductive definition of twists and braiding in the categories $\mathcal{R}^*_{\alpha}(X)$ and $\mathcal{R}^*_{\beta}(X)$, it follows that $F$ is a braided tensor functor. Since $F$ is essentially surjective on objects (it is the identity), and fully faithful on morphisms (due to invertibility of $\tilde f$), $F$ is an equivalence of braided tensor categories that respects the twisting structure. This completes the proof. 
\end{proof}

\begin{remark}
	{\rm 
Observe that the twisting morphisms $\theta^\alpha$ are in general nontrivial, even for a trivial $\alpha$.  	
}
\end{remark}

The main purpose of the construction of Theorem~\ref{thm:ribboncat}, is to show that the natural invariants associated to the ribbon category coincide with the cocycle invariants defined in Section~\ref{sec:ribboncocy}. It is in fact possible to bypass this construction and prove that the twisting morphsism $\theta^\alpha$ and the braiding morphism $c^\alpha_{2,2}$ induce a representation of the infinite framed braid group (the inductive limit of the framed braid groups $FB_n$ of \cite{KS}) similarly to \cite{Tur}, using the linear map $\Phi_b$ of Section~\ref{sec:ribbonquantum} (given below). It is convenient, though, to define a ribbon category from a ternary self-distributive structure and a ternary $2$-cocycle since this is suitable for a generalization to multiple compatible self-distributive structures.
\begin{definition}\label{def:compatiblestructures}
	{\rm
	A {\it compatible system of ternary self-distributive structures} is a finite family $\{(Q_i,T_i)\}_{i\in I}$ of ternary self-distributive sets along with actions
	$$
	T_{ij}: Q_i\times Q_j\times Q_j \longrightarrow Q_i ,
	$$
	of $Q_j\times Q_j$ on $Q_i$ for all $i,j =1,\ldots n$, satisfying the compatibility condition
	$$
	T_{ik}\circ (T_{ij}\times \mathbbm{1}\times \mathbbm{1}) = T_{ij}\circ (T_{ik}\times T_{jk}\times T_{jk})\circ \shuffle \circ (\mathbbm{1}^{\times 3}\times \Delta_3\times \Delta_3),
	$$
	where $\Delta_3 = (\Delta\times \mathbbm{1})\circ \Delta$ and $\shuffle$ is the permutation map corresponding to ternary self-distributivity. Such a system is denoted $\{Q_i,T_{ij}\}$, where it is implicitly assumed that $T_{ii} := T_i$.
}
\end{definition} 

Linearizing such a system of compatible structues gives a multi-object analogue of ternary self-distributive object in the category of vector spaces. We will sometime call the linearized object a compatible system of self-distributive structures, since no confusion will arise. 

\begin{remark}\label{rmk:mutuallydist}
	{\rm
There are infinitely many examples of the structure given in Definition~\ref{def:compatiblestructures} arising from mutually distributive structures as in \cite{ESZ}. In fact let $(X,T_0,T_1)$ be a mutually self-distributive set (with $X$ finite). Linearizing $T_0$ and $T_1$ over a field $\mathbbm{k}$ and defining $T_{10} = T_0 = T_{00}$ and $T_{01} = T_1 = T_{11}$ over the vector space, it is seen by direct inspection that the structure $\{\mathbbm{k}\langle X\rangle ,T_{ij}\}_{i,j = 0,1}$ is a compatible system of ternary self-distributive structures. 
}
\end{remark}
In Section~\ref{sec:examples} it will be seen that compositions of $G$-families of quandles provide other natural examples of these structures. Furthermore, in the Appendix, more examples from augmented Hopf modules will be introduced. In particular it will be seen that there are compatible systems with multiple base spaces. 

To the notion of compatible system of ternary self-distributive structures, there corresponds the notion of compatible system of ternary $2$-cocycles as follows. 

\begin{definition}\label{def:compatible2cocy}
	{\rm 
		Let $\{(Q_i,T_{ij})\}$ be a compatible system of ternary self-distributive structures.  A {\it compatible system of $2$-cocycles} with coefficients in $A$ (abelian group with multiplicative notation) is a family of maps $\alpha_{ij}: X_i\times X_j\times X_j \longrightarrow A$ such that
		\begin{eqnarray*}
		\lefteqn{\alpha_{ij}(x, y, z) \cdot \alpha_{ik}(T_{ij}(x, y, z), u, v)}\\
			&=& \alpha_{ik}(x, u, v) \cdot \alpha_{ij}(T_{ik}(x, u, v),T_{jk}(y, u, v), T_{jk}(z, u, v)),
		\end{eqnarray*}
	for all $x\in X_i$, $y,z\in X_j$ and $u,v\in X_k$ and all $i,j,k$. Such a family of maps is denoted by the symbol $\{\alpha_{ij}\}$, where parentheses can be omitted to shorten notation. 
}
\end{definition}

\begin{definition}\label{def:trivial2cocy}
	{\rm 
A compatible system of $2$-cocycles $\{\alpha_{ij}\}$ is said to be trivial, or cobounded, if there exists a family of maps $f_i : X_i \longrightarrow A$ such that 
$$
\delta f_i (x\times y_1\times y_2) := f_i(x) f_i(T_{ij}(x\times y_1\times y_2))^{-1} = \alpha_{ij} (x\times y_1\times y_2)
$$
for all $i,j$ and all $x\in X_i$, $y_1,y_2\in X_j$. 

Two systems of $2$-cocycles, $\{\alpha_{ij}\}$ and $\{\beta_{ij}\}$ are said to be equivalent if the system $\{\alpha_{ij}\beta^{-1}_{i,j}\}$ is trivial. 
}
\end{definition}
\begin{remark}\label{rmk:nontrivialcocy}
	{\rm 
	Observe that when $i=j=k$ it follows that each $\alpha_{ii}$ is a ternary $2$-cocycle for the ternary self-distributive structure $T_i$, and moreover the triviality condition gives that $\alpha_{ii}$ represents the trivial class in second ternary self-distributive cohomology group. It follows that if $\alpha_{ii}$ is not cobounded in the ternary self-distributive cochain, then a compatible system of $2$-cocycles is nontrivial in the sense of Definition~\ref{def:trivial2cocy}.
}
\end{remark}

\begin{remark}
	{\rm 
	Definitions~\ref{def:compatible2cocy} and~\ref{def:trivial2cocy} are clearly reminiscent of a cohomology theory. It is natural to ask whether such a theory derives from a deformation theory of compatible systems. The answer is no, in that infinitesimal deformations of compatible systems require more conditions to be satisfied, than just the $2$-cocycle condition of Definition~\ref{def:compatible2cocy}.
}
\end{remark}

\begin{remark}
	{\rm 
  In the case of Remark~\ref{rmk:mutuallydist}, when a system of compatible structures is defined on the same base space, a compatible system of $2$-cocycles is the same as a $2$-cocycle in the labeled cohomology of \cite{ESZ}. 
}
\end{remark}
Let $\{(Q_i,T_{ij})\}$ be a compatible system of ternary self-distributive structures and let $\{\alpha_{ij}\}$ be a compatible system of $2$-cocycles. Let $X_i = \mathbbm k\langle Q_i\rangle$ and denote the linearized maps $T_{ij}$ by the same symbols. To these data it is possible to associate twisting morphisms and braidings as follows. For each $X_i$ there is a twisting $\theta^{\alpha_{ij}}: X_i\otimes X_i\longrightarrow X_i\otimes X_i$ defined by extending the assignment
$$
x\otimes y\mapsto \alpha_{ii}(x,x,y)\alpha_{ii}(y,x,y)\cdot T_i(x,x,y)\otimes T_i(y,x,y).
$$
For each pair of objects $X_i$ and $X_j$ there is a braiding $c^{\alpha_{ij}}: X_i^{\otimes 2}\otimes X_j^{\otimes 2} \longrightarrow X_j^{\otimes 2}\otimes X_i^{\otimes 2}$ induced by
$$
(x\otimes y) \otimes (z\otimes w) \mapsto \alpha_{ij}(x,z,w)\alpha_{ij}(y,z,w)\cdot (z\otimes w)\otimes (T_{ij}(x,z,w)\otimes T_{ij}(y,z,w)).
$$

The construction of Definition~\ref{def:ribboncat} is adapted to this case and the corresponding category is denoted by $\mathcal{R}^*_{\{\alpha_{ij}\}}(\{X_i\})$.

\begin{theorem}\label{thm:compatibleribboncat}
	The category $\mathcal{R}^*_{\{\alpha_{ij}\}}(\{X_i\})$ with braiding and twisting morpshisms defined above is a ribbon category. Moreover, if two systems $\{\alpha_{ij}\}$ and $\{\beta_{i,j}\}$ are equivalent, then $\mathcal{R}^*_{\{\alpha_{ij}\}}(\{X_i\})$ and $\mathcal{R}^*_{\{\beta_{ij}\}}(\{X_i\})$ are equivalent.
\end{theorem}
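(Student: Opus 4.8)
The plan is to run the proof of Theorem~\ref{thm:ribboncat} essentially verbatim, with the single cocycle $\alpha$ and operation $T$ replaced by the indexed families $\{\alpha_{ij}\}$ and $\{T_{ij}\}$, and to check that the two new axioms — the compatibility condition of Definition~\ref{def:compatiblestructures} and the compatible $2$-cocycle condition of Definition~\ref{def:compatible2cocy} — are exactly what is needed to make the relevant diagrams commute on \emph{mixed} tensor products. As before, the tensor-category structure is inherited from $\mathcal V_{\mathbbm k}$, and each $c^{\alpha_{ij}}$ is an isomorphism because the maps $T_{ij}(\bullet,y,z)$ are bijections by the ternary rack axioms; naturality of the braiding then follows from the block-braiding definition of the morphisms exactly as in the single-object case.

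The heart of the argument will be the braid equation, which it suffices to verify for the elementary braidings on a basis vector $x\otimes y\boxtimes z\otimes w\boxtimes u\otimes v$ with $x,y\in Q_i$, $z,w\in Q_j$, $u,v\in Q_k$. The key new feature is that the three elementary braidings occurring are no longer copies of one map: tracking the object arrangement, the left-hand side composes $c^{\alpha_{ij}}$, $c^{\alpha_{ik}}$, $c^{\alpha_{jk}}$ and the right-hand side composes them in the opposite order. Computing both sides as in Theorem~\ref{thm:ribboncat}, I expect the underlying (uncoloured) outputs to agree by the compatibility condition
$$
T_{ik}\circ (T_{ij}\times \mathbbm{1}\times \mathbbm{1}) = T_{ij}\circ (T_{ik}\times T_{jk}\times T_{jk})\circ \shuffle \circ (\mathbbm{1}^{\times 3}\times \Delta_3\times \Delta_3),
$$
while the accumulated scalar weights agree by applying the compatible $2$-cocycle condition of Definition~\ref{def:compatible2cocy} to the triples $(x,z,w,u,v)$ and $(y,z,w,u,v)$ separately. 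In other words, Definition~\ref{def:compatiblestructures} plays the role of the single self-distributivity identity used before, and Definition~\ref{def:compatible2cocy} plays the role of the single $2$-cocycle identity.

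With the braid equation in hand, rigidity is obtained by adjoining duals $(X_i^{\otimes 2n})^*:=(X_i^*)^{\otimes 2n}$ with the same $ev$ and $coev$ as in Theorem~\ref{thm:ribboncat}, and the twist axioms are verified identically. The identity $\theta^{\alpha}_{Y\boxtimes Z}=c^{\alpha}_{Z,Y}\circ c^{\alpha}_{Y,Z}\circ(\theta^{\alpha}_Y\boxtimes\theta^{\alpha}_Z)$ holds by construction of the twist as the extension of the elementary twists $\theta^{\alpha_{ii}}$, and naturality of $\theta^{\alpha}$ with respect to the braiding reduces, as before, to two elementary equalities. These I expect to follow from the compatible $2$-cocycle condition applied to the triples producing self-crossings, together with the sliding identity $T_{ij}(x,z,w)=T_{ij}\bigl(x,T_j(z,z,w),T_j(w,z,w)\bigr)$ obtained by pushing a ribbon beneath a self-crossing, which is the multi-object analogue of the computation displayed in the proof of Theorem~\ref{thm:ribboncat}.

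For the second assertion, suppose $\{\alpha_{ij}\}$ and $\{\beta_{ij}\}$ are equivalent, so that $\alpha_{ij}\beta_{ij}^{-1}=\delta f_i$ for a family $f_i\colon Q_i\to A$ in the sense of Definition~\ref{def:trivial2cocy}. I would set $\tilde f_i(x\otimes y):=f_i(x)f_i(y)\cdot x\otimes y$ on $X_i^{\otimes 2}$, extend to all objects, and define $F$ to be the identity on objects and $F(\tau)=\tilde f^{-1}\circ\tau\circ\tilde f$ on morphisms (with the appropriate tensor powers of the $\tilde f_i$). The same telescoping computation as in Theorem~\ref{thm:ribboncat}, using $\alpha_{ij}\beta_{ij}^{-1}=f_i(x)f_i(T_{ij}(x,y,z))^{-1}$, should show that $F$ carries $c^{\beta_{ij}}$ to $c^{\alpha_{ij}}$ and the elementary twists $\theta^{\beta_{ii}}$ to $\theta^{\alpha_{ii}}$; since $F$ is the identity on objects and each $\tilde f_i$ is invertible, $F$ is a braided tensor equivalence respecting the twist. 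The step I expect to be the main obstacle is the bookkeeping in the braid-equation verification: one must correctly assign the three distinct cocycles $\alpha_{ij},\alpha_{ik},\alpha_{jk}$ to their respective crossings on both sides, and then recognize that the scalar identity produced is \emph{precisely} the compatible $2$-cocycle identity of Definition~\ref{def:compatible2cocy}, rather than some stronger relation.
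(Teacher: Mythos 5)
Your proposal is correct and takes essentially the same approach as the paper, whose own proof of Theorem~\ref{thm:compatibleribboncat} simply states that one reruns the argument of Theorem~\ref{thm:ribboncat} with self-distributivity of $T$ replaced by the compatibility condition of $\{T_{ij}\}$, the $2$-cocycle condition of $\alpha$ replaced by the compatible $2$-cocycle condition of $\{\alpha_{ij}\}$, and maps $\tilde f_i$ constructed as before to give the equivalence. Your write-up actually supplies more detail than the paper does, in particular the correct bookkeeping of the three distinct braidings $c^{\alpha_{ij}}, c^{\alpha_{ik}}, c^{\alpha_{jk}}$ appearing in the braid equation and the observation that the resulting scalar identity is exactly Definition~\ref{def:compatible2cocy}.
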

\begin{proof}
	The proof is substantially the same as that of Theorem~\ref{thm:ribboncat}, where the $2$-cocycle condition of $\alpha$ is replaced by compatibility condition of the family $\{\alpha_{ij}\}$ and self-distributivity of $T$ is replaced by compatibility of the system $\{T_{ij}\}$. When $\{\alpha_{ij}\}$ and $\{\beta_{ij}\}$ are equivalent, one can construct maps $\tilde f_i$ as in Theorem~\ref{thm:ribboncat} and show that these induce an equivalence of ribbon categories. 
\end{proof}

Observe that $\mathcal{R}^*_{\{\alpha_{ij}\}}(\{X_i\})$ is a ribbon category with ``distinguished'' objects $X_i\otimes X_i$, and other objects given by tensor products obtained from the distinguished ones. 

\section{The Ribbon cocycle invariant is a quantum invariant}\label{sec:ribbonquantum}

The ribbon category $\mathcal{R}^*_{\alpha}(X)$ allows to define an invariant of framed links from any ternary self-distributive operation and a fixed ternary $2$-cocycle, following standard procedures \cite{Tur}. Let $(X,T)$ be a ternary self-distributive object arising from a set-theoretic ternary quandle $Q$ as above, and $[\alpha] \in H^2(Q,A) $ be fixed. A framed link is represented by the closure of an element $b \in FB_n$ of the framed braid group on $n$ ribbons \cite{KS} where, since twisting of the ribbon and crossings commute, it is assumed that the twists are on top of the braid. Using the same notation of \cite{KS}, $$b = t_1^{r_1}\cdots t_n^{r_n} \cdot \tau,$$
where $r_i$ are integers indicating the number of twists of the $i^{\rm th}$ ribbon and $\tau$ is an element of the braid group $B_n$. 

Then, the quantum invariant associated to $(X,T)$ and $\alpha$, is obtained by considering the object $X^2\boxtimes \cdots \boxtimes X^2$ ($n$-fold product) in $\mathcal{R}^*_{\alpha}(X)$ and taking the trace of the morphism $\Phi_b: X^2\boxtimes \cdots \boxtimes X^2 \longrightarrow X^2\boxtimes \cdots \boxtimes X^2$ corresponding to $b$ as follows. Each generator $t_i^{r_i}$ corresponds to $(\theta^\alpha)^{r_i}$, and each crossing $\sigma_i^{\pm 1}$ in the product defining $\tau$ corresponds to $(c^\alpha_{2,2})^{\pm 1}$. This invariant is denoted by the symbol
$$
\Psi_{\mathcal D} (X,T,\alpha),
$$
where $\mathcal D$ is the ribbon diagram representing $b$.

The following theorem establishes that two procedures described in Section~\ref{sec:ribboncocy} and this section coincide. 

\begin{theorem}\label{thm:quantum}
	Let $L$ be a framed link, with presentation given by a framed braid $b$ as above, $(X,T)$ be a ternary self-distributive structure that is linearized over $\mathbbm k$, and $\alpha$ a $2$-cocycle of $(X,T)$ with coefficitents in $A$. Suppose that $\chi: A\longrightarrow \mathbbm k^\times$ is a group character. Fix a diagram $\mathcal D$ of $L$. Then the ribbon cocycle invariant $\chi \Theta_{\mathcal D}(X,T,\alpha)$ and the quantum invariant $\Psi_{\mathcal D} (X,T,\chi\circ \alpha)$ coincide.	
	\end{theorem}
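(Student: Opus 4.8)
The plan is to show that the quantum invariant $\Psi_{\mathcal D}(X,T,\chi\circ\alpha)$, defined as a categorical trace of the morphism $\Phi_b$, reproduces exactly the state-sum $\chi\Theta_{\mathcal D}(X,T,\alpha)$ by unwinding the definition of the trace in the ribbon category $\mathcal R^*_\alpha(X)$ on a fixed basis. First I would set up the bookkeeping: the framed braid $b = t_1^{r_1}\cdots t_n^{r_n}\cdot\tau$ with $\tau\in B_n$ gives a morphism $\Phi_b\colon X^{\otimes 2n}\longrightarrow X^{\otimes 2n}$ obtained by sending each twist generator $t_i^{r_i}$ to $(\theta^\alpha)^{r_i}$ acting on the $i^{\rm th}$ tensor pair, and each braid generator $\sigma_i^{\pm 1}$ to $(c^\alpha_{2,2})^{\pm 1}$. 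I would fix the basis $\{x_{i_1}\otimes x_{i_2}\otimes\cdots\otimes x_{i_{2n}}\}$ of $X^{\otimes 2n}$, where basis vectors $x_j\in Q$ correspond to colors of the ribbon arcs, so that a basis vector of $X^{\otimes 2n}$ at the bottom of the braid is precisely a choice of colors for the $n$ incoming ribbons.

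The key observation, which I would establish next, is that on basis vectors neither $c^\alpha_{2,2}$ nor $\theta^\alpha$ creates linear combinations: each sends a basis vector to a scalar multiple (the product of $\chi\circ\alpha$-weights) of another basis vector, by the very definition of the braiding and twist via the ternary quandle operation $T$. Hence $\Phi_b$ is a ``monomial'' map: its matrix coefficient $\langle \text{basis vector} \mid \Phi_b \mid \text{basis vector}\rangle$ is a product of scalar weights times a permutation-of-colors indicator. I would then compare the color-propagation rule of the braiding/twist (coming from $T$, $T_{ij}$ and the inverse via ternary rack invertibility) with the coloring rule of Definition~\ref{def:colorings}: a basis vector is carried by $\Phi_b$ to a \emph{scalar multiple of itself} precisely when the chosen bottom colors extend to a consistent coloring $\mathcal C$ of the closed diagram $\mathcal D$, i.e.\ when the colors match up around the closure. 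This identifies the set of basis vectors surviving the diagonal of $\Phi_b$ with the set of colorings $\mathcal C$, and the accumulated scalar with $\chi$ applied to the product of Boltzmann weights $\prod_i\alpha(x_1^{\tau_i},y_1^{\tau_i},y_2^{\tau_i})^{\epsilon(\tau_i)}$ and $\prod_i\alpha(x_2^{\tau_i},y_1^{\tau_i},y_2^{\tau_i})^{\epsilon(\tau_i)}$ appearing in Definition~\ref{def:inv}.

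The final step is to take the categorical trace. Using the $ev$ and $coev$ maps specified after Definition~\ref{def:ribboncat}, the quantum invariant equals the sum over basis vectors of the diagonal matrix coefficients of $\Phi_b$; by the previous paragraph this is exactly $\sum_{\mathcal C}\chi\bigl(\prod_i \mathcal B_1(\mathcal C,\tau_i)\bigr)\chi\bigl(\prod_i \mathcal B_2(\mathcal C,\tau_i)\bigr)$, which is $\chi\Theta_{\mathcal D}(X,T,\alpha)$ after applying $\chi$ to the group-ring element of $\Z[A\times A]$ termwise (using that $\chi$ is a group homomorphism, so it converts products in $A$ to products in $\mathbbm k^\times$ and the state-sum in $\Z[A\times A]$ to a sum in $\mathbbm k$). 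Here I would invoke Remark~\ref{rmk:twist} to make sure the trace built from $ev$, $coev$ and the braiding produces precisely the self-crossing $\theta^\alpha$ for each twist $t_i^{r_i}$, so that the twist contributions in $\Phi_b$ match the kink contributions in the state-sum, and invoke Theorem~\ref{thm:ribboncat} to know $\Psi_{\mathcal D}$ is genuinely an invariant (independent of the braid presentation of $L$).

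The main obstacle I anticipate is the careful matching of orientations and of the ``two-strand'' ribbon bookkeeping: each ribbon is a pair $(x_1,x_2)\in X\times X$, the braiding acts on $X^{\otimes 2}\otimes X^{\otimes 2}$, and one must verify that the trace's closure identifications (via $ev$, $coev$) glue the two parallel arcs correctly and that the inverse braiding $(c^\alpha_{2,2})^{-1}$ reproduces the negative-crossing coloring rule $T(y_i,x_1,x_2)=z_i$ with the correct inverse weight $\alpha(\cdots)^{-1}$. In other words, the heart of the proof is the diagrammatic dictionary between matrix coefficients of $\Phi_b$ and Boltzmann weights of colored crossings; once that dictionary is pinned down, the equality of the trace with the state-sum is a formal rearrangement of a finite sum.
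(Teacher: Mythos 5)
Your proposal is correct and follows essentially the same route as the paper's proof: both fix the basis of $X^{\otimes 2n}$ indexed by colors, observe that $c^\alpha_{2,2}$ and $\theta^\alpha$ act monomially (basis vector $\mapsto$ scalar times basis vector), use the $ev$/$coev$ maps to see that only diagonal matrix coefficients of $\Phi_b$ survive the trace, identify those diagonal basis vectors with colorings of $\mathcal D$, and identify the accumulated scalars with the $\chi$-images of the Boltzmann weights. The paper's argument (modeled on Gra\~na's Theorem~3.5) is terser but contains exactly the dictionary you describe, so your additional care about negative crossings and the closure bookkeeping is a faithful elaboration rather than a different method.
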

\begin{proof}
	The proof is very similar to that of Theorem~3.5 in \cite{Gra}. Observe that in order to compute $\Psi_{\mathcal D} (X,T,\alpha)$, one has to consider all combinations of basis vectors $x_{i_1}^{\epsilon(i_1)}\otimes \cdots\otimes x_{i_n}^{\epsilon(i_n)}$ apply the endomorphism $\Phi_b$ and therefore  apply it to all the possible combinations $x_{j_1}^{\epsilon(j_1)}\otimes \cdots \otimes x_{j_n}^{\epsilon(j_n)}$. It follows that the only nontrivial contributions to  $\Psi_{\mathcal D} (X,T,\alpha)$ are obtained when $x_{j_1}^{\epsilon(j_1)}\otimes \cdots \otimes x_{j_n}^{\epsilon(j_n)} = x_{i_1}^{-\epsilon(i_1)}\otimes \cdots \otimes x_{i_n}^{-\epsilon(i_n)}$, by definition of $ev$ and $coev$ maps in $\mathcal{R}^*_{\alpha}(X)$. At each crossing, whether corresponding to a twisting $t^{r_i}$ or one of the factors of the braid $\tau$, $\Phi_b$ contributes with a scalar given by either
	$$
	\alpha(z_1,z_1,z_2) \alpha(z_2,z_1,z_2),
	$$
	or
	$$
	\alpha(z_1,w_1,w_2) \alpha(w_2,w_1,w_2),
	$$
	where it has been assumed that $\theta^\alpha$ is applied to the vector $z_1\otimes z_2$ and $c^\alpha$ is applied to the vector $z_1\otimes z_2\boxtimes w_1\otimes w_2$. The vector output is given by either
	$$
	T(z_1,z_1,z_2)\otimes T(z_2,z_1,z_2),
	$$
	or
	$$
	w_1\otimes w_2\boxtimes T(z_1,w_1,w_2)\otimes T(w_2,w_1,w_2).
	$$
	It therefore follows that the only nontrivial contribution to $\Psi_{\mathcal D} (X,T,\alpha)$ corresponds to the colorings of $\mathcal D$ by $X$ and each contribution equals one of the summands that define $\Theta_{\mathcal D}(X,T,\alpha)$. The proof is complete. 
\end{proof}

The quantum invariant  $\Psi_{\mathcal D} (X,T,\alpha)$ does not only provide a differennt interpretation of the ribbon cocycle invariant $\Theta_{\mathcal D}(X,T,\alpha)$, but it is also suitable for a generalization to framed braids (not necessarily closed). 
\begin{theorem}
	Let $F$ be a diagram of a framed braid $b \in FB_n$. Then $\Phi_b$ defined as above is an invariant of $F$. 
\end{theorem}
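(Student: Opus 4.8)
The plan is to show that $\Phi$ descends to a well-defined group homomorphism from the framed braid group $FB_n$ into the automorphism group of $X^{\otimes 2n}$ in $\mathcal{R}^*_\alpha(X)$; once this is established, $\Phi_b$ will depend only on the class $b\in FB_n$, hence only on the framed braid represented by $F$ and not on the particular word presenting it, which is exactly the asserted invariance. Recall from \cite{KS} that $FB_n$ is generated by the elementary crossings $\sigma_1,\ldots,\sigma_{n-1}$ and the elementary twists $t_1,\ldots,t_n$, subject to the braid relations together with relations governing how twists commute among themselves, commute with distant crossings, and are transported by adjacent crossings. On generators I would set $\Phi(\sigma_i)=\mathbbm{1}^{\boxtimes(i-1)}\boxtimes c^\alpha_{2,2}\boxtimes\mathbbm{1}^{\boxtimes(n-i-1)}$ and $\Phi(t_i)=\mathbbm{1}^{\boxtimes(i-1)}\boxtimes\theta^\alpha\boxtimes\mathbbm{1}^{\boxtimes(n-i)}$, matching the definition of $\Phi_b$ above. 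Since $c^\alpha_{2,2}$ is invertible (by the invertibility of $T$ on a ternary rack, as noted in the proof of Theorem~\ref{thm:ribboncat}) and $\theta^\alpha$ is an isomorphism in the ribbon category, $\Phi$ sends inverse generators to inverse morphisms, so it suffices to check the defining relations.

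First I would treat the purely braid-theoretic relations. The relation $\sigma_i\sigma_{i+1}\sigma_i=\sigma_{i+1}\sigma_i\sigma_{i+1}$ is sent by $\Phi$ to the Yang--Baxter (braid) equation for $c^\alpha_{2,2}$, which is precisely the identity verified in the proof of Theorem~\ref{thm:ribboncat}. The far-commutativity relation $\sigma_i\sigma_j=\sigma_j\sigma_i$ for $|i-j|\ge 2$ is immediate, since the two morphisms act on disjoint tensor factors of $X^{\otimes 2n}$ and therefore commute. Likewise the relations $t_it_j=t_jt_i$ and $\sigma_i t_j=t_j\sigma_i$ for $j\neq i,i+1$ hold automatically, as the corresponding morphisms again have disjoint supports.

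It then remains to verify the relations transporting a twist across an adjacent crossing, say $\sigma_i t_i=t_{i+1}\sigma_i$ together with its companion $\sigma_i t_{i+1}=t_i\sigma_i$. Under $\Phi$ these become exactly the naturality identities $(\theta^\alpha\boxtimes\mathbbm{1})\circ c^\alpha_{2,2}=c^\alpha_{2,2}\circ(\mathbbm{1}\boxtimes\theta^\alpha)$ and $(\mathbbm{1}\boxtimes\theta^\alpha)\circ c^\alpha_{2,2}=c^\alpha_{2,2}\circ(\theta^\alpha\boxtimes\mathbbm{1})$, which were established in the course of proving Theorem~\ref{thm:ribboncat} (compare Figure~\ref{fig:twistcrossing}, where a twist is slid over a crossing). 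Since every defining relation of $FB_n$ is thereby respected, $\Phi$ factors through $FB_n$, and $\Phi_b$ is well defined on equivalence classes.

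The computational heart of the argument, namely the braid equation for $c^\alpha_{2,2}$ and the compatibility of $\theta^\alpha$ with the braiding, has already been carried out in Theorem~\ref{thm:ribboncat}, so the only real work here is bookkeeping: fixing the correct presentation of $FB_n$ and matching each relation to an identity proved there. The one point requiring a little care is the twist-transport relation, since it is the genuinely framed, as opposed to classical, part of the group and is the place where the nontriviality of $\theta^\alpha$ enters; I expect this matching of the framing relations to the twist-naturality equations to be the main, though modest, obstacle.
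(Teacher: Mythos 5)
Your proposal is correct and is exactly the argument the paper intends: the paper omits a proof of this theorem, but its earlier remark (before Definition~\ref{def:compatiblestructures}) states that $c^\alpha_{2,2}$ and $\theta^\alpha$ induce a representation of the framed braid group, and your verification of the presentation of $FB_n$ — braid relations via the Yang--Baxter equation, far-commutativity via disjoint tensor factors, and the twist-transport relations via the two naturality identities $(\theta^\alpha\boxtimes\mathbbm{1})\circ c^\alpha_{2,2}=c^\alpha_{2,2}\circ(\mathbbm{1}\boxtimes\theta^\alpha)$ and $(\mathbbm{1}\boxtimes\theta^\alpha)\circ c^\alpha_{2,2}=c^\alpha_{2,2}\circ(\theta^\alpha\boxtimes\mathbbm{1})$ established in Theorem~\ref{thm:ribboncat} — supplies precisely the missing bookkeeping. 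Your version is in fact slightly more complete than the paper's setup, since by checking all relations of $FB_n$ you also justify the paper's normalization that twists may be slid to the top of the braid.
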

\section{Examples and Computations}\label{sec:examples}
\subsection{Examples from compositions of binary quandles}

In this subsection it is showed that the notions of $G$-family of quandles and $G$-family of $2$-cocycles \cite{IIJO,Nos} provide interesting examples of compatible systems of TSD structures. An explicit example using Nosaka cocycles is also shown in detail. 

Recall first, that a $G$-family of quandles is a set $X$ along with a family of quandle operations $*^g$ indexed by a group $G$ (i.e. $g\in G$ for all $g$), and satisfying the compatibility conditions
\begin{eqnarray*}
(x*^gy)*^hy &=& x*^{gh}y,\\
(x*^gy)*^hz &=& (x*^hz)*^{h^{-1}gh}(y*^hz).
\end{eqnarray*}
Given a $G$-family of quandles one can construct a compatible system of ternary self-distributive operations as follows. Linearize the base set $X$ to obtain $V = \mathbbm k\langle X\rangle$. Then define maps $T_{gh} : V\otimes V\otimes V \longrightarrow V$ by the assignment $x\otimes y\otimes z \mapsto (x*^hy)*^{h^{-1}}z$. A direct computation shows that the system so defined is indeed compatible. Observe that  $T_{gg}$ being self-distributive is an instance of the fact that composing mutually distributive binary operations produces a ternary self-distributive operation, as in \cite{ESZ}.

\begin{definition}\label{def:Gfamilycompatible}
	{\rm 
The compatible system constructed above from the $G$-family of quandles $(X,*^g)_{g\in G}$ is called the {\it compatible system associated to a $G$-family}.	
}
\end{definition}

Since the main construction of Section~\ref{sec:ribboncat} associate a ribbon category to a compatible system of distributive structures by means of a system of $2$-cocycles, it is fundamental to obtain such objects for compatible system associated to $G$-families. As shown in the next result, it is possible to do so using the notion of $G$-family $2$-cocycles, see \cite{IIJO,Nos} for the definition of $G$-family (co)homology. In what follows it is assumed that the $X$-set $Y$ appearing in $G$-family cohomology is a singleton endowed with the trivial $G$-family action. The set $Y$ is therefore omitted without further notice, but this should not cause any difficulties. 

\begin{proposition}\label{prop:Gcocy}
	Let $(X,*^g)_{g\in G}$ and $(V,T_{gh})_{g,h\in G}$ be a $G$-family of quandles and the associated compatible system of self-distributive structures, respectively. Let $\theta$ be a $G$-family $2$-cocycle. Then there is an associated compatible system of $2$-cocycles $\theta_{gh}$ defined as follows
	$$
	\theta_{gh}(x\times y\times z) := \theta((x,e)\times (y,g)) + \theta((x*^gy,e)\times (z,h)).
	$$
\end{proposition}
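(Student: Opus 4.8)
The goal is to check that the family $\{\theta_{gh}\}$ satisfies the compatibility condition of Definition~\ref{def:compatible2cocy}. Writing the three indices as group elements $g,h,k$, this is the identity
$$
\theta_{gh}(x,y,z)+\theta_{gk}(T_{gh}(x,y,z),u,v)=\theta_{gk}(x,u,v)+\theta_{gh}(T_{gk}(x,u,v),T_{hk}(y,u,v),T_{hk}(z,u,v))
$$
for all $x,y,z,u,v\in V$, written additively to match the convention of $G$-family cohomology. By Remark~\ref{rmk:nontrivialcocy} this single family of identities already incorporates the diagonal case $g=h=k$, which is the ordinary TSD $2$-cocycle condition for $\theta_{gg}$, so there is nothing separate to verify there. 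The inputs I would use are the two $G$-family axioms $(x*^gy)*^hy=x*^{gh}y$ and $(x*^gy)*^hz=(x*^hz)*^{h^{-1}gh}(y*^hz)$, together with the $G$-family $2$-cocycle condition satisfied by $\theta$.

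First I would unfold everything to the binary level. Setting $\vartheta(a,b;g):=\theta((a,e),(b,g))$, the definition reads $\theta_{gh}(x,y,z)=\vartheta(x,y;g)+\vartheta(x*^gy,z;h)$, so each ternary weight is a sum of exactly two binary weights, and the ternary operation $T_{gh}$ is the corresponding two-fold composite of the binary quandle operations of the $G$-family. The key reformulation is that $\vartheta$ is, up to restriction to the locus where the first group-label is $e$, a quandle $2$-cocycle of the associated quandle $X\times G$ with $(x,a)*(y,b)=(x*^by,b^{-1}ab)$; since this crossing rule preserves that locus, the $G$-family $2$-cocycle condition becomes
$$
\vartheta(x,y;b)+\vartheta(x*^by,z;c)=\vartheta(x,z;c)+\vartheta(x*^cz,y*^cz;c^{-1}bc).
$$
Substituting the definitions into the compatibility identity then turns each of its four ternary terms into two binary $\vartheta$-terms whose arguments are iterated $*$-products of $x,y,z,u,v$.

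The plan is to read the left-hand side as the accumulated Boltzmann weight of the single strand $x$ passing in turn under $y,z,u,v$, and the right-hand side as the weight of $x$ passing under $u,v$ and then under the images $T_{hk}(y,u,v),T_{hk}(z,u,v)$; the two must agree because the product of weights is invariant under sliding $x$ across the crossing of the $(y,z)$-pair with the $(u,v)$-pair, i.e. under the Reidemeister III move, realized one binary crossing at a time. Concretely I would transform the left-hand side into the right-hand side by repeatedly applying the displayed binary cocycle condition, each application sliding $x$ past a single over-crossing, interleaving these with the $G$-family axiom $(x*^gy)*^hz=(x*^hz)*^{h^{-1}gh}(y*^hz)$ to rewrite the colors of the over-strands $y,z$ after each slide. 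After all slides $y$ and $z$ have been replaced precisely by $T_{hk}(y,u,v)$ and $T_{hk}(z,u,v)$, and the four binary weights on the two sides match termwise.

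The main obstacle is the bookkeeping of the group-element labels: each use of the cocycle condition introduces a conjugated group element of the shape $c^{-1}bc$, and one must verify that these conjugations are exactly those supplied by the $G$-family axiom, so that after all the slides the labels collapse back to the clean values $g,h,k$ appearing on the right-hand side, with the relation $(x*^gy)*^hy=x*^{gh}y$ absorbing the residual group factors. Once both sides are brought to a common normal form the equality is immediate. I expect no difficulty beyond this bookkeeping, since the underlying binary operations of the $G$-family were already arranged in \cite{ESZ} so that their composites are TSD.
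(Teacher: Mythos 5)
Your reduction --- unfolding each $\theta_{gh}$ into two binary weights $\vartheta(a,b;c)=\theta((a,e),(b,c))$ of the associated quandle and composing instances of the binary cocycle condition along Reidemeister~III slides --- is in substance the same route the paper takes, which reduces the claim to the labeled-cohomology $2$-cocycle condition and invokes Remark~5.7 of \cite{ESZ}. The problem is that the step you set aside as ``bookkeeping'' is precisely where the whole argument lives, and as you have set it up it fails. With your reading $T_{gh}(x,y,z)=(x*^gy)*^hz$, the under-passed ribbon in the term $\theta_{gk}(T_{gh}(x,y,z),u,v)$ carries the labels $(g,k)$, so sliding a crossing label $b$ past $u$ and then past $v$ conjugates it by $gk$, which is not trivial. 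Carrying out your four slides on the left-hand side yields
\[
\vartheta(x,u;g)+\vartheta(x*^gu,v;k)+\vartheta\bigl((x*^gu)*^kv,(y*^gu)*^kv;k^{-1}gk\bigr)+\vartheta\bigl(\cdots,(z*^gu)*^kv;k^{-1}g^{-1}hgk\bigr),
\]
whereas the right-hand side of the compatibility condition demands $\vartheta\bigl((x*^gu)*^kv,(y*^hu)*^kv;g\bigr)$ and $\vartheta\bigl(\cdots,(z*^hu)*^kv;h\bigr)$. Both the base points ($y*^gu$ versus $y*^hu$) and the labels ($k^{-1}gk$ versus $g$) disagree, and no cocycle identity reconciles them: in this form the identity is simply false for a generic non-abelian $G$-family. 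The axiom $(x*^gy)*^hy=x*^{gh}y$, which you invoke to absorb the residual conjugations, cannot help, since it applies only when the two over-strands carry the same element of $X$, and that never occurs in these slides.

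What actually makes the proposition work is the inverse-pair structure of the labels on the two edges of a ribbon, which is how the paper defines the compatible system in Definition~\ref{def:Gfamilycompatible}: $T_{gh}(x,y,z)=(x*^hy)*^{h^{-1}}z$, so the under-passed ribbon carries labels $(k,k^{-1})$ rather than $(g,k)$ (and correspondingly the weights must carry the matching labels, $\theta_{gh}(x,y,z)=\theta((x,e),(y,h))+\theta((x*^hy,e),(z,h^{-1}))$; the labels $(g,h)$ printed in the proposition do not follow the colour changes produced by $T_{gh}$, and a weight whose labels do not follow the diagram cannot satisfy any sliding identity). With inverse labels, each double slide conjugates first by $k$ and then by $k^{-1}$, so every label returns to its original value and $y$, $z$ land exactly on $T_{hk}(y,u,v)$, $T_{hk}(z,u,v)$; the four binary weights then match termwise after four applications of your displayed condition. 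Equivalently, $T_{\,\cdot\,k}(-,u,v)$ is an endomorphism of every operation $*^a$ precisely because the two labels are mutually inverse, and this is what converts the conjugated associated-quandle cocycle condition into the label-preserving condition of labeled cohomology on which the paper's proof rests. This cancellation is the key idea of the proposition, not bookkeeping, and it is absent from your argument.
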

\begin{remark}\label{rmk:Gcocy}
	{\rm 
A couple of observations are due. Firstly, notice that since $X$ acts trivially on $Y$, the two terms corresponding to deleting the first entry of $\theta$, according to the definition of $G$-family $2$-cocycle cancel each other, so that it is reasonable to arbitrarily choose on element of $G$ to label all the first entries $x$, where the obvious choice falls upon the neutral element $e$ of $G$. Secondly, there is a parallel between labels assignment in the definition of chain maps in ``labeled cohomology'' of Theorem~5.3 in \cite{ESZ}, particularly clear from Remark~5.7 in the same article, and the group element enriching $y$ or $z$ to a pair $(y,g)$ or $(z,h)$, respectively. 
}
\end{remark}

\begin{proof}[Proof of \ref{prop:Gcocy}]
	Using Remark~\ref{rmk:Gcocy}, the proof is almost immediate. In fact, the condition that $\theta_{gh}$ has to satisfy is (in additive notation)
	\begin{eqnarray*}
	\lefteqn{\theta_{fg}(x\times y\times z) + \theta_{fh}(T_{ij}(x\times y\times z)\times u\times v)}\\
	&=& \theta_{fh}(x\times u\times v) + \theta_{fg}(T_{fh}(x\times u\times v),T_{gh}(y\times u\times v)\times T_{gh}(z\times u\times v)).
\end{eqnarray*}
Using the definition of $\theta_{gh}$, the $G$-family $2$-cocycle condition becomes equivalent to labeled cohomology $2$-cocycle condition, since terms obtained by deleting $x$ cancel. Now the same proof as in Remark~5.7 of \cite{ESZ} can be applied, mutatis mutandis, to complete. 
\end{proof}

Let now $G = {\rm SL}(2;\Z_3)$ and $X = \Z_3\times \Z_3$, with operations $\{*^g\}_{g\in G}$ defined as follows \cite{IIJO}, $x*^gy := gx + (\mathbbm 1-g)y$ for all $x,y\in X$ and $g\in G$, where $G$ acts on $X$ by matrix multiplication on column vectors, and $\mathbbm 1$ is the identity of $G$. A direct computation shows that these operations define a $G$-family structure on $(X,G)$ (Proposition~2.3 in \cite{IIJO}). From this data, Nosaka has constructed \cite{Nos} a $G$-family $2$-cocycle, that has been employed in \cite{IIJO} to compute cocycle invariants of certain handlebody knots and distinguish them from their mirror images. As pointed out above, it is not restrictive to omit the singleton set $Y := \{y\}$ in the original construction. Define $\alpha : (X\times G)^{\times 2} \longrightarrow \Z_3$ by
$$
(x,g)\times (y,h) \mapsto \lambda(g) {\rm det}(x-y, (1-h)^{-1}y), 
$$
where $\lambda$ is the abelianization function, defined by $\lambda (A) = (a+d)(b-c)(1-bc)$, for a matrix 
$A := \left( \begin{array}{cc}  a&b\\ c&d \end{array}\right)$. Then $\alpha$ is a $G$-family $2$-cocycle and it follows that $\{\alpha_{gh}\}$ defined as in Proposition~\ref{prop:Gcocy} is a compatible system of $2$-cocycles.

\begin{example}
	{\rm 
Applying Theorem~\ref{thm:compatibleribboncat} to the compatible system of $2$-cocycles $\{\theta_{gh}\}_{g,h\in G}$ associated to Nosaka's $G$-family $2$-cocycle $\alpha$ via Proposition~\ref{prop:Gcocy} one obtains a ribbon category. The braiding at level $2$ is given explicitly by the maps
\begin{eqnarray*}
c^{\alpha_{gh}}_{2,2}(x\otimes y\boxtimes z\otimes w) &=& \alpha((x,e),(z,g)\alpha((x*^gz,e),(w,h))\\
&& \hspace{1cm} \alpha((y,e),(z,g)\alpha((x*^gz,e),(w,h))\\
&&\cdot z\otimes w\boxtimes (x+(h^{-1}-\mathbbm{1})z+(\mathbbm{1}-h)w)\otimes \\
&&\hspace{1cm} (y+(h^{-1}-\mathbbm{1})z+(\mathbbm{1}-h)w),
\end{eqnarray*}
while twisting morphisms are given by 
\begin{eqnarray*}
	\theta^{\alpha_{gh}} (x\otimes y)&=& \alpha((x,e),(x,g)\alpha((x,e),(y,h))\\
	&& \hspace{1cm} \alpha((y,e),(x,g)\alpha((y*^gx,e),(w,h))\\
	&& \cdot (h^{-1}x+ (\mathbbm{1}-h^{-1})y)\otimes ((h^{-1}-\mathbbm{1})x- h^{-1}y).
	\end{eqnarray*}
}
\end{example}

\subsection{Examples from heap structures}\label{subsec:heap}
Recall that given a group $G$, the heap operation $G\times G\times G\longrightarrow G$ defined by $x\times y\times z \mapsto xy^{-1}z$, defines a ternary quandle structure on $G$. Linearizing this assignment over a field $\mathbbm k$ produces a ternary quandle object in the category of vector spaces, where the diagonal map is induced by $x\mapsto x\otimes x\otimes x$. This definition in fact coincides with the quantum heap of the Hopf algebra structure on the group ring $\mathbbm k [G]$, as it can be seen directly. 

\begin{example}
	{\rm 
Let $\Z_2$ be the cyclic group of order $2$ and let $\mathbb C[\mathbb Z_2]$ be the structure defined above, obtained by linearizing the heap operation of $\Z_2$. The elements of $\mathbb Z_2$ are identified with the symbols $e_x$, with $x\in \mathbb Z_2$, generating the two dimensional vector space. A direct computation shows that $H^2(\Z_2,\Z_2) = \Z_2\oplus \Z_2$ with generators corresponding to the equivalence classes of characteristic functions $\chi_{(0,0,0)}$ and $\chi_{(0,1,1)}$. Fix the cocycle $(0,1)\in H^2(\Z_2,\Z_2) $, i.e. the map $\phi(x,y,z) = 1$ if $(x,y,z) = (0,1,1)$ and $\phi(x,y,z) = 0$ otherwise. Identifying $\Z_2$ with $(-1)^i\in \C^{\times}$ one gets a nontrivial cohomology class in $H^2(\Z_2,\C^{\times})$, still denoted by $\phi$. The corresponding twisting morphisms and braiding morphism are as follows
\begin{eqnarray*}
	\theta^{\phi} (e_x\otimes e_y) &=& (-1)^{\phi(x,x,y) + \phi(y,x,y)} \cdot e_{x+x+y}\otimes e_{y+x+y}\\
&=& e_y\otimes e_x,
 	\end{eqnarray*}
 that is, the twisting morphism is given by transposition, and 
 \begin{eqnarray*}
 	c^\phi(e_x\otimes e_y\boxtimes e_z\otimes e_w) &=& (-1)^{\phi(x,z,w)+\phi(y,z,w)} \cdot e_z\otimes e_w\boxtimes e_{x+z+w}\otimes e_{y+z+w}\\
 	&=& \begin{cases}
 		 e_z\otimes e_w\boxtimes e_x\otimes e_y\ \  z = w \\
 		e_z\otimes e_w\boxtimes e_{x+1}\otimes e_{y+1} \ \ {\rm otherwise}
 	\end{cases} 
 	\end{eqnarray*}
}
\end{example}

\begin{example}\label{ex:abelianheap}
	{\rm 
Let $X$ be the abelian group $\Z_n$ with group heap structure, and set $A = \Z$ taken with multiplicative notation, with generator $g$. Suppose $\rho$ is a given group character mapping $A$ in the group of units of $\mathbbm k$. In \cite{SZ}, Lemma~3.7, it is shown that the $2$-cochain $\phi_i: X^{\times 3} \longrightarrow \Z$ defined by the formula
$$
\phi_i := \sum_{x\in \Z_n} [\sum_{j=0}^{n-1} \chi_{(x,j,j+i)}],
$$
where $\chi_{(x,y,z)}$ is the characteristic function at the triple $(x,y,z)\in X^{\times 3}$ is a nontrivial $2$-cocycle for any choice of $i = 1, \ldots , n-1$. It is in fact thereby proved that $[\phi_i] \neq [\phi_k]$ in the second cohomology group, whenever $i\neq k$ in $\Z_n$. The ribbon category corresponding to $\phi$, for some choice of $n\in \N$ and $0 \neq i\in\Z_n$ is determined by braiding and twisting morphisms obtained as follows. For all $e_x,e_y,e_z,e_w\in \mathbbm k[X]$, the linearization of $X$ over $\mathbbm k$ coinciding with the group algebra of $\Z_n$, $c_{2,2}^{\phi_i}$ maps simple tensors according to the assignment 
\begin{eqnarray*}
c_{2,2}^{\phi_i}(x\otimes y\boxtimes z\otimes w) &=& \rho(g^{\phi_i(x,z,w)})\rho(g^{\phi_i(y,z,w)})e_z\otimes e_w\boxtimes e_{x-z+w}\otimes e_{y-z+w}\\
&=& \rho(g^{2\phi(x,z,w)})e_z\otimes e_w\boxtimes e_{x-z+w}\otimes e_{y-z+w}\\
&=& \begin{cases}
	\rho(g^2) e_z\otimes e_w\boxtimes e_{x+i}\otimes e_{y+i} \ \ {\rm if}\ w-z = i\\
	e_z\otimes e_w\boxtimes e_{x+k}\otimes e_{y+k}\ \ {\rm if}\ w-z=k\neq i
\end{cases}
\end{eqnarray*}
The twisting morphism $\theta_2^{\phi_i}$ maps simple tensors as
\begin{eqnarray*}
\theta_2^{\phi_i}(e_x\otimes e_y) &=& \rho(g^{\phi_i(x,x,y)})\rho(g^{\phi_i(y,x,y)})e_{y}\otimes e_{2y-x}\\
&=& \rho(g^{2\phi_i(x,x,y)})e_{y}\otimes e_{2y-x}\\
&=&\begin{cases}
	\rho(g^2) e_y\otimes e_{y+i} \ \ {\rm if}\ y-x = i\\
	e_y\otimes e_{y+k} \ \ {\rm if}\ y-x = k\neq i 
\end{cases}
\end{eqnarray*}
We note that the twisting morphism $\theta_2^{\phi_i}$ is determined, up to scalar multiplication, by the Takasaki quandle operation $x*y = 2y-x$ associated with the abelian group $\Z_n$. This is in fact a general feature of the twisting morphism of an abelian heap. It is also easy to see that for non-abelian heaps one obtains the core of a quandle, instead of the Takasaki structure. Generalization of the preceding braiding and twisting structure to the case of linear combinations of $\phi_i$'s is easily obtained from the previous equations. 
}
\end{example}

\begin{example}
	{\rm
Let $X = D_3$ be the dihedral group on $6$ elements, with presentation $\langle s,r\ |\ s^2 = r^3 =1,\ srs = r^{-1}\rangle$. Once again consider the group heap strucutre on $X$ and linearize it to obtain a quantum heap on the group ring $\mathbb C[X]$. Denote characteristic functions $\chi_{(x,y,z)}$, those cochains with coefficients in $\Z_3$ defined by $\chi_{(x,y,z)}(x',y',z') = 1$ if $(x',y',z') = (x,y,z)$, and $0$ otherwise. Define the $2$-cochain 
$$
\psi = \sum_x [\chi_{(x,1,r)}+\chi_{(x,r,r^2)}+\chi_{(x,r^2,1)}] + \sum_x [\chi_{(x,s,sr)}+\chi_{(x,sr,sr^2)}+\chi_{(x,sr^2,s)}].
$$
A direct computation shows that $\psi$ satisfies the $2$-cocycle condition and it is therefore a $2$-cocycle. moreover $\psi$ is nontrivial \cite{SZ}, Example~5.13 and Proposition~5.14, so that $[\psi] \neq 0$. Mapping $\Z_3$ to the $3^{\rm rd}$ roots of unity $G_3$ we obtain $\psi\in Z^2(\Z_3,G_3)$, where $G_3$ acts on $\C [\Z_3]$, and therefore on $\C [\Z_3]\otimes \C [\Z_3]$, by scalar multiplication. Twisting morphisms are obtained as
\begin{eqnarray*}
\theta^\psi (x\otimes y) &=& e^{\frac{2\pi i}{3}(\psi(x,x,y)+\psi(y,x,y))}\cdot y\otimes yx^{-1}y,
\end{eqnarray*}
where the scalar $e^{\frac{2\pi i}{3}(\psi(x,x,y)+\psi(y,x,y))}$ is nontrivial if and only if $x^{-1}y = r$, in which case we obtain $e^{\frac{2\pi i}{3}(\psi(x,x,y)+\psi(y,x,y))} = e^{\frac{4\pi i}{3}}$. Observe that $ yx^{-1}y$ is the core quandle operation. In fact, $\theta^\psi$ corresponds (up to the multiplying scalar) to the $R$ matrix obtained by linearizing the set-theoretic solution of Yang-Baxter equation corresponding to the core of group $D_3$. The braiding $c_{2,2}^\psi$ is given explicitly by 
\begin{eqnarray*}
c^\psi_{2,2}(x\otimes y\boxtimes z\otimes w) &=& e^{\frac{2\pi i}{3}(\psi(x,z,w)+\psi(y,z,w))}  \\
&&\hspace{1cm} \cdot z\otimes w \boxtimes xz^{-1}w\otimes yz^{-1}w,
\end{eqnarray*}
where the scalar multiple is nontrivial if and only if $z^{-1}w = r$, in which case we have $e^{\frac{2\pi i}{3}(\psi(x,z,w)+\psi(y,z,w))} = e^{\frac{4\pi i}{3}}$, similarly to the case of twisting. 
}
\end{example}

\subsection{Computations of quantum invariants}

In this subsection we illustrate the theory by computing some invariants of a few basic framed links corresponding to linearized ternary self-distributive structures and $2$-cocycles given above. We compare with the invariants computed in \cite{SZ}. In fact it will be apparent how the two types of invariants encode the same information, as proved in Theorem~\ref{thm:quantum}.

\begin{example}
	{\rm 
Let us consider the unknot with frame $n\in \N$. Let $H = \Z_m$ for some arbitrary $m$, considered with abelian heap structure, i.e. $T(x,y,z) = x- y +z$. As already argued in Example~\ref{ex:abelianheap}, for each $i = 1, \ldots , m-1$ there exists a nontrivial cocycle with coefficients in $\Z$ given by $\phi_i := \sum_{x\in \Z_m} [\sum_{j=0}^{m-1} \chi_{(x,j,j+i)}]$. Let us fix an arbitrary $i$ and let $\zeta\in \C$ denote a primitive $m^{\rm th}$-root of unity. We assume $\Z$ to be generated (in multiplicative notation) by $g$, and define the character $\tilde \chi: \Z \longrightarrow \C^\times$ by reducing the exponents of $g$ modulo $m$ and identifying it with the corresponding power of $\zeta$. The map that is associated to the $n$-framed unknot is $\theta^n$, where we set $\theta := \theta^{\phi_i}$ for simplicity. We denote the basis vectors of the vector space $X = \mathbbm k\langle H\rangle$ generated by $H$ by the symbols $e_x$, for $x\in H$. Then $\theta^n$ is easily seen to be given by the map $e_x\otimes e_y \mapsto q(x,y)\cdot e_{ny-(n-1)x}\otimes e_{(n+1)y - nx}$, where the unit $q(x,y)\in \C^\times$ is determined below. For a vector to contribute to the trace of $\theta^n$ we need $e_x = e_{ny-(n-1)x}$ and $e_y = e_{(n+1)y - nx}$, which gives $n(y-x) = 0$. If $(n,m) = 1$ then there are $| X |$ vectors that satisfy this condition. We have that $tr_q (\theta^n) = \mid X \mid$, since $i\neq 0$. Observe that the condition of $e_x\otimes e_y$ contributing to the trace of $\theta^n$ can be rephrased as $x,y$ giving a coloring of the diagram of the $n$-framed unknot. So, when choosing $m$ coprime with $n$ the invariant is simply counting colorings. If $(n,m) = d\neq 1$, then we have $d$ elements $\alpha$ divisible by $m/d$. Each of these solutions gives $|H| = m$ vectors that contribute nontrivially to the trace of $\theta^n$. Moreover, whenever $i = \alpha$ for one of the previous solutions, we have a contribution of $\zeta^{2n}$. We have
$$
tr_q(\theta^n) = \begin{cases}
dm  \ \ \ (i,m/d) \neq m/d\\
(d-1)m + \zeta^{2n} m \ \ \ (i,m/d) = m/d
\end{cases}
$$
The cocycle invariant $\Psi$, by direct computation, is seen to be equal to $m e\otimes e$ when $m$ and $n$ are coprime, and 
$$
\Psi = \begin{cases}
md\cdot  e\otimes e \ \ \ (i,m/d) \neq m/d\\
m( d-1)\cdot e\otimes e + mg^n\otimes g^n \ \ \ (i,m/d) = m/d
\end{cases}
$$
when $(m,n) = d$. We see that applying $\chi$ to $\Psi$ we obtain $tr_q(\theta^n)$, as required. Moreover, we can choose $m$ and $i$ such that $tr_q(\theta^n)  = dm + \zeta^{2n} m$ and $\zeta^{2n} \neq 1$, so the invariant detects twisting. 
}
\end{example}

\begin{example}
	{\rm 
	Let us now consider the torus link $T(2,2n)$ on two strings, with even number of crossings. We compute the quantum invariant corresponding to the cocycle $\phi_i\in Z^2(\Z_m,\Z)$, for some $i = 1, \ldots, n-1$. Set $X = \C\langle \Z_m\rangle$. The framed braid whose closure gives $T(2,2n)$ corresponds to the endomorphism $c^{2n} : X^{\otimes 2} \boxtimes X^{\otimes 2} \longrightarrow X^{\otimes 2} \boxtimes X^{\otimes 2}$, obtained by composing the braiding of two ribbons $2n$ times. We use the symbol $\boxtimes$ to distinguish pairs corresponding to the two edges of a ribbon, following previous conventions. We choose, as before, an integer $m$ and a primitive $m^{\rm th}$-root of unitiy $\zeta$ and we use again the map $\tilde \chi$ that sends a generator, say $g$, of $\Z$ in multiplicative notation to $\zeta$. On basis vectors we have, by direct computation, $c^{2n}(e_x\otimes e_y \boxtimes e_z\otimes e_w) = q(x,y,z,w)\cdot e_{x+n(w-z)}\otimes e_{y+n(w-z)}\boxtimes e_{z+n(y-x)}\otimes e_{w+n(y-x)} = q(x,y,z,w)\cdot  e_x\otimes e_y \boxtimes e_z\otimes e_w$, where $q(x,y,z,w)\in \C^\times$ is determined as follows. We shorten notation by omitting the variables, therefore writing just $q$. When $y-x = w-z = i$ we have $q = \zeta^{4n}$, when just one of $w-z$ or $y-x$ equals $i$, $q = \zeta^{2n}$, while $q = 1$ whenever both $w-z$ and $y-x$ are different from $i$. Then $c^{2n}$ is diagonal and each vector contributes to the trace so, to complete the computation, one needs just to count how many vectors contribute with either of the weights $q$ given above. We have $tr_q(c^{2n}) = n^2\zeta^{4n} + 2n(n-1) \zeta^{2n} + n^4 + n$. Observe that the computation parallels perfectly that of Example~5.12 in \cite{SZ}, where the fact that each initial arc coloring defines a full coloring of $T(2,2n)$ translates into the statement that $c^{2n}$ is diagonal, and where in the cocycle invariant the weights contribute to the entries of the tensor product of the group algebra $\Z_n[\Z]$ depending on the components of the link. 
}
\end{example}
\section{Generalized construction}\label{sec:generalized}

In this section, we develop a generalized version of the theory described above, where we consider symmetric monoidal categories. To this objective, we first need to provide a framework for ternary self-distributivity in symmetric monoidal categories and introduce a categorical version of $2$-cocycle condition. The first construction has been introduced by the author, along with M. Elhamdadi and M. Saito, in \cite{ESZ}, while the second construction will be introduced herein. 

Our main interest in generalizing the construction from the linearized case described above to more general objects in symmetric monoidal categories lies in the following discussion. When computing 
the quantum invariant associated to a TSD set we see that a vector contributes to the invariant, i.e. to the (quantum) trace of the linear map $X^{\otimes 2n} \longrightarrow X^{\otimes 2n}$ associated to a framed braid diagram with n doubled-strings, if the coloring condition on the edges of the diagram is satisfied. This is due to the fact that the comultiplication of $X$ is simply given by producing two copies of an element of $X$. For a general TSD object, say in the category of vector spaces, the comultiplication is usually not the same as $x\mapsto x\otimes x$, so that there can be contributions to the quantum trace that do not correspond to colorings of the framed diagram by $X$. We expect that this phenomenon produces stronger invariants than the ribbon cocycle invariant. In fact, while a trivial cocycle does not produce new non-trivial invariants in the linearized case (it simply counts the colorings of a diagram, which is a known invariant), with a general comultiplication we can obtain nontrivial invariants corresponding to the YB operator associated to the TSD object even when this is not deformed by a nontrivial cocycle. 

Let $\mathcal C$ be a symmetric monoidal category, with tensor functor denoted by $\otimes$ and let $X$ be a fixed object in $\mathcal C$. Then, associating the switching morphism $\tau: X^{\otimes 2} \longrightarrow X^{\otimes 2}$ to the transposition $(1 2)\in \mathbb S_2$ we obtain a ``representation'' of the infinite symmetric group $\mathbb S_{\infty}$ as follows. Let $\sigma\in \mathbb S_n$ for some $n\in \mathbb N$, we decompose the permutation in a product of transpositions $\sigma = \sigma_1\cdots \sigma_k$ for some $k$. Then we define the corresponding automorphism of $X^{\otimes n}$ to be $\tau_1\circ \cdots \circ \tau_k$, where $\tau_i : X^{\otimes n} \longrightarrow X^{\otimes n}$ is the automorphism given by $\mathbbm 1^{i-1} \otimes \tau_{X,X}\otimes \mathbbm 1^{n-i+1}$. Then it is verified that this assignment does not depend on the choice of decomposition of a permutation into transpositions, and it is therefore well defined. We obtain a correspondence $\shuffle : \mathbb S_{\infty} \longrightarrow \cup_n {\rm Hom}(X^{\otimes n},X^{\otimes n})$, and $\shuffle_\sigma$ is the automorphism of $X^{\otimes n}$ that $\shuffle$ associates to the permutation on $n$ elements $\sigma$. In the rest of the section we will say that $\shuffle_\sigma$ is the morphism corresponding to the permutation $\sigma$. 
\subsection{TSD objects in symmetric monoidal cateogries}
Let $(\mathcal C, \otimes, \tau)$ be a symmetric monoidal category and let $X$ be a ternary self-distributive (TSD) object in $\mathcal C$, i.e. a comonoid object with a morphism $T: X\otimes X\otimes X \longrightarrow X$ satisfying categorical self-distributivity and commuting with the diagonal morphism $\Delta_3 := \Delta\circ (\Delta\otimes \mathbbm 1)$. Specifically, categorical self-distributivity means that $X$ is endowed with a morphism $\Delta : X \longrightarrow X\otimes X$ which makes the same diagrams of coassociativity commute, a morphism $\epsilon : X\longrightarrow \mathbbm k$ which satisfies the same diagrams of counit in a coalgebra, and the morphism $T$ makes the following diagram commute (case $n=3$ in \cite{ESZ})
\begin{center}
	\begin{tikzcd}
		&X^{\otimes 9}\arrow{dl}[swap]{\shuffle_t} & &X^{\otimes 5}\arrow{ll}[swap]{\mathbbm 1^{\otimes 3}\otimes \Delta_3^{\otimes 2}}\arrow {rd}{T\otimes \mathbbm{1}^{\otimes 2}} &  \\
		X^{\otimes 9}\arrow{dd}[swap]{T\otimes T \otimes T} & & & & X^{\otimes n}\arrow{dd}{T}\\
		& & & &\\
		X^{\otimes 3}\arrow{rrrr}[swap]{T}& & & &X 
	\end{tikzcd} 
\end{center} 
where $\Delta_3 := (\Delta\otimes \mathbbm{1})\circ \Delta = (\mathbbm{1}\otimes \Delta)\circ \Delta$, and $\shuffle_t$ is the shuffle corresponding to ternary self-distributivity 
$$
t =  (2,4)(3,5)(6,8)(3,7).
$$
 In the rest of the article we will denote $\Delta_n := (\Delta\otimes \mathbbm 1^{n-1})\circ\cdots \circ(\Delta\otimes \mathbbm 1)\circ\Delta : X \longrightarrow X^{\otimes n}$, to indicate the $n$-fold diagonal of the comonoid object $X$. 


\begin{remark}
	{\rm 
In what follows we will assume our TSD objects to be cocommutative (as comonoids), as the main proof of the section will make use of this assumption. We point out that the preliminary definitions and results make sense without this further assumption, unless otherwise specified. 
}
\end{remark}

The final objective of this section is to generalize the construction of ribbon category from a TSD operation to the case of TSD objects in symmetric monoidal categories. When we linearize a TSD operation $T$ over a field (or ring), as we have seen in Section~\ref{sec:ribboncat}, the rack axiom of $T$ which states that $T(x,y,z) = d$ has a solution for $x$ for every $d$, once we fix two elements $y,z$, automatically provides a way of defining an inverse to the braiding morphisms. In the setting of symmetric monoidal categories, the definition of TSD object given above does not provide any condition guaranteeing the possibility of introducing an inverse to the braiding morphism. The next definition provides an answer to this issue. 
\begin{definition}
	{\rm 
Let $X$ be a TSD object in a symmetric monoidal category $\mathcal{C}$. Then a {\it ternary rack} in $\mathcal{C}$ is a comonoid object $X$ in $\mathcal{C}$ together with a pair of morphisms $T, T^{-1}: X^{\otimes 3} \longrightarrow X$ satisfying the TSD condition given above, along with the equation
$$
T^{-1}\circ [T\otimes \mathbbm 1^{\otimes 2}] \circ \mathbbm 1\otimes \Delta\otimes \Delta = \mathbbm 1 \otimes \epsilon\otimes \epsilon,
$$	
where equality is meant as an equality of morphisms $X^{\otimes 3} \longrightarrow X$ in $\mathcal C$. We also require an analogous equation to be satisfied, where the roles of $T$ and $T^{-1}$ are exchanged.
}
\end{definition}

\begin{example}\label{ex:qheap}
	{\rm 
The fundamental example of TSD object in the symmetric monoidal category of vector spaces is that of an involutory Hopf algebra with {\it quantum heap} operation. This is a ``categorical'' version of the notion of heap of a group whose cocycle invariants have been introduced and studied in \cite{SZ}. The quantum heap operation is given by extending the operation $x\otimes y \otimes z \mapsto xS(y)z$ by linearity. A similar construction holds replacing a Hopf algebra by a Hopf monoid in a symmetric category $\mathcal C$. This means that $H$ is a bimonoid, i.e. an object that is both monoid and comonoid, and it is endowed with a morphism $s: H\longrightarrow H$ that satisfies the same commuative diagrams for the antipode as in the usual definition of Hopf algebra. It has been proved in \cite{heap} that an involutory Hopf monoid gives rise to a TSD object by a generalization of the quantum heap construction (see Theorem~7.12 therein). Both vector space and symmetric monoidal category versions of the proofs utilize the fact that 
$$
T\circ [T\otimes \mathbbm 1^{\otimes 2}]\circ[ \mathbbm 1\otimes \hat\tau]\circ\mathbbm{1} \otimes \Delta \otimes \Delta = \mathbbm 1\otimes \epsilon\otimes \epsilon,
$$
where $\hat \tau$ is the morphism $X^{\otimes 4}\longrightarrow X^{\otimes 4}$ corresponding to the permutation  $\bigl(\begin{smallmatrix}
1 & 2& 3& 4\\
1&3&4&2
\end{smallmatrix}\bigr)$. See for instance the first part of the proof of Proposition~7.10, and Lemma~7.11 in \cite{heap}. If $X$ is the heap object corresponding to an involutory Hopf monoid in $\mathcal C$, i.e. a TSD object, we have a ternary rack object in $\mathcal C$ by taking $T^{-1}$ to be $T\circ [\mathbbm 1\otimes \tau]$, where $\tau$ denotes the switching morphism of $\mathcal C$. Hopf algebras (or monoids) naturally give rise to TSD objects as well as ternary racks. More generally, one can replace the notion of involutory Hopf monoid by the more general one of (categorical) heap, of which Hopf monoids provide a fundamental example. See Definition~7.1 in \cite{heap}. 
}
\end{example}

\subsection{Examples of TSD objects}

There is a very natural situation in which TSD objects arise in a symmetric monoidal category, as described in the following example, which generalizes Example~\ref{ex:qheap}

\begin{example}\label{ex:hopfmonoid}
	{\rm 
		We describe explicitly the quantum heap TSD morphism in symmetric monoidal categories mentioned above.
		Let $X$ be an involutory Hopf monoid in $\mathcal C$. Then it has been shown in \cite{heap} that $X$ can be endowed with a morphism $T: X^{\otimes 3} \longrightarrow X$ that turns it into a TSD object in $\mathcal C$ as follows. 
		Set $T := \mu \circ (\mu \otimes \mathbbm 1)\circ (\mathbbm 1 \otimes s\otimes \mathbbm 1)$, where $\mu$ is the multiplication morphism and $s$ is the (involutive) antipode in $X$. Observe that when $X$ is a Hopf algebra in the category of vector spaces, we have that $(X,T)$ coincides with the quantum heap of Example~\ref{ex:qheap}. In fact, if in addition $X$ is the group algebra of some group $G$, then $T$ is the linearization of set-theoretic heap structure of $G$, as in Subsection~\ref{subsec:heap}, so that this is a natural generalization of objects already encountered. These structures will also be referred to as {\it quantum heaps}, as in the vector space case. 
	}
\end{example} 

\begin{example}\label{ex:qconjugation}
	{\rm 
	Let $X$ denote a Hopf monoid in $\mathcal C$. Then, we can define an operation akin to conjugation in a group, by means of the antipode $S$ of $X$. We define $q = \mu\circ (\mathbbm 1\otimes \mu) \circ  (S\otimes \mathbbm 1^{\otimes 2}) \shuffle \circ(\mathbbm 1\otimes \Delta)$, where $\shuffle$ corresponds to the transposition $(12)\in \mathbb S_3$. In the category of vector spaces or modules, i.e. when dealing with a Hopf algebra in the usual sense, this operation takes the form $x\otimes y\mapsto S(y^{(1)})xy^{(2)}$, where juxtaposition denotes multiplication in $X$. We refer to this operation as {\it quantum conjugation}. Iterating the operation $q$, i.e. defining $T = q\circ (q\otimes \mathbbm 1)$ we obtain a ternary operation, called {\it double quantum conjugation}. In \cite{ESZ} Section~8, it is seen that $q$ is binary self-distributive, and it is also proven that composing binary self-distributive operations yields a TSD operation. 
}
\end{example}

\begin{example}\label{ex:klinear}
	{\rm 
		Let $\mathcal C$ denote the category of vector spaces over a field $\mathcal k$, and let $L$ denote a Lie algebra. Set $X = \mathbbm k\oplus L$ and, denoting its elements by pairs $(a,x)$, we can define a coproduct on $X$ by the assingment 
		$$
		(a,x) \mapsto (a,x)\otimes (1,0) + (1,0)\otimes (0,x),
		$$
		and counit $\epsilon (a,x) = a$. It is easy to see that this structure defines a coalgebra in $\mathcal C$.  Then, we apply Example~8.8 in \cite{ESZ} defining $T$ by iteration of Lie bracket structure on basis vectors $(a,x)$ by the assignment
		$$
		(a,x)\otimes (b,y)\otimes (c,z) \mapsto (abc, bcx + c[x,y] + b [x,z] + [[x,y],z]).
		$$ 
		Then $T$ turns $X$ into a TSD object in the category of vector spaces (proof in \cite{ESZ}, Theorem~8.6 or Appendix A).  
	}
\end{example}

In fact, it can be proved that the TSD structures of Example~\ref{ex:klinear} are invertible, and therefore are ternary racks. We will not consider these objects in detail, leaving the proof of the previous claim to subsequent work, as they are not cocommutative. 
 
\subsection{Categorical 2-cocycle condition}

	Let us now consider an $\mathbb I$-linear symmetric monoidal category $\mathcal C$, where $\mathbb  I$ denotes the unit object of $\mathcal C$. Suppose $X$ is a unitary comonoid object in $\mathcal C$. This means that $X$ is endowed with morphisms $\Delta : X\longrightarrow X\otimes X$, $\epsilon : X\longrightarrow \mathbb I$ and $\eta: \mathbb I\longrightarrow X$ that make  commute the diagrams defining a unitary coalgebra in the category of vector spaces. In this situation, we say that a morphism $\alpha: X\longrightarrow \mathbb I$ is {\it convolution invertible} if there exists a morphism $\alpha^{-1}$ making the following diagram commute
	\begin{center}
	\begin{tikzcd}
		X^{\otimes 3}\arrow[rr,"\shuffle \Delta^{\otimes 3}"]\arrow[d,swap,"\epsilon^{\otimes 3}"] && X^{\otimes 3}\otimes X^{\otimes 3}\arrow[d,"\alpha\otimes \alpha^{-1}"]\\
		\mathbb I\arrow[rr,equal]&& \mathbb I\otimes \mathbb I
		\end{tikzcd}
	\end{center}
	where $\shuffle$ reorders the outputs of $\Delta^{\otimes 3}$ as in the comultiplication of a tensor coalgebra structure.  
	\begin{definition}\label{def:catcocy}
	{\rm
Let $X$ be a TSD object in a symmetric monoidal category. Then a convolution invertible morphism $\alpha$ is called {\it categorical 2-cocycle} with coefficients in $\mathbb I$ if the diagram 
 \begin{center}
 	\begin{tikzcd}
 		X^{\otimes 5}\arrow[rr,"\shuffle_1\circ(\Delta^3\mathbbm 1^2)"]\arrow[d,swap,"\shuffle_2\circ(\Delta\mathbbm 1^2\Delta_2^2)"] && X^{\otimes 8}\arrow[rr,"\alpha\alpha\circ(\mathbbm 1^3T\mathbbm 1^2)"]&&\mathbb I^{\otimes 2}\arrow[d,equal]\\
 	X^{\otimes 12}\arrow[rr,swap,"\alpha\alpha\circ (\mathbbm 1^3T^3)"]	&&\mathbb I^{\otimes 2}\arrow[rr,equal]&& \mathbb I
 	\end{tikzcd}
 \end{center} 
commutes, where $\shuffle_1 = (2,4)(3,5)$ and $\shuffle_2 = (2,4)(3,5)(6,8)(3,7)$, and we have shortened notation by omitting tensor product of morphisms, which has been indicated as juxtaposition. We also assume that $2$-cocycles are normalized, in the sense that $\alpha \circ (\eta \otimes \mathbbm 1^{\otimes 2}) = \alpha \circ (\mathbbm 1\otimes \eta \otimes \mathbbm 1) = \alpha \circ (\mathbbm 1^{\otimes 2}\otimes \eta) = \epsilon\otimes \epsilon$. 
}
\end{definition}
We now illustrate the prevoius definitions in the category of vector spaces, using Sweedler notation to denote comultiplication. We will give examples of the above structures later in the section, while we just assume in the following example that such objects exist in the category of vector spaces. 
\begin{example}\label{ex:vector2cocy}
	{\rm 
		Suppose that $\mathcal C$ is the category of vector spaces over some ground field $\mathbbm k$, and $X$ is as above. We want to show how the commutativity of categorical $2$-cocycle diagram translates in $\mathcal C$. Using Sweedler notation, on a basis vector $x\otimes y\otimes z\otimes u\otimes v$ we  get
		\begin{eqnarray*}
			&&	\alpha(x^{(1)}\otimes y^{(1)}\otimes z^{(1)})\cdot\alpha(T(x^{(2)}\otimes y^{(2)}\otimes z^{(2)})\otimes u\otimes v) \\
			&=&\alpha(x^{(1)}\otimes u^{(1)}\otimes v^{(1)})\\
			&&\hspace{1cm} \cdot \alpha(T(x^{(2)}\otimes u^{(2)}\otimes v^{(2)})\otimes T(y\otimes u^{(3)}\otimes v^{(3)})\otimes T(z\otimes u^{(4)}\otimes v^{(4)}))
		\end{eqnarray*}	
		where $\cdot$ indicates multiplication in $\mathbbm k$. 
	}
\end{example}

The fact that $\alpha$ is convolution invertible in the ``coefficients'' object $\mathbb I$ plays a fundamental role in constructing inverses in the general construction, as it will be seen below.

Lastly, we define an equivalence relation between categorical $2$-cocycles. 
\begin{definition}
	{\rm 
Let $\alpha$ and $\beta$ denote two categorical $2$-cocycles. We say that $\alpha$ and $\beta$ are equivalent if there exists a convolution invertible morphism $f: X\longrightarrow \mathbb I$ such that the following diagram 
\begin{center}
	\begin{tikzcd}
		X\otimes X\otimes X\arrow[rr,"\shuffle \circ \Delta^{\otimes 2}"] \arrow[d,swap,"(234)\circ (\Delta\otimes \mathbbm 1^{\otimes 2})"]& & X^{\otimes 3}\otimes X^{\otimes 3}\arrow[d,"\alpha\otimes (f\circ T)"]\\
		X^{\otimes 3}\otimes X\arrow[rr,"\beta\otimes f"] & & \mathbb I
		\end{tikzcd}
\end{center}	
commutes.
}
\end{definition}

We show the previous definition in the category of vector spaces.

\begin{example}
	{\rm 
Let $(X,T)$ be a TSD object in the category of vector spaces. Then two categorical $2$-cocycles $\alpha$ and $\beta$ are equivalent, by definition, if it holds $\alpha(x^{(1)}\otimes y^{(1)}\otimes z^{(1)})f(T(x^{(2)}\otimes y^{(2)}\otimes z^{(2)}))	= \beta(x^{(1)}\otimes y\otimes z)f(x^{(2)})$, for some $f$, for all $x,y,z\in X$. 
}
\end{example}
\subsection{Examples of categorical 2-cocycles}
We still need to provide examples of categorical $2$-cocycles, as per Definition~\ref{def:catcocy}. We begin by observing that the setting of linearized operations and set-theoretic ternary $2$-cocycles of Section~\ref{sec:ribboncat} provides first examples of such morphisms. 

\begin{example}\label{ex:groupcocy}
	{\rm 
	Let $Q$ be a ternary (set-theoretic) quandle and let $G$ be a (multiplicative) group. Suppose that $\alpha$ is a $2$-cocycle, i.e. $\alpha\in Z^2(Q,G)$. As in Section~\ref{sec:ribboncat} we let $X$ denote the linear space generated by the elements of $Q$ and define $T$ to be the linearized operation defined from the set-theoretic one of $Q$. Let $\chi: G\longrightarrow \mathbbm k^{\times}$ denote a group character. We define a linear map $\tilde \alpha : X\otimes X\otimes X \longrightarrow \mathbbm k$ by the assignment on simple vectors $x\otimes y\otimes z\mapsto \chi\alpha(x,y,z)$. Let us verify that $\tilde \alpha$ is indeed a categorical $2$-cocycle. Since the diagonal in $X$ is induced by the set-theoretic diagonal $x\mapsto x\times x$, applying Example~\ref{ex:vector2cocy} it is enough to verify
			\begin{eqnarray*}
			&&	\tilde \alpha(x\otimes y\otimes z)\cdot\tilde \alpha(T(x\otimes y\otimes z)\otimes u\otimes v) \\
			&=&\tilde \alpha(x\otimes u\otimes v)
			 \cdot \tilde \alpha(T(x\otimes u\otimes v)\otimes T(y\otimes u\otimes v)\otimes T(z\otimes u\otimes v))
		\end{eqnarray*}	
	which holds true using the definition of $\tilde \alpha$ and $T$ as linearizations of set-theoretic structures. 
}
\end{example}

We consider now, temporarily, the binary version of the previous constructions. Let us set $\mathcal C$ to be the category of vector spaces over a ground field $\mathbbm k$. Recall, in this case, that a unital coalgebra $(X,\Delta)$ of $\mathcal C$ along with a morphism of coalgebras $q : X\otimes X\longrightarrow X$ is said to be (binary) self-distributive if the equality
$$
q(q(x\otimes y)\otimes z) = q(q(x\otimes z^{(1)})\otimes q(y\otimes z^{(2)})),
$$
holds for all $x,y,z\in X$. The (binary) categorical $2$-cocycle condition for a convolution invertible morphism $\sigma: X\otimes X\longrightarrow \mathbbm k$ is readily introduced as follows
$$
\alpha(x^{(1)}\otimes y^{(1)}) \alpha(q(x^{(2)}\otimes y^{(2)})\otimes z) = \alpha(x^{(1)}\otimes z^{(1)}) \alpha(q(x^{(2)}\otimes z^{(2)})\otimes q(y\otimes z^{(3)})).
$$ 
As previously observed, in \cite{ESZ} Section~8, it has been proven that composing binary self-distributive operations yields TSD operations. In the set-theoretic case, moreover, it is shown that binary $2$-cocycles can be used to construct ternary $2$-cocycles. We want to use a vector space version of the set-theoretic result to obtain examples of (ternary) categorical $2$-cocycles. We consider the cocommutative case for simplicity. 
\begin{lemma}\label{lem:cocyclecomposition}
	In the setting above, suppose that $\alpha$ is a binary $2$-cocycle. Then, defining $\psi(x\otimes y\otimes z) := \alpha(x^{(1)}\otimes y^{(1)})\alpha(q(x^{(2)}\otimes y^{(2)})\otimes z)$, it follows that $\psi$ is a ternary $2$-cocycle for the doubled TSD operation $T= q\circ (q\otimes \mathbbm 1)$.
\end{lemma}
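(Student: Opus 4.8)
The plan is to verify directly the ternary categorical $2$-cocycle condition of Example~\ref{ex:vector2cocy}, specialized to $T = q\circ(q\otimes \mathbbm 1)$ and to $\psi$ as defined, by reducing it to a chain of applications of the binary $2$-cocycle condition. Working in Sweedler notation, the structural facts I would keep at hand are: the binary $2$-cocycle identity for $\alpha$ (as stated just before the lemma), the binary self-distributivity of $q$, the fact that $q$ is a morphism of coalgebras (so comultiplication passes through $q$, i.e. $\Delta(q(a,b)) = q(a^{(1)},b^{(1)})\otimes q(a^{(2)},b^{(2)})$), and the cocommutativity of $X$, which permits relabeling and permuting the coproduct legs freely. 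I would first record that $\psi$ is automatically convolution invertible and normalized: it is the convolution product of $\alpha\circ(\mathbbm 1\otimes\mathbbm 1\otimes\epsilon)$ and $\alpha\circ(q\otimes\mathbbm 1)$, both convolution invertible morphisms $X^{\otimes 3}\to\mathbb I$, and these form a group under convolution; normalization follows at once from that of $\alpha$.

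Substituting $\psi(a,b,c) = \alpha(a^{(1)}, b^{(1)})\,\alpha(q(a^{(2)}, b^{(2)}), c)$ into both sides of the identity to be proved, namely
\[
\psi(x^{(1)}, y^{(1)}, z^{(1)})\,\psi\big(T(x^{(2)}, y^{(2)}, z^{(2)}), u, v\big)
= \psi(x^{(1)}, u^{(1)}, v^{(1)})\,\psi\big(T(x^{(2)}, u^{(2)}, v^{(2)}), T(y, u^{(3)}, v^{(3)}), T(z, u^{(4)}, v^{(4)})\big),
\]
and expanding every $T$ as $q(q(-,-),-)$ while distributing the residual comultiplications through the $q$'s via the coalgebra-morphism property, turns each side into a product of four factors of $\alpha$ evaluated on iterated $q$-expressions.

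The heart of the proof is the rewriting that carries the left-hand side to the right-hand side, which mirrors the set-theoretic computation. First I would apply the binary cocycle identity to $\alpha(x^{(1)},y^{(1)})\,\alpha(q(x^{(2)},y^{(2)}),z^{(1)})$ to trade $y$ for $z$, and use self-distributivity to rewrite $q(q(x,y),z)$ as $q(q(x,z),q(y,z))$. Then I would alternately apply the binary cocycle identity and self-distributivity so as to transport the coproduct legs of $u$, and subsequently of $v$, successively inward, invoking cocommutativity at each stage to realign the Sweedler indices that the cocycle identity demands. After these steps the first two $\alpha$-factors collapse to $\alpha(x^{(1)},u^{(1)})\,\alpha(q(x^{(2)},u^{(2)}),v)$, which is exactly $\psi(x,u,v)$; a final single application of the binary cocycle identity to the remaining two factors, whose arguments are $q(q(x,u),v)$, $q(q(y,u),v)$ and $q(q(z,u),v)$, produces precisely $\psi(T(x,u,v),T(y,u,v),T(z,u,v))$.

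I expect the main obstacle to be bookkeeping rather than conceptual: one must track how many coproduct copies of $u$, $v$ and of each intermediate $q$-term are generated and consumed, and check that the coalgebra-morphism property supplies exactly the legs needed to invoke the binary cocycle identity at each stage, while cocommutativity guarantees those legs can be permuted into the required order. This alignment of comultiplications is invisible in the set-theoretic case, where $\Delta$ is diagonal, and it is exactly what forces the cocommutativity hypothesis assumed before the lemma; making it precise is the only delicate point in the argument.
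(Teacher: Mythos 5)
Your proposal is correct and takes essentially the same route as the paper: the paper's proof simply defers to the set-theoretic computation of Remark~5.7 in \cite{ESZ} (an alternating chain of binary $2$-cocycle and self-distributivity applications) ``with extra care on Sweedler superscripts,'' and notes that convolution invertibility of $\psi$ requires $q$ to be a coalgebra morphism --- exactly the two ingredients you identify and carry out explicitly, with cocommutativity handling the leg realignment. Your packaging of invertibility as a convolution product of the two invertible factors $\alpha\circ(\mathbbm 1\otimes\mathbbm 1\otimes\epsilon)$ and $\alpha\circ(q\otimes\mathbbm 1)$ is a clean way of making the paper's one-line remark precise.
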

\begin{proof}
	The proof of this fact follows the same lines of Remark~5.7 in \cite{ESZ}, where the cocycles introduced in Remark~5.6 are assumed to coincide, and the binary operations are taken both to be $q$. The geometric intepretation of the various applications of the $2$-cocycle conditions are depicted as in Figure~3 in \cite{ESZ}, where extra care is to be taken to utilize the correct superscript corresponding to Sweedler notation for comultiplication. To prove that $\psi$ is convolution invertible, one needs the fact that $q$ is a morphism of coalgebra, which is assumed by hypothesis. 
\end{proof}
\begin{example}\label{ex:Hopfcocy}
	{\rm 
		Let $X$ be an involutory Hopf algebra over a ground field $\mathbbm k$, and let $\alpha$ denote a $2$-cocycle with coefficients in $\mathbbm k$. Recall, from the introduction of \cite{AS}, or \cite{KSq} Section~10.2.3, that this means that $\sigma: X\otimes X\longrightarrow \mathbbm k$ is convolution invertible, satisfies the equation
		$$
		\sigma(x^{(1)}\otimes y^{(1)})\sigma(x^{(2)}y^{(2)}\otimes z) = \sigma(x\otimes y^{(1)}z^{(1)})\sigma(y^{(2)}\otimes z^{(2)}),
		$$
		for all $x,y,z\in X$, and it is normalized $\sigma(1\otimes x) = \sigma(x\otimes 1) = \epsilon(x)$. Let us consider now the (double) quantum conjugation operation of Example~\ref{ex:qconjugation}. Then, assuming that the underlying coalgebra structure is cocommutative for simplicity, one can verify that the morphism $\alpha: X\otimes X\longrightarrow \mathbbm k$ defined as $\alpha(x\otimes y) := \sigma(x^{(1)}\otimes y^{(1)})\sigma^{-1}(y^{(2)}\otimes S(y^{(3)})x^{(2)}y^{(4)})$ satisfies the (binary) self-distributive cocycle condition. Applying Lemma~\ref{lem:cocyclecomposition} we obtain a (ternary) categorical $2$-cocycle. In fact, the correspondence $\sigma \mapsto \alpha$ is a quantum version of the map given in \cite{CJKLS}, Theorem~7.1. This was originally motivated by diagrammatic computations. 
}
\end{example}

Cocycles of Hopf algebras are used to twist the product structure and the antipode, to obtain a new Hopf algebra \cite{AS,KSq}. It is known, and easily verified, that when the underlying coalgebra structure is cocommutative, Hopf $2$-cocycles generate the same product of the Hopf algebra one starts with. Example~\ref{ex:Hopfcocy} considers, in fact,  a subclass of a family of $2$-cocycles that are referred to as {\it invariant cocycles}, or {\it lazy cocycles} \cite{EG}. These are defined as those $2$-cocycles whose corresponding twisted Hopf algebra structure is unchanged. Invariant cocycles are defined up to an equivalence relation, and their equivalence classes constitute a group, as for the cohomology of groups, or algebras etc. It seems a relevant question to study equivalence classes of categorical $2$-cocycles obtained from invariant cocycles. Moreover, as seen in the next section, categorical $2$-cocycles are used to twist the Yang-Baxter operator associated to a TSD object, and the ribbon category constructed below is well defined within an equivalence class. It might be of interest to relate the equivalence classes of invariant $2$-cocycles to those of ribbon categories associated to TSD cocycles. We point out that we do not know whether it is possible to construct categorical TSD $2$-cocycles from Hopf $2$-cocycles that are not invariant.

\subsection{Construction of ribbon categories} 
We define a general version of ribbon categories from TSD objects in symmetric monoidal categories. This construction gives the linearized definition in Section~\ref{sec:ribboncat} when the symmetric monoidal category is that of vector spaces over a field, the TSD object is defined by linearizing a set-theoretic ternary rack/quandle, and the coefficients are taken to be a subgroup of the units of the ground field, via a group character. 

Let $\mathcal C$ be an $\mathbb I$-linear symmetric monoidal category, and let $X$ be a ternary self-distributive object in $\mathcal C$. Let $\alpha$ be a categorical $2$-cocycle in the sense of Definition~\ref{def:catcocy}. Then we define a braiding morphism $c_{2,2}^\alpha$ following the case of linearized set-theoretic operations as follows
$$
c_{2,2}^\alpha = (\mathbbm 1^\otimes 2\otimes ([\alpha\otimes \alpha]\otimes  T\otimes T))\shuffle_c(\Delta^{\otimes 2}\Delta_4^{\otimes 2}),
$$
where $\shuffle_c$ is the morphism corresponding to the permutation  
$$\sigma = \bigl(\begin{smallmatrix}
1 & 2 & 3 & 4& 5 &6 & 7 & 8 & 9 & 10 & 11 & 12 & 13 & 14 \\
3 & 9 & 6 & 12 & 1 & 4 & 7 & 10 & 13 & 2 & 5 & 8 & 11 & 14
\end{smallmatrix}\bigr).
$$
Similarly we define a twisting morphism as follows
$$
\theta^\alpha_2 =([\alpha\otimes \alpha]\otimes T\otimes T) \shuffle_\theta(\Delta_6^{\otimes 2}),
$$
where $\shuffle_\theta$ is the morphism corrsponding to the permutation 
$$
\sigma = \bigl(\begin{smallmatrix}
1 & 2 & 3 & 4& 5 &6 & 7 & 8 & 9 & 10 & 11 & 12 \\
1& 2 & 5 & 7 & 8  & 11 & 4 & 3 & 6 & 10 & 9 & 12
\end{smallmatrix}\bigr).
$$
\begin{example}
	{\rm 
We want to illustrate braiding and twisting morphisms, as given above, in the category of vector spaces. Let $\alpha$ be a categorical $2$-cocycle as in Example~\ref{ex:vector2cocy}. We now explicitly give the form of switching and twisting morphisms on simple tensors. We have for $c_{2,2}^\alpha$
\begin{eqnarray*}
	c_{2,2}^\alpha(x\otimes y \otimes \otimes z\otimes w) &=& z^{(1)}\otimes w^{(1)} \otimes \\
	&&[\alpha(x^{(1)}\otimes z^{(2)}\otimes w^{(2)})\cdot \alpha(y^{(1)}\otimes z^{(3)}\otimes w^{(3)})]\cdot \\
	&& T(x^{(2)}\otimes z^{(4)}\otimes w^{(4)})\otimes T(y^{(2)}\otimes z^{(5)}\otimes w^{(5)}),
\end{eqnarray*}
and for twisting morphism $\theta^\alpha_2$
\begin{eqnarray*}
	\theta^\alpha_2 (x\otimes y) &=& [\alpha(x^{(1)}\otimes x^{(2)}\otimes y^{(2)})\cdot \alpha(y^{(1)}\otimes x^{(3)}\otimes y^{(3)})]\cdot \\
	&& T(x^{(4)}\otimes x^{(5)}\otimes y^{(5)})\otimes T(y^{(4)}\otimes x^{(6)}\otimes y^{(6)}).
\end{eqnarray*}	
}
\end{example}

We suppose now, for simplicity, that the comultiplication of $X$ is cocommutative. Although, in principle this condition can be weakened to some classes of object whose comultiplication satisfies some special symmetries in the next results, the proofs are significantly easier when dealing with a cocommutative object. For instance, the comultiplication of Example~\ref{ex:klinear} is not cocommutative. It is possible to show nontheless that the following constructions hold. We consider this class of objects in a subsequent article.

\begin{definition}
	{\rm 
	Let $\mathcal C$ be an $\mathbb I$-linear symmetric monoidal category with duals, $(X,T)$ a TSD object in $\mathcal C$, and $\alpha$ a categorical $2$-cocycle of $X$ with coefficients in $\mathbb I$. Let $c_{2,2}^\alpha$ and $\theta_2^\alpha$ be braiding a twisting morphisms, respectively, as previously defined in this section. Then we define $\mathcal R^*_\alpha(X)$ to be the monoidal category freely generated by $X\otimes X$ where morphisms are defined as in Definition~\ref{def:ribboncat} from $c_{2,2}^\alpha$ and $\theta_2^\alpha$. 
}
\end{definition}

\begin{theorem}\label{thm:ribbongeneral}
$\mathcal R^*_\alpha(X)$ is a ribbon category. Moreover, if $\alpha$ and $\beta$ are equivalent, then $\mathcal R^*_\alpha(X) \cong \mathcal R^*_\beta(X)$ as ribbon categories.
\end{theorem}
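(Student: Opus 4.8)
The plan is to mirror the proof of Theorem~\ref{thm:ribboncat} line by line, systematically replacing each set-theoretic ingredient by its categorical counterpart: the linearized self-distributivity of $T$ becomes the commuting self-distributivity diagram defining a TSD object, the set-theoretic ternary $2$-cocycle identity becomes the categorical $2$-cocycle diagram of Definition~\ref{def:catcocy}, the invertibility of the right-translations (the rack axiom) is supplied by the morphism $T^{-1}$ of a ternary rack together with the convolution inverse $\alpha^{-1}$, and the fact that in the set-theoretic formulas a repeated symbol is a literal copy is replaced everywhere by an application of the comultiplication $\Delta$. That $\mathcal R^*_\alpha(X)$ is a monoidal category with duals is then immediate from the hypothesis that $\mathcal C$ is $\mathbb I$-linear symmetric monoidal with duals and from the inductive block construction of morphisms, exactly as before; rigidity follows from the $ev/coev$ maps in the standard way.

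First I would show that $c^\alpha_{2,2}$ is an isomorphism. Its inverse is assembled from $\alpha^{-1}$ and from $T^{-1}$: the defining equation $T^{-1}\circ[T\otimes \mathbbm 1^{\otimes 2}]\circ(\mathbbm 1\otimes \Delta\otimes \Delta)=\mathbbm 1\otimes \epsilon\otimes \epsilon$, combined with the convolution identity for $\alpha$, collapses the composite of $c^\alpha_{2,2}$ with its candidate inverse to the identity once the auxiliary comultiplied legs are absorbed by the counit $\epsilon$. Cocommutativity is invoked here to match the legs produced by the two successive applications of $\Delta$.

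The heart of the argument is the braid equation for $c^\alpha_{2,2}$, from which all higher braidings follow by iteration and naturality with respect to morphisms is then formal, just as in Theorem~\ref{thm:ribboncat}. I would expand both composites using the explicit Sweedler-notation formulas given above and observe that, after stripping the shuffle bookkeeping encoded in $\shuffle_c$, the scalar parts are reconciled by two applications of the categorical $2$-cocycle diagram of Definition~\ref{def:catcocy}, applied separately to the legs descending from $x$ and from $y$ (now genuinely distinct comultiplied copies, not a repeated symbol), while the object parts are reconciled by the self-distributivity diagram of $T$. The same template disposes of the twist-naturality squares $(\theta^\alpha\boxtimes \mathbbm 1)\circ c^\alpha_{2,2}=c^\alpha_{2,2}\circ(\mathbbm 1\boxtimes \theta^\alpha)$ and its mirror, and of the identity $\theta^\alpha_{Y\boxtimes Z}=c^\alpha_{Z,Y}\circ c^\alpha_{Y,Z}\circ(\theta^\alpha_Y\boxtimes \theta^\alpha_Z)$, each of which reduces to the cocycle condition plus self-distributivity; compatibility with duals, $\theta_{X^*}=(\theta_X)^*$, is then checked against the $ev/coev$ description of $\theta^\alpha$ as a self-intersection, in the spirit of Remark~\ref{rmk:twist}.

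For the equivalence statement, given a convolution invertible $f\colon X\longrightarrow \mathbb I$ realizing the equivalence of $\alpha$ and $\beta$, I would define $\tilde f\colon X^{\otimes 2}\longrightarrow X^{\otimes 2}$ as the $\mathbb I$-linear rescaling $\shuffle\circ(f\otimes \mathbbm 1\otimes f\otimes \mathbbm 1)\circ(\Delta\otimes \Delta)$, the categorical analogue of $x\otimes y\mapsto f(x)f(y)\cdot x\otimes y$, with inverse built from $f^{-1}$. Taking $F$ to be the identity on objects and $\tau\mapsto(\tilde f^{-1})^{\otimes n}\circ \tau\circ \tilde f^{\otimes n}$ on morphisms, the equivalence diagram for $f$ shows exactly that $F$ sends $\theta^\beta_2\mapsto \theta^\alpha_2$ and $c^\beta_{2,2}\mapsto c^\alpha_{2,2}$; being the identity on objects and fully faithful by invertibility of $\tilde f$, it is an equivalence of braided tensor categories respecting the twist. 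The main obstacle I anticipate is the braid-equation computation itself: in the set-theoretic case the repeated entries were literally equal, whereas here each ``repeated'' variable is a separate output leg of $\Delta$, so one must track the Sweedler indices through $\shuffle_c$ and $\shuffle_\theta$ and invoke cocommutativity precisely at the points where the earlier proof silently identified copies, ensuring that the categorical $2$-cocycle diagram is applied with the correct comultiplied arguments rather than the naive diagonal substitution.
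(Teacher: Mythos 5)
Your proposal follows essentially the same route as the paper's own proof: a Sweedler-notation prototype upgraded to commutative diagrams, with the braid equation reconciled by applying the categorical $2$-cocycle condition separately to the legs descending from the two strands together with the TSD diagram, cocommutativity invoked exactly where the linearized proof of Theorem~\ref{thm:ribboncat} silently identified repeated entries, invertibility of $c^{\alpha}_{2,2}$ obtained from convolution invertibility of $\alpha$ (the paper writes an explicit candidate inverse built from $T$, $\tilde\alpha$ and a shuffle, while you invoke the ternary-rack morphism $T^{-1}$ — which is precisely the structure the paper introduced for this purpose), and the equivalence statement handled by conjugating morphisms with the rescaling $\tilde f$ built from $f$, as in Theorem~\ref{thm:ribboncat}. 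The approach and all key ingredients match the paper's argument.
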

Before proceeding with the actual proof, we give some key steps of the proof in the setting of the category of vector spaces. This special case is rather illuminating when considering the actual proof, and functions as prototype for the general case. We want to show that the braid equation holds, by computing the LHS and RHS on a simple tensor $x\otimes y\boxtimes z\otimes w\boxtimes u\otimes v\in X^{\otimes 2}\boxtimes X^{\otimes 2}\boxtimes X^{\otimes 2}$. We obtain, using Sweedler's notation, and applying one of the maps whose composition is the LHS of the braid equation
\begin{eqnarray*}
\lefteqn{x\otimes y\boxtimes z\otimes w\boxtimes u\otimes v}\\
&\mapsto &z^{(1)}\otimes w^{(1)}\boxtimes\\
&&\hspace{0.5cm} [\alpha(x^{(1)}\otimes z^{(2)}\otimes w^{(2)})\alpha(y^{(1)}\otimes z^{(3)}\otimes w^{(3)})]\cdot\\
&&\hspace{0.5cm} T(x^{(2)}\otimes z^{(4)}\otimes w^{(4)})\otimes T(y^{(2)}\otimes z^{(5)}\otimes w^{(5)})\boxtimes u\otimes v\\
&\mapsto& z^{(1)}\otimes w^{(1)}\boxtimes u^{(1)}\otimes v^{(1)}\boxtimes [\alpha(T(x^{(21)}\otimes z^{(41)})\otimes w^{(41)}\otimes u^{(2)}\otimes v^{(2)})\\
&&\hspace{0.5cm} \alpha(T(y^{(21)}\otimes z^{(51)})\otimes w^{(51)}\otimes u^{(3)}\otimes v^{(3)})\alpha(x^{(1)}\otimes z^{(2)}\otimes w^{(2)})\\
&&\hspace{0.5cm} \alpha(y^{(1)}\otimes z^{(3)}\otimes w^{(3)})]\cdot T(T(x^{(22)}\otimes z^{(42)}\otimes w^{(42)}\otimes u^{(4)}\otimes v^{(4)})\\
&& \hspace{0.5cm} \otimes T(T(y^{(22)}\otimes z^{(52)}\otimes w^{(52)}\otimes u^{(5)}\otimes v^{(5)})\\
&\mapsto& u^{(11)}\otimes v^{(11)}\boxtimes [\alpha(z^{(11)}\otimes u^{(12)}\otimes v^{(12)})\alpha(w^{(11)}\otimes u^{(13)}\otimes v^{(13)})]\\
&& \hspace{0.5cm} \cdot T(z^{(12)}\otimes u^{(14)}\otimes v^{(14)})\otimes T(w^{(12)}\otimes u^{(15)}\otimes v^{(15)})\boxtimes \\
&& \hspace{0.5cm} [\alpha(T(x^{(21)}\otimes z^{(41)})\otimes w^{(41)}\otimes u^{(2)}\otimes v^{(2)})\\
&&\hspace{0.5cm} \alpha(T(y^{(21)}\otimes z^{(51)})\otimes w^{(51)}\otimes u^{(3)}\otimes v^{(3)})\alpha(x^{(1)}\otimes z^{(2)}\otimes w^{(2)})\\
&&\hspace{0.5cm} \alpha(y^{(1)}\otimes z^{(3)}\otimes w^{(3)})]\cdot T(T(x^{(2)}\otimes z^{(42)}\otimes w^{(42)}\otimes u^{(4)}\otimes v^{(4)})\\
&& \hspace{0.5cm} \otimes T(T(y^{(22)}\otimes z^{(52)}\otimes w^{(52)}\otimes u^{(5)}\otimes v^{(5)}).
\end{eqnarray*}
 Let us now compute the RHS of the braid equation when evaluated on $x\otimes y\otimes z\otimes w\otimes u\otimes v$. We do not write each step, but rather provide the final result
\begin{eqnarray*}
\lefteqn{(\mathbbm 1\boxtimes c_{2,2}^\alpha)\circ(c_{2,2}^\alpha\boxtimes \mathbbm 1)\circ (\mathbbm 1\boxtimes c_{2,2}^\alpha)(x\otimes y\boxtimes z\otimes w\boxtimes u\otimes v)}\\
&=& u^{(11)}\otimes v^{(11)}\boxtimes [\alpha(z^{(1)}\otimes u^{(2)}\otimes v^{(2)})\alpha(w^{(1)}\otimes u^{(3)}\otimes v^{(3)})]\\
&& \hspace{0.5cm}\cdot T(z^{(21)}\otimes u^{(41)}\otimes v^{(41)})\otimes T(w^{(21)}\otimes u^{(51)}\otimes v^{(51)})\boxtimes \\
&& \hspace{0.5cm} [\alpha(T(x^{(21)}\otimes u^{(141)}\otimes v^{(141)})\otimes T(z^{(22)}\otimes u^{(42)}\otimes v^{(42)})\otimes T(w^{(22)}\otimes\\
&& \hspace{2cm} u^{(52)}\otimes v^{(52)}) )\\
&&\hspace{0.5cm} [\alpha(T(y^{(21)}\otimes u^{(151)}\otimes v^{(151)})\otimes T(z^{(23)}\otimes u^{(43)}\otimes v^{(43)})\otimes T(w^{(23)}\otimes\\
&& \hspace{2cm} u^{(53)}\otimes v^{(53)}))\alpha(x^{(1)}\otimes u^{(12)}\otimes v^{(12)})\alpha(y^{(1)}\otimes u^{(13)}\otimes v^{(13)})]\\
&& \hspace{0.5cm} \cdot T(T(x^{(12)}\otimes u^{(142)}\otimes v^{(142)})\otimes T(z^{(24)}\otimes u^{(44)}\otimes v^{(44)})\otimes T(w^{(24)}\otimes\\
&& \hspace{2cm} u^{(54)}\otimes v^{(54)}))\otimes \\
&&\hspace{0.5cm} T(T(y^{(12)}\otimes u^{(152)}\otimes v^{(152)})\otimes T(z^{(15)}\otimes u^{(45)}\otimes v^{(45)})\otimes T(w^{(25)}\otimes\\
&& \hspace{2cm} u^{(55)}\otimes v^{(55)}))).
\end{eqnarray*}
Now, comparing the two expressions we see that we cannot apply directly ternary self-distributivity of $T$ and the categorical $2$-cocycle condition to $\alpha$, as the terms corresponding to comultiplications in Sweedler notation are shuffled differently. We can rearrenge them by means of the cocommutativity of $X$ and then conclude that they coincide applying categorical $2$-cocycle condition and TSD property of $T$. 

A similar direct reasoning is applied to show that braiding morphism and twisting commute. We will not add the details of this computation.

The proof of Theorem \ref{thm:ribbongeneral} will consist of a similar reasoning, but replacing equations by commutative diagrams. We compare the expressions by writing them as compositions of ``all comultiplications'', ``shuffle'', ``$T$'s and $\alpha$'s'' and finally multiplying the $\alpha$'s by identifying two copies of the unit object $\mathbb I$ with $\mathbb I$ itself. Then using cocommutativity and naturality of shuffle morphisms in a symmetric monoidal category, one draws the conclusion that the two terms are equal. 

\begin{proof}[Proof of Theorem \ref{thm:ribbongeneral}]
	As in the proof of Theorem~\ref{thm:ribboncat}, the crucial step is to show that $c_{2,2}^\alpha$ satisfies the braid equation, and that $\theta_2^\alpha$ satisfies the compatibility relation with respect to $c_{2,2}^\alpha$. We proceed to prove the first assertion. We need to show that the diagram
	
	\begin{center}
		\begin{tikzcd}
			&X^{\otimes 2}\boxtimes X^{\otimes 2}\boxtimes X^{\otimes 2}\arrow[dl,swap,"c_{2,2}^\alpha\boxtimes \mathbbm 1^{\otimes 2}"]\arrow[dr,"1^{\otimes 2}\boxtimes c_{2,2}^\alpha"]&\\
			X^{\otimes 2}\boxtimes X^{\otimes 2}\boxtimes X^{\otimes 2}\arrow[dd,swap,"1^{\otimes 2}\boxtimes c_{2,2}^\alpha"] & & X^{\otimes 2}\boxtimes X^{\otimes 2}\boxtimes X^{\otimes 2}\arrow[dd,"c_{2,2}^\alpha\boxtimes \mathbbm 1^{\otimes 2}"]\\
			& &\\
			X^{\otimes 2}\boxtimes X^{\otimes 2}\boxtimes X^{\otimes 2}\arrow[dr,swap,"c_{2,2}^\alpha\boxtimes \mathbbm 1^{\otimes 2}"]& & X^{\otimes 2}\boxtimes X^{\otimes 2}\boxtimes X^{\otimes 2}\arrow[dl,"1^{\otimes 2}\boxtimes c_{2,2}^\alpha"]\\
			&X^{\otimes 2}\boxtimes X^{\otimes 2}\boxtimes X^{\otimes 2}&
			\end{tikzcd}
	\end{center}
is commutative. We will refer to the left perimeter, from top to bottom, of the preceding hexagon as the ``left-hand side'' of the braid diagram, or simply LHS, and similarly for the right perimeter we will say the ``right-hand side'' or RHS of the braid diagram. In what follows we will keep using the same shortened notation in which a tensor product of maps of such as $\mathbbm 1^{\otimes 2}\boxtimes T^{\otimes 2}$ will be denoted by juxtaposition, $\mathbbm 1^{2}T^2$, where powers of tensor products of type $\otimes 2$ are simply indicated by a power $2$. A similar notation is also used for objects, where we omit the tensor symbol $\otimes$ in the exponents, although we keep the $\boxtimes$ symbol for clarity.

The proof will consists of rewriting the LHS and the RHS of the braid diagram in a convenient way, by using the axioms of the symmetric monoida category $\mathcal C$, and the left Frobenius module axiom. Then we will argue that the two expressions coincide by means of the categorical $2$-cocycle condition, and the rack axioms of $T$. We start with the LHS.

 Using the definition of $c_{2,2}^\alpha$, the LHS of the braid diagram fits in the commutative diagram of Figure~\ref{fig:diagram1}.
 \begin{figure}
	\begin{center}
		\begin{tikzcd}
			X^{ 2}\boxtimes X^{ 2}\boxtimes X^{ 2}\arrow[dddddddr,bend right,swap,"(c_{2,2}^\alpha \mathbbm 1^{ 2})\circ(\mathbbm 1^{ 2} c_{2,2}^\alpha)\circ(c_{2,2}^\alpha \mathbbm 1^{2})"]\arrow[dddddr,bend right,"c_{2,2} \mathbbm{1^{ 2}}"] \arrow[dr,"\Delta^{ 2} \Delta_4^{ 2} \mathbbm{1^{2}}"]& \\
			& X^{ 4}\boxtimes X^{ 10}\boxtimes X^{ 2} \cong X^{ 2}\boxtimes X^{ 12}\boxtimes X^{ 2}\arrow[d,"\shuffle_c 1^{ 2}"]\\
			&  X^{ 2}\boxtimes X^{ 12}\boxtimes X^{ 2}\arrow[d,"\alpha^2  T^2 \mathbbm 1^{ 2}"]\\
			& X^{2}\boxtimes  X^{2}\otimes X^{ 2}\boxtimes X^{ 2}\arrow[d,"\mu \mathbbm1^{2} \mathbbm 1^{2}"]\\
			& X^{ 2}\boxtimes X^{ 3}\boxtimes X^{ 2}\arrow[d,"\mathbbm 1^{ 2} \mathbbm 1^{2}"]\\
			& X^{ 2}\boxtimes X^{2}\boxtimes X^{2}\arrow[d,"\mathbbm 1^{ 2} c_{2,2}^\alpha"]\\
			& X^{ 2}\boxtimes X^{2}\boxtimes X^{2}\arrow[d,"c_{2,2}^\alpha\mathbbm 1^{2}"]\\
			& X^{2}\boxtimes X^{2}\boxtimes X^{2}
		\end{tikzcd}
	\end{center}
\caption{}
\label{fig:diagram1}
\end{figure}
Now we rewrite the top-right perimeter of the previous diagram as in Figure~\ref{fig:diagram2},
\begin{figure}
\begin{center}
	\begin{tikzcd}
		X^{2}\boxtimes X^{2}\boxtimes X^{2}\arrow[d,swap,"\Delta^{ 2} \Delta_4^{ 2} \mathbbm 1^{2}"]\arrow[r,"\Delta^2\Delta_4^2\Delta_4^2"]&X^{4}\boxtimes X^{ 10}\boxtimes X^{10}\arrow[d,equal]\arrow[r,"(\Delta\mathbbm 1)^2(\Delta\mathbbm 1^4)^2\mathbbm 1^{10}"]& X^2\boxtimes X^{12}\boxtimes X^{10}\arrow[dd,"\shuffle"]\\
		X^{ 2}\boxtimes X^{ 12}\boxtimes X^{2}\arrow[r,"\mathbbm 1^{14}\Delta_4^2"]\arrow[d,swap,"\shuffle_c \mathbbm 1^{ 2}"]&X^2\boxtimes X^{12}\boxtimes X^{10}\arrow[d,"(\Delta\mathbbm 1)^2(\Delta\mathbbm 1^4)^2\mathbbm 1^{10}"]&\\
		X^{ 2}\boxtimes  X^{12}\boxtimes X^{ 2}\arrow[r,"\mathbbm 1^8\Delta^6\Delta_4^2"]\arrow[d,swap,"\mathbbm 1^2\alpha^2T^2\mathbbm 1^2"]&X^2\boxtimes X^6\otimes X^{12}\boxtimes X^{10}\arrow[d,"\mathbbm 1^2\alpha^2T^4\mathbbm 1^{10}"]\arrow[r,"\shuffle"]& X^2\boxtimes X^2\boxtimes X^{26}\arrow[d,swap,"\mathbbm 1^4(T\mathbbm 1^2)^2\alpha^2(T\mathbbm 1^2)^2"]\\
		X^{ 2}\boxtimes \mathbb I^2\otimes X^{ 2}\boxtimes X^{ 2}\arrow[r,"\mathbbm 1^4\Delta^2\Delta_4^2"]\arrow[d,swap,"\mathbbm 1^2\mu\mathbbm 1^4"]&X^2\boxtimes \mathbb I^2\otimes X^4\boxtimes X^{10}\arrow[d,"\mathbbm 1^2 (\mu\mathbbm 1^4)\mathbbm 1^{10}"]\arrow[r,"\shuffle"]&X^2\boxtimes X^2\boxtimes X^6\otimes \mathbb I^2\otimes X^6\arrow[d,swap,"\mathbbm 1^{10}\mu\mathbbm 1^6"]\\
		X^{ 2}\boxtimes \mathbb I\otimes X^{ 2}\boxtimes X^{ 2}\arrow[r,"(\mathbbm 1^3\Delta^2\Delta_4^2)"]\arrow[d,swap,"\mathbbm 1^2\mathbbm 1^3"]&X^2\boxtimes X^4\boxtimes X^{10}\arrow[r,"\mathbbm 1^2\shuffle_c"]& X^2\boxtimes X^2\boxtimes X^{12}\arrow[d,swap,"(\mathbbm 1^4(\mu\mathbbm 1)\mathbbm 1^2)\circ(\mathbbm 1^{10}T^2)"]\\
		X^{ 2}\boxtimes X^{ 2}\boxtimes X^{ 2}\arrow[ur,swap,"\mathbbm 1^2\Delta^2\Delta_4^2"]\arrow[rr,"\mathbbm 1^2c_{2,2}^\alpha"]\arrow[rrd,bend right,"(c_{2,2}^\alpha\mathbbm 1^2)\circ(\mathbbm 1^2c_{2,2}^\alpha)"]&&X^{ 2}\boxtimes X^{ 2}\boxtimes X^{ 2}\arrow[d,"c_{2,2}^\alpha\mathbbm 1^2"]\\
		& & X^{ 2}\boxtimes X^{ 2}\boxtimes X^{ 2}
		\end{tikzcd}
\end{center}
\caption{}
\label{fig:diagram2}
\end{figure}
where we have used naturality of the shuffle morphisms in the category $\mathcal C$, coassociativity of the morphism $\Delta$ and, in the bottom-left triangle, the fact that $\mathbb I$ commutes with objects  of $\mathcal C$. Similar considerations show that the diagram of Figure~\ref{fig:diagram3}
\begin{figure}
\begin{center}
	\begin{tikzcd}
		X^2\boxtimes X^{12}\boxtimes X^{10}\arrow[dd,swap,"\shuffle"]\arrow[rr,bend left]\arrow[r,"\mathbbm 1^6(\Delta_2\mathbbm 1^5)^2(\Delta_4\mathbbm 1^4)^2"] & X^2\boxtimes X^{20}\boxtimes X^{18}\arrow[dd,"\shuffle"]& X^2\boxtimes X^2\boxtimes X^2\\
		& &  \\
		X^2\boxtimes X^2\boxtimes X^{26}\arrow[d,swap,"\mathbbm 1^4(T\mathbbm 1)^2\alpha^2(T\mathbbm 1^2)^2"]\arrow[r,""]&X^2\boxtimes X^{12}\boxtimes X^{26}\arrow[d]\arrow[uur]& \\
		X^2\boxtimes X^2\boxtimes X^6\otimes \mathbb I^2\otimes X^6\arrow[d,swap,"\mathbbm 1^4(\mu\circ\mu^2)\mathbbm 1^8"]\arrow[r]&X^4\boxtimes X^{10}\boxtimes X^6\otimes \mathbb I^2\otimes X^6&\\
		X^2\boxtimes X^2\boxtimes X^{12}\arrow[d,swap,"\mathbbm 1^4\alpha^2T^2"]& &\\
		X^2\boxtimes X^2\boxtimes \mathbb I^2\otimes X^2\arrow[d,swap,"\mathbbm 1^4\mu\mathbbm 1^2"]\arrow[uur]& &\\
		X^2\boxtimes X^2\boxtimes \mathbb I\otimes X^2\arrow[d,swap,"\mathbbm 1^4\mathbbm1"]& &\\
		X^2\boxtimes X^2\boxtimes X^2\arrow[uuuur, bend right=15,swap,"\Delta^2\Delta_4^2\mathbbm 1^2"]\arrow[uuuuuuurr,bend right,swap,"c_{2,2}^\alpha\mathbbm 1^2"] & &
		\end{tikzcd}
	\end{center}
\caption{}
\label{fig:diagram3}
\end{figure}
commutes, where the horizontal maps are obtained from (appropriate) products of the comultiplication morphism, and similarly for the hypotenuse of the central triangle.  The morphism $X^2\boxtimes X^{12} \boxtimes X^{26} \longrightarrow X^2\boxtimes X^2\boxtimes X^2$ is $\mathbbm 1^2 \alpha^2T^2((\mu\circ\mu^2)\circ (T\mathbbm 1^2)^2)T\circ (T\mathbbm 1^2)^2$. Pasting together the three diagrams along the corresponding edges shows that the morphism $(c_{2,2}^\alpha\boxtimes \mathbbm 1)\circ (\mathbbm 1\boxtimes c_{2,2}^\alpha)\circ (c_{2,2}^\alpha\boxtimes \mathbbm 1)$ can be rewritten by first aplying the comultiplication morphisms $\Delta$ all together to produce the right number of copies of $X$, shuffle them in the right position and then apply all the morphisms $T$ and $\alpha$ at the end. This is in fact better understood by thinking of the same proof for Hopf algebras.  A (very tedious) direct inspection, simplified by diagrammatic reasoning, can be applied to determine step by step the shuffles used in the commutative diagrams. Thus proceeding we see that we have obtained for the LHS of the braid diagram $(c_{2,2}^\alpha\boxtimes \mathbbm 1)\circ (\mathbbm 1\boxtimes c_{2,2}^\alpha)\circ (c_{2,2}^\alpha\boxtimes \mathbbm 1) = (\mathbbm 1^{\otimes 2}\boxtimes (\mathbbm 1\circ\mu\otimes \mathbbm 1^{\otimes 2})\boxtimes (\mu^{\otimes }))\circ(\mathbbm 1^2\boxtimes \alpha^{\otimes 2}T^{\otimes 2}\boxtimes (\alpha\circ (T\otimes \mathbbm 1^{\otimes 2}))^{\otimes 2}\otimes \alpha^{\otimes 2}\otimes (T\circ (T\otimes \mathbbm 1^{\otimes 2})^{\otimes 2}))\circ\shuffle\circ(\Delta_2^{\otimes 2}\boxtimes \Delta_7^{\otimes 2}\boxtimes \Delta_{8}^{\otimes 2})$, where the shuffle morphism $\shuffle$ corresponds to the permutation that rearrenges the terms in the last expression of the computation of $(c_{2,2}^\alpha\boxtimes \mathbbm 1)\circ (\mathbbm 1\boxtimes c_{2,2}^\alpha)\circ (c_{2,2}^\alpha\boxtimes \mathbbm 1)$. A similar procedure shows that the RHS of the braid diagram is written as $ (\mathbbm 1\boxtimes c_{2,2}^\alpha)\circ (c_{2,2}^\alpha\boxtimes \mathbbm 1)\circ  (\mathbbm 1\boxtimes c_{2,2}^\alpha) =  (\mathbbm 1^{\otimes 2}\boxtimes (\mu\circ\alpha^{\otimes 2})\circ T^{\otimes 2})\boxtimes (\mu\circ(\mu\circ \alpha^{\otimes 2}\otimes \mu\circ \alpha^{\otimes 2})\otimes (T\circ T^{\otimes 3})^{\otimes 2})\shuffle\circ(\Delta_2^{\otimes 2}\boxtimes \Delta_5^{\otimes 2}\boxtimes \Delta_{18}^{\otimes 2})$, where the shuffle morphism can be seen to coincide with the morphism corresponding to the permutation giving rise to the reordering of elements given in the RHS of the computation preceding the present proof. 

To show that twist morphism $\theta^\alpha_2$ satisfies the compatibility condition with braiding morphism $c_{2,2}^\alpha$, we can proceed similarly. First we rewrite the two perimeters of the diagram 
\begin{center}
	\begin{tikzcd}
	&	X^{\otimes 2}\boxtimes X^{\otimes 2}\arrow[dl,swap,"\theta^\alpha_2\boxtimes \mathbbm 1"] \arrow[dr,"c_{2,2}^\alpha"]  &\\
	X^{\otimes 2}\boxtimes X^{\otimes 2}\arrow[dr,swap,"c_{2,2}^\alpha"] & & X^{\otimes 2}\boxtimes X^{\otimes 2}\arrow[dl,"\mathbbm 1\boxtimes \theta^\alpha_2"] \\
	& X^{\otimes 2}\boxtimes X^{\otimes 2} & 
	\end{tikzcd}
\end{center}
as composition of comultiplications in $X$ and the appropriate shuffle, then applying the morphisms $T$, $\alpha$ and multiplication $\mu$. Then we can compare the two expressions by using TSD property of $T$ and categorical $2$-cocycle condition of $\alpha$. Similar considerations are also applied to the diagram proving that $(\mathbbm 1\boxtimes \theta^\alpha_2)\circ c_{2,2}^\alpha = c_{2,2}^\alpha\circ(\theta^\alpha_2\boxtimes \mathbbm 1)$. 

We need to prove now that the morphism $c_{2,2}^\alpha$ has an inverse in the category $\mathcal C$. Upon defining the morphism $\hat c^\alpha_{2,2} := (\mu\otimes \mathbbm 1^{\otimes 2}\boxtimes \mathbbm 1^{\otimes 2})\circ(\tilde\alpha^{\otimes 2}\boxtimes \mathbbm 1^{\otimes 2})\circ [(T\otimes \mathbbm 1)^{\otimes 2}\otimes (T^{\otimes 2}\boxtimes \mathbbm 1^{\otimes 2})]\circ\shuffle\circ(\Delta_6^{\otimes 2}\boxtimes \Delta^{\otimes 2})$
where we have indicated $\tilde \alpha : X^{\otimes 3} \longrightarrow \mathbb I$ the convolution inverse of $\alpha$, and the shuffle morphism corresponds to the permutation
$$\sigma = \bigl(\begin{smallmatrix}
1 & 2 & 3 & 4 & 5 &6 & 7 & 8 & 9 & 10 & 11 & 12 & 13 & 14 & 15 & 16 & 17 & 18 \\
3 & 4 & 8 & 9 & 12 & 15 & 16 & 17 & 2 & 5 & 7 & 10 & 12 & 18 & 11 & 1 & 6 & 14
\end{smallmatrix}\bigr),
$$
we see that $\hat c_{2,2}^\alpha$ is a left and right inverse of $c_{2,2}^\alpha$, which gives the required invertibility. It is fundamental here, that $\alpha$ is convolution invertible. 

Once the fact that $c_{2,2}^\alpha$ satisfies the braid equation, the compatibility condition with $\theta_2^\alpha$ and invertibility, the rest of the proof proceeds as that of Theorem~\ref{thm:ribboncat}, as the category $\mathcal R^*(X)$ is defined inductively as in Section~\ref{sec:ribboncat}.

To show that, given equivalent $2$-cocycles $\alpha$ and $\beta$, there exists an equivalence of ribbon categories $\mathcal R^*_\alpha(X) \cong \mathcal R^*_\beta(X)$ one proceeds as in the counterpart of the same proof of Theorem~\ref{thm:ribboncat}, where we use that a morphism $f:X\longrightarrow \mathbb I$ giving the equivalence between $\alpha$ and $\beta$ is convolution invertible. 
\end{proof}

Since over an algebraically closed fields of characteristic zero $\mathbbm k$, a cocommutative finite dimensional Hopf algebra $X$ is isomorphic to the group algebra $\mathbbm k[G]$ for some group $G$, in a sense, the construction of Section~\ref{sec:ribboncat} is general enough for TSD structures obtained via heaps. The ribbon category of Theorem~\ref{thm:ribbongeneral} gives new results when applied to finite dimensional Hopf algebras over a ring/field that is not an algebraically closed field of characteristic zero. 

\begin{remark}
	{\rm 
Observe that Theorem~\ref{thm:ribbongeneral} generalizes the results of Section~\ref{sec:ribboncat} to the setting of cohomology coefficients in a non-commutative group $G$, since from a $2$-cocycle with coefficients in $G$ and a group character $\chi: G\longrightarrow \mathbbm k^\times$, we obtain a ribbon category in the category of vector spaces over $\mathbbm k$, where braiding and twisting are generated by $c^\alpha_{2,2}$ and $\theta^\alpha_2$. 
}
\end{remark}

\section{Invariants of framed links in symmetric monoidal categories}\label{sec:quantumgeneral} 

Lastly, we describe in this section how the construction of Section~\ref{sec:generalized}, whose twisting morphisms are revisited here, gives rise to quantum invariants of framed links in symmetric monoidal categories. This discussion follows standard arguments, already introduced in Section~\ref{sec:ribbonquantum} for instance. 

Recall that twists in a braided category, along with duals, uniquely determine a pivotal structure, i.e. an isomorphism of objects $X$ with their double duals $X^{**}$. The family of twists defined above, with the corresponding choice of pivotal structure can be used to define invariants of framed links. It is important to note, though, that duality of an object $X$ in a symmetric monoidal category can be regarded as a finiteness condition that ensures the existence of quantum trace. 

In fact, for an important class of TSD objects, namely that of involutory Hopf monoids in a symmetric monoidal category, there often exists another natural choice of twists, and therefore pivotal structure, that can be used to construct framed link invariants. Before explicitly giving the definition of link invariants from the category $\mathcal R^*_\alpha(X)$, we discuss how to obtain new twists for involutory Hopf algebras, and more generally Hopf monoids, under some extra conditions.

First recall that a Frobenius algebra $(X,\mu,\eta,\Delta,\epsilon)$ in a symmetric monoidal category $\mathcal C$ is a bialgebra such that the following Frobenius axiom holds
$$
(\mathbbm 1\otimes \mu)\circ (\Delta \otimes \mathbbm 1) = \Delta \circ \mu = (\mu \otimes \mathbbm 1)\circ (\mathbbm 1\otimes \Delta).
$$
Given a Frobenius algebra $X$, we can define an (associative) pairing and a coparining, indicated as $\cup: X\otimes X\longrightarrow \mathbb I$ and $\cap : \mathbb I\longrightarrow X\otimes X$ respectively, as $\cup = \epsilon \circ \mu$ and $\cap = \Delta \circ \eta$. Paring and coparing in a Frobenius algebra satisfy the axiom 
$$
(\mathbbm 1\otimes \cup) \circ (\cap \otimes \mathbbm 1) = \mathbbm 1 = (\cup\otimes \mathbbm 1) \circ (\mathbbm 1\otimes \cap),
$$
which determines that $\cup$ is an associative non-degenerate pairing. It in fact turns out that a monoid endowed with an associative pairing and a copairing satisfying the compatibility condition given above is equivalent to the previous definition of Frobenius algebra. 

Suppose we are given an involutory Hopf algebra which is finitely generated over a ground PID $\mathbbm k$. We know that a Frobenius algebra arises from such a Hopf algebra, from traditional results of Larson and Sweedler \cite{LS}. The Frobenius structure then gives an identification of $X$ with its dual $X^*$ and, consequently, a pivotal structure where $X \cong X^{**}$ is obtained through the Frobenius pairing. Let us indicate by $\cup$ and $\cap$ the Frobenius pairings. 

Let us now consider an $\mathbb I$-linear symmetric monoidal category $\mathcal C$, and let $X$ denote a (cocommutative) Hopf monoid in $\mathcal C$. We define a braiding $c$ using the quantum heap operation of $X$, as in the previous section. Suppose $X$ has an integral and a cointegral. These are, by definition, a point $\lambda : \mathbb I\longrightarrow X$ and a copoint $\gamma: X\longrightarrow \mathbb I$, respectively, such that 
\begin{eqnarray}
\mu_X\circ(\lambda\otimes \mathbbm 1) &=&  \epsilon \lambda\\ \label{eqn:integral}
 (\gamma \otimes \mathbbm 1)\circ \Delta_X &=& \eta \gamma. \label{eqn:cointegral}
\end{eqnarray}
Then we can define a Frobenius structure on $X$ with Frobenius pairing $\cup := \gamma \mu (\mathbbm 1\otimes S)$, and copairing $\cap := \Delta \lambda$. Such a construction also holds for finitely generated projective Hopf algebras (with trivial coinvariants), \cite{Par}. More generally, for an involutory Hopf monoid with (co)integrals in a symmetric monoidal category, under relatively mild assumptions, such as $\lambda \gamma = \mathbbm 1$ and $\lambda S \gamma = \mathbbm 1$ as in \cite{CD}, the same constructions as above are still valid. It was shown in \cite{SZ2} that under such circumstances, the quantum heap braiding of $X$ commutes with Frobenius paring and copairing. We summarize this in the following result. 

\begin{lemma}\label{lem:Frob}
	Let $X$ denote a (cocommutative) Hopf monoid such that there exist an integral $\lambda$ and a cointegral $\gamma$ such that setting $\cup = \gamma \mu (\mathbbm 1\otimes S)$ and $\cap = \Delta \lambda$, we obtain a Frobenius algebra structure on $X$. Then, we have the following equalities
	\begin{eqnarray*}
	\cup \otimes \mathbbm 1^{\otimes 2} &=& (\mathbbm 1^{\otimes 2}\otimes \cup) \circ c\\
	(\cup\otimes \mathbbm 1^{\otimes 2})\circ c &=& \mathbbm 1^{\otimes 2}\otimes \cup.
    \end{eqnarray*}
Similar conditions hold for $\cap$ and $c$. 
\end{lemma}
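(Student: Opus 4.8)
The plan is to verify each sliding identity by unfolding the quantum heap braiding and the Frobenius pairing into their defining morphisms and then collapsing the resulting composite with the Hopf axioms. Recall that on a fourfold tensor power the (undeformed) quantum heap braiding sends $x\otimes y\otimes z\otimes w$ to $z^{(1)}\otimes w^{(1)}\otimes x S(z^{(2)}) w^{(2)}\otimes y S(z^{(3)}) w^{(3)}$, that is $c = (\mathbbm 1^{\otimes 2}\otimes T\otimes T)\circ \shuffle\circ(\mathbbm 1^{\otimes 2}\otimes \Delta_2^{\otimes 2})$ with $T(a\otimes b\otimes c)=aS(b)c$, while the Frobenius pairing is $\cup(a\otimes b)=\gamma(aS(b))$. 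First I would treat the identity $\cup\otimes\mathbbm 1^{\otimes 2}=(\mathbbm 1^{\otimes 2}\otimes\cup)\circ c$, which is the cleaner of the two.

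For that identity I would compute the right-hand side on $x\otimes y\otimes z\otimes w$: after applying $c$ and then pairing the last two outputs, the scalar becomes $\gamma\bigl(xS(z^{(2)})w^{(2)}\,S(yS(z^{(3)})w^{(3)})\bigr)$. Using that $S$ is an anti-homomorphism together with involutivity $S^2=\mathbbm 1$ rewrites the second factor as $S(w^{(3)})z^{(3)}S(y)$, so that the argument of $\gamma$ reads $xS(z^{(2)})\,w^{(2)}S(w^{(3)})\,z^{(3)}S(y)$. The antipode axiom $\mu(\mathbbm 1\otimes S)\Delta=\eta\epsilon$ collapses $w^{(2)}S(w^{(3)})$ to a unit and, via the counit, promotes the spectator $w^{(1)}$ back to $w$; the dual antipode axiom $\mu(S\otimes\mathbbm 1)\Delta=\eta\epsilon$ then collapses $S(z^{(2)})z^{(3)}$ and promotes $z^{(1)}$ to $z$. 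What remains is $\gamma(xS(y))\cdot z\otimes w$, which is exactly $\cup\otimes\mathbbm 1^{\otimes 2}$ evaluated on the same tensor. Notably this direction uses only the Hopf and antipode axioms, neither the (co)integral hypotheses nor cocommutativity.

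The reverse identity $(\cup\otimes\mathbbm 1^{\otimes 2})\circ c=\mathbbm 1^{\otimes 2}\otimes\cup$ is where the real work lies and where I expect the main obstacle. Here the pairing is applied to the two outgoing overstrands $z^{(1)},w^{(1)}$, producing the scalar $\gamma(z^{(1)}S(w^{(1)}))$ multiplying $xS(z^{(2)})w^{(2)}\otimes yS(z^{(3)})w^{(3)}$, and one must show this equals $\gamma(zS(w))\cdot x\otimes y$. The antipode axioms alone no longer suffice, because the surviving copies $z^{(2)},z^{(3)},w^{(2)},w^{(3)}$ are entangled across the two tensor slots and the scalar. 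The plan is to invoke the defining cointegral relation $(\gamma\otimes\mathbbm 1)\circ\Delta=\eta\gamma$, equivalently $\gamma(a^{(1)})a^{(2)}=\gamma(a)\mathbbm 1$, to absorb comultiplications against the $\gamma$-factor, using cocommutativity to first bring the relevant Sweedler components adjacent; a grouplike sanity check (take $z,w$ grouplike, so that $\gamma(z^{(1)}S(w^{(1)}))$ forces $z=w$ and thereby kills the surviving factor $S(z^{(2)})w^{(2)}$) already exhibits this collapsing mechanism. This step is delicate: it is the one place where cocommutativity of $X$ and the integral/cointegral hypotheses are genuinely needed, and it is closely tied to the Frobenius nondegeneracy identity relating $\cup$, $\cap$, the antipode and $S^2=\mathbbm 1$.

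Finally, the two statements for the copairing $\cap=\Delta\lambda$ are obtained by the categorical dual of the above argument, reading every diagram upside down and exchanging the roles of the cointegral $\gamma$ and the integral $\lambda$, of $\mu$ and $\Delta$, and of $\cup$ and $\cap$; no new ideas are required. Since the complete bookkeeping of Sweedler indices in the delicate direction was already carried out in \cite{SZ2}, I would present the clean direction in full and then appeal to \cite{SZ2} for the cointegral collapse, noting that the present hypotheses ($\lambda\gamma=\mathbbm 1$, $\lambda S\gamma=\mathbbm 1$, cocommutativity) are precisely those under which that computation is valid.
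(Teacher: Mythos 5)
Most of your proposal is sound, and for the delicate direction you lean on exactly the same source as the paper: the paper's entire proof of Lemma~\ref{lem:Frob} is the one-line remark that the diagrammatic proof of Lemma~4.6 in \cite{SZ2} applies here. Your explicit Sweedler verification of $\cup\otimes\mathbbm 1^{\otimes 2}=(\mathbbm 1^{\otimes 2}\otimes\cup)\circ c$ is correct (and the observation that this half needs only the antipode axioms and $S^2=\mathbbm 1$, with no (co)integral or cocommutativity input, is a genuine refinement of the statement), and deferring the cointegral-collapse direction to \cite{SZ2} is no less rigorous than what the paper does.

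The genuine gap is the final paragraph, where you claim the $\cap$ identities follow by ``reading every diagram upside down''. That flip does not preserve the data of the lemma: (i) it exchanges $\mu$ with $\Delta$, hence exchanges cocommutativity with commutativity, so the hypothesis you need is no longer available after flipping; (ii) it sends the heap braiding $c$, built from comultiplications feeding into $T=\mu\circ(\mu\otimes\mathbbm 1)\circ(\mathbbm 1\otimes S\otimes\mathbbm 1)$, to a ``co-heap'' morphism built from multiplications and the flipped $T$, which is neither $c$ nor $c^{-1}$; and (iii) the flip of $\cup=\gamma\mu(\mathbbm 1\otimes S)$ is $(\mathbbm 1\otimes S)\Delta\lambda$, not $\cap=\Delta\lambda$ --- already for $\mathbbm k[\Z_3]$ with $\lambda=e+a+a^2$ one has $(\mathbbm 1\otimes S)\Delta\lambda=e\otimes e+a\otimes a^2+a^2\otimes a\neq e\otimes e+a\otimes a+a^2\otimes a^2=\Delta\lambda$, so the antipode mismatch is real. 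Hence the flipped statements concern a different braiding and a different copairing, and ``no new ideas are required'' is unjustified. The repair is cheap and parallel to what you already did: for $c\circ(\cap\otimes\mathbbm 1^{\otimes 2})=\mathbbm 1^{\otimes 2}\otimes\cap$ compute $c(\lambda^{(1)}\otimes\lambda^{(2)}\otimes x\otimes y)=x^{(1)}\otimes y^{(1)}\otimes\lambda^{(1)}S(x^{(2)})y^{(2)}\otimes\lambda^{(2)}S(x^{(3)})y^{(3)}$, use that $\Delta$ is an algebra map together with cocommutativity to recognize the last two factors as $\Delta(\lambda b)$ with $b=S(x^{(2)})y^{(2)}$, and collapse with the integral axiom $\lambda b=\epsilon(b)\lambda$; the companion identity $c\circ(\mathbbm 1^{\otimes 2}\otimes\cap)=\cap\otimes\mathbbm 1^{\otimes 2}$ needs only cocommutativity and the antipode axiom. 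Alternatively, simply extend your citation: Lemma~4.6 of \cite{SZ2} covers the cap cases as well, which is all the paper itself does.
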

\begin{proof}
	The diagrammatic proof in \cite{SZ2}, Lemma~4.6, can be seen to apply in this case as well.  
\end{proof}

We can now define $\theta := (\Cup \otimes \mathbbm 1^{\otimes 2})\circ (\mathbbm 1^{\otimes 2}\otimes c)\circ (\Cap \otimes \mathbbm 1^{\otimes 2})$, where, by definition, we set $\Cup = \cup \circ (\mathbbm 1\otimes \cup \otimes \mathbbm 1)$, and a similar definition holds for $\Cap$. Consequently, it is easy to see (diagrammatically) that, applying Lemma~\ref{lem:Frob}, the braiding $c$ and $\theta$ commute, in the sense that $c\circ (\mathbbm 1\otimes \theta) = (\theta\otimes \mathbbm 1) \circ c$ and $c\circ  (\theta\otimes \mathbbm 1) = (\mathbbm 1\otimes \theta) \circ c$. In fact, although the proofs in \cite{SZ2} are for algebras in the category of modules, the diagrammatic proofs directly generalize to the case of Hopf-Frobenius algebras in more general symmetric monoidal categories, such as those of \cite{CD,BSZ}. Now, defining a category $\tilde R^*(X)$ as in Section~\ref{sec:ribboncat}, but replacing the twists with the new $\theta$ defined above and where the deforming $2$-cocycle is taken to be trivial, we have a proof analogous to that of Theorem~\ref{thm:ribbongeneral} implying that $\tilde R^*(X)$ is a ribbon category, since braiding and twist satisfy the same coherence properties of those of Theorem~\ref{thm:ribbongeneral}. 

Let now $\mathcal L$ be a framed link and let, in the same notation of Section~\ref{sec:ribbonquantum}, $b({\mathcal L}) = t_1^{k_1}\cdots t_n^{k_n} \tau$ be a framed braid whose closure is $\mathcal L$, where $\tau\in \mathbb B_n$. Decompose $\tau$ into a product of generators $\sigma_i$ of $\mathbb B_n$, namely $\tau = \sigma_{i_1}^{q_1} \cdots \sigma_{i_r}^{q_r}$ for some positive integers $r, q_1, \ldots q_r$ and $1\leq i_1, \ldots , i_r,\leq n-1$ for all $r$. We construct an endomorphism $\Phi_{\mathcal L}$ of $X^{\otimes 2n}$ assigning to each power of $\sigma_j$ appearing in the decomposition of $\tau$ into product of generators of $\mathbb B_n$, the morpshism $\mathbbm 1^{\otimes 2}\otimes \cdots \otimes  c_{2,2}^\alpha \otimes \cdots \otimes \mathbbm 1^{\otimes 2}$, where $c_{2,2}^\alpha$ is applied to the copies of $X^{\otimes 2}$ at positions $i$ and $i+1$. From the discussion in the previous paragraph it follows that $\Phi_{\mathcal L}$ so defined is an invariant of the link $\mathcal L$. Analogously, the quantum trace of $\Phi_{\mathcal L}$, $\Psi_{\mathcal L} := tr_q(\Phi_{\mathcal L})$ is an invariant of $\mathcal L$. 
\begin{remark}
	{\rm
Observe that such a construction of quantum invariants also applies to a Hopf-Frobenius structure where the twist is defined by means of pairing and copairing morphisms, upon exchanging $c^\alpha_{2,2}$ and $\theta_2^\alpha$ with $c$ and $\theta$ given above. 
}
\end{remark}
We conclude by briefly turning our attention to the categorical version of compatible TSD systems encountered in Section~\ref{sec:ribboncat}. In fact, due to a current lack of concrete examples, we have not explicitly mentioned such structures in Section~\ref{sec:generalized}. A possible source of examples, namely that of ternary augmented racks,  is discussed in the appendix below. The definitions of compatible systems translate the definitions given in Section~\ref{sec:ribboncat} in the setting of symmetric monoidal categories and allow to construct a braided monoidal category with certain base objects $\{X_i\}_{i\in I}$ generating the object family, and morpshims obtained inductively by combining all the possible compositions and tensoring of braidings $X_i\otimes X_i \otimes X_j\otimes X_j$. In order to obtain a quantum invariant of framed links we proceed as in the basic case given above, but replacing $\Phi_{\mathcal L}$ by a superposition of endomorphisms (hence we need to add the additional requirement of abelian monoidal category, or with $\mathbbm k$-linear structure), where the sum runs over all possible colorings $X_i\otimes X_i$ for each double string of the framed braid chosen to represent the link $\mathcal L$. 

\appendix

\section{Examples from ternary augmented racks}
It has been observed in Section~\ref{sec:ribboncat} that mutually distributive operations provide examples of compatible systems of self-distributive structures. The associated ribbon categories as in Theorem~\ref{thm:compatibleribboncat} have larger Hom sets than ribbon categories arising from a single ternary self-distributive structure, but consist of the same objects, i.e. a distinguished object $X^{\otimes 2}$ with all the tensor products arising from it. This class of examples still leaves open the question of whether there exist compatible systems with different base spaces, i.e. with $X_i \neq X_j$ for some $i$ and $j$. A possible answer comes from ternary augmented shelves \cite{ESZ}. We work in the category of vector spaces, but we observe that our example is obtained via linearization of a set-theoretic structure as given in Section~\ref{sec:ribboncat}. A related construction that is not obtained from linearized operations can be performed, in the setting of symmetric monoidal categories, by adapting the definitions in Section~\ref{sec:ribboncat} in a spirit similar to those given in Section\ref{sec:generalized}. We believe that this case is of interest, but unfortunately we are not aware of an example of such a ``non-linearized'' structure at this time. We have therefore decided to include it hereby as an appendix.

Suppose $X_1$ and $X_2$ are coaglebras and $H$ is a Hopf algebra that acts on them and, therefore, acts on $X_i^{\otimes 2}$ via comultiplication, for $i=1,2$. Suppose,  further, that there exist morphisms of coalgebras $p_i: X_i^{\otimes 2} \longrightarrow H$ satisfying 
$$
p_i(z\cdot \Delta(h)) = S(h^{(1)})p_i(z)h^{(2)},
$$
for every $z\in X_i$ and $h\in H$. Then the following result, extending Theorem B.6 in \cite{ESZ}, shows that in this situation a compatible system arises naturally.
\begin{theorem}\label{thm:compatiblehopf}
	Let $X_1$, $X_2$ and $H$ be as above, with related morphisms $p_1$ and $p_2$. Then, defining maps
	$$
	T_{ij} : X_i\otimes X_j\otimes X_j \longrightarrow X_i
	$$
	by extending the assignment $x\otimes y_1\otimes y_2 \mapsto x\cdot p_j(y_1\otimes y_2)$ by linearity, it follows that $\{X_i,T_{ij}\}_{i,j=1,2}$ is a compatible system of ternary self-distributive structures. A similar result holds for a finite number of $H$ modules $X_i$, $i=1,2\ldots , n$ with morphisms $p_i$. 
\end{theorem}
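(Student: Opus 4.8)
The plan is to verify the compatibility condition of Definition~\ref{def:compatiblestructures} directly for the operations $T_{ij}(x\otimes y_1\otimes y_2) = x\cdot p_j(y_1\otimes y_2)$, adapting to the multi-object setting the single-object argument of Theorem~B.6 in \cite{ESZ} (the case $X_1=X_2$, $p_1=p_2$). The three structural ingredients are the associativity of the $H$-action on each $X_i$, so that $(x\cdot g)\cdot g' = x\cdot(gg')$; the fact that each $p_i\colon X_i^{\otimes 2}\longrightarrow H$ is a morphism of coalgebras, so that $\Delta_H\circ p_i = (p_i\otimes p_i)\circ\Delta_{X_i^{\otimes 2}}$; and the equivariance relation $p_i(z\cdot\Delta(h)) = S(h^{(1)})\,p_i(z)\,h^{(2)}$. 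First I would record that each $T_{ij}$ indeed lands in $X_i$ and is well defined by linearity, and observe that the instance $i=j=k$ of the compatibility relation is precisely self-distributivity of $T_{ii}$; hence establishing the general compatibility condition simultaneously shows that every $(X_i,T_{ii})$ is a TSD set.

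The core step compares the two sides of
$$T_{ik}\circ(T_{ij}\times\mathbbm 1\times\mathbbm 1) = T_{ij}\circ(T_{ik}\times T_{jk}\times T_{jk})\circ\shuffle\circ(\mathbbm 1^{\times 3}\times\Delta_3\times\Delta_3)$$
on a simple tensor $x\otimes y\otimes z\otimes u\otimes v$ with $x\in X_i$, $y,z\in X_j$, $u,v\in X_k$. Using associativity of the action, the left-hand side evaluates to $x\cdot\big(p_j(y\otimes z)\,p_k(u\otimes v)\big)$. On the right-hand side the maps $\Delta_3$ produce three copies of $u$ and of $v$, which after the shuffle feed the inner operations $T_{ik}(x,u^{(1)},v^{(1)})$, $T_{jk}(y,u^{(2)},v^{(2)})$, $T_{jk}(z,u^{(3)},v^{(3)})$; applying the outer $T_{ij}$ then produces
$$\big(x\cdot p_k(u^{(1)}\otimes v^{(1)})\big)\cdot p_j\!\Big(\big(y\cdot p_k(u^{(2)}\otimes v^{(2)})\big)\otimes\big(z\cdot p_k(u^{(3)}\otimes v^{(3)})\big)\Big).$$

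I would then transform the right-hand side in three moves. By coassociativity I regroup the copies so that the argument of the inner $p_j$ is $(y\otimes z)$ acted on by $\Delta(g)$ with $g=p_k(u_{(2)}\otimes v_{(2)})$; here the coalgebra-morphism property of $p_k$ identifies $p_k(u^{(2)}\otimes v^{(2)})\otimes p_k(u^{(3)}\otimes v^{(3)})$ with $\Delta_H(g)$ (the tensor-coalgebra comultiplication on $X_k^{\otimes 2}$ supplying the middle transposition absorbed into $\shuffle$). Next, the equivariance relation rewrites $p_j\big((y\otimes z)\cdot\Delta(g)\big)$ as $S(g^{(1)})\,p_j(y\otimes z)\,g^{(2)}$. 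Setting $c:=p_k(u\otimes v)$ and applying the coalgebra-morphism property once more to view the three $p_k$-factors as $c^{(1)},c^{(2)},c^{(3)}$, the element of $H$ acting on $x$ becomes $c^{(1)}S(c^{(2)})\,p_j(y\otimes z)\,c^{(3)}$. Finally the antipode and counit axioms yield the identity $c^{(1)}S(c^{(2)})\otimes c^{(3)} = 1\otimes c$, which collapses this element to $p_j(y\otimes z)\,c = p_j(y\otimes z)\,p_k(u\otimes v)$, so that the right-hand side equals $x\cdot\big(p_j(y\otimes z)\,p_k(u\otimes v)\big)$ and matches the left-hand side.

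I expect the main obstacle to be the bookkeeping of the repeated entries $u$ and $v$, which occur three times on the right and once on the left: the whole verification hinges on reorganizing the iterated comultiplications (by coassociativity, with the symmetric braiding packaged into $\shuffle$) so that the coalgebra-morphism property of $p_k$ and the equivariance of $p_j$ can be applied in the right order before the antipode axiom collapses the expression. To keep this manageable I would carry out the computation diagrammatically, adapting the string-diagram proof of Theorem~B.6 in \cite{ESZ}, rather than chasing Sweedler indices by hand. The extension to a finite family $X_1,\dots,X_n$ is then immediate, since the argument never involves more than the three indices $i,j,k$ and so applies verbatim with indices ranging over $\{1,\dots,n\}$.
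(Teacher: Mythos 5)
Your proposal is correct and follows essentially the same route as the paper's proof: both expand the nested side of the compatibility condition on simple tensors, use the coalgebra-morphism property of $p_k$ to repackage the three $p_k$-factors as iterated comultiplication of $c = p_k(u\otimes v)$ in $H$, apply the equivariance relation for $p_j$, and then collapse $c^{(1)}S(c^{(2)})\otimes c^{(3)} = 1\otimes c$ via the antipode and counit axioms before invoking associativity of the $H$-action. Your additional remarks (that the case $i=j=k$ recovers self-distributivity of $T_{ii}$, and that the argument extends verbatim to $n$ objects since only three indices ever appear) are accurate and consistent with the paper.
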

\begin{proof}
	The proof is analogous to that of Theorem B.6 in \cite{ESZ} and it is included here to show that having two morphisms $p_1$ and $p_2$ does not affect the procedure. It is enough to prove compatibility condition on simple tensors $x\otimes y_1\otimes y_2\otimes z_1\otimes z_2\in X_i\otimes X_j\otimes X_j\otimes X_k\otimes X_k$, where $i,j,k = 1,2$, as follows
	\begin{eqnarray*}
		\lefteqn{T_{ij}(T_{ik}(x\otimes z_1^{(1)}\otimes z_2^{(1)})\otimes T_{jk}(y_1\otimes z_1^{(2)}\otimes z_2^{(2)}\otimes T_{jk}(y_2\otimes z_1^{(3)}\otimes z_2^{(3)}))} \\
		&=& (x\cdot p_k(z_1^{(1)}\otimes z_2^{(1)}))\cdot p_i(y_1\cdot p_k(z_1^{(2)}\otimes z_2^{(2)})\otimes y_2\cdot p_k(z_1^{(3)}\otimes z_2^{(3)}))\\
		&=& x\cdot [p_k(z_1^{(1)}\otimes z_2^{(1)})\cdot p_j((y_1\otimes y_2)\cdot\Delta (p_k(z_1\otimes z_2)^{(2)})]\\
		&=& x\cdot [p_k(z_1\otimes z_2)^{(1)}\cdot S(p_k(z_1\otimes z_2)^{(2)})\cdot p_j(y_1\otimes y_2)\cdot p_k(z_1\otimes z_2)^{(3)}]\\
		&=& x\cdot [\epsilon(p_k(z_1\otimes z_2)^{(1)})1\cdot p_j(y_1\otimes y_2)\cdot p_k(z_1\otimes z_2)^{2}]\\
		&=& x\cdot [p_j(y_1\otimes y_2)\cdot p_k(z_1\otimes z_2)]\\
		&=& (x\cdot p_j(y_1\otimes y_2))\cdot p_k(z_1\otimes z_2)\\
		&=& T_{ik}(T_{ij}(x\otimes y_1\otimes y_2)\otimes z_1\otimes z_2),
	\end{eqnarray*}
	where in the second equality has been used the fact that $p_k$ is a morphism of coalgebras, the third equality is obtained applying the defining property of  $p_j$ and, finally, antipode axiom, counit axiom, associativity of the action of $H$ and definition of maps $T_{ij}$ are used in the last  four equalities. Coassociativity and associativity in $H$ are used throughout, without explicit mention. 
\end{proof}

\begin{example}\label{ex:compsyst}
	{\rm 
		Let $G_n$ denote the cyclic group of order $n$ in multiplicative notation, therefore an element of $G_n$ is denoted by the symbol $x^k$ for some $k$ determined by reduction modulo $n$. Let $m_1$ and $m_2$ be positive integers with $(m_1,m_2) = 1$ and define $G_{nm_1}$, $G_{nm_2}$ as above, with generators denoted by the symbols $y_1$ and $y_2$ respectively. Then the group algebra $H:= \mathbbm{k} [G_n]$ acts on $X_i:= \mathbbm{k} [G_{nm_i}]$ via the map $x^k \mapsto y_i^{m_ik}$ and multiplication in $G_{nm_i}$. Define maps $p_i : X_i\otimes X_i\longrightarrow H$ extending by linearity the assignment 
		$$
		y_i^{k_1}\otimes y_i^{k_2} \mapsto x^{k_2-k_1}.
		$$
		Then
		\begin{eqnarray*}
			p_i(y_i^{k_1}\otimes y_i^{k_2}\cdot \Delta(x^k)) &=&  	p_i(y_i^{nk+k_1}\otimes y_i^{nk+k_2})\\
			&=& x^{m_ik+k_2-m_ik-k_1} \\
			&=& x^{-k} x^{k_2-k_1}x^{k}\\
			&=& S(x^{k}) p_i(y_i^{k_1}\otimes y_i^{k_2}) x^k.
		\end{eqnarray*}
		Moreover, since $(m_1,m_2) = 1$, it follows that $X_1$ and $X_2$ are not subrepresentations of each others, so that the compatible system arising from Theorem~\ref{thm:compatiblehopf} consists of two base objects that are independent as representations of $H$. 
	}
\end{example}

\begin{example}
	{\rm 
		In this example it is shown that the compatible system given in Example~\ref{ex:compsyst} admits nontrivial compatible systems of $2$-cocycles. With same notation as before, fix $n = 2$, $m_1 = 2$ and $m_2 = 3$. The generator of $G_2$ is denoted by $x$, the ones of $G_4$ and $G_6$ by $y$ and $z$ respectively. The compatible structure on $X_1 := \mathbbm k [G_4]$, $X_2 := \mathbbm k[G_6]$ is given, on simple tensors, by 
		\begin{eqnarray*}
			T_{12} (y^t \otimes z^{s_1}\otimes z^{s_2}) &=& y^{t -2s_1 + 2s_2}\\
			T_{21}(z^s\otimes y^{t_1}\otimes y^{t_2}) &=& z^{s-3t_1+3t_2},
		\end{eqnarray*}
		while $T_{11}$ and $T_{22}$ are determined by linearizing the heap structure on groups $G_1$ and $G_2$, respectively. Define now maps $\alpha_{ij}: X_i\times X_j\times X_j \longrightarrow \Z$, where $\Z$ is given multiplicative structure with generator $w$, determined by 
		$$
		\alpha_{ij}(b_i^{k_i}\times b_j^{s_1}\times b_j^{s_2}) = \begin{cases}
		w \ \ {\rm if}\ \ s_1 = s_2\\
		1 \ \ {\rm otherwise} 
		\end{cases}
		$$
		where $b_1 = y$ and $b_2 = z$. The system so defined is seen to satisfy the $2$-cocycle condition of Section~\ref{sec:ribboncat} by direct inspection. In fact since $(y^k, y^t, y^t)$ and $(z^r, z^s, z^s)$ are $2$-cycles of $G_4$ and $G_6$, respectively, with ternary self-distributive differentials, applying Remark~\ref{rmk:nontrivialcocy} it follows that the system of $2$-cocycles $\{\alpha_{ij}\}$ is nontrivial. 
	}
\end{example}

	\end{document}